\documentclass[5pt]{article}
\usepackage{amsmath}
\usepackage[latin1]{inputenc}
\usepackage{amsmath,amsthm,amssymb}
\usepackage{graphicx}
\usepackage[all,poly,knot]{xy}

\usepackage[all]{xy}

\usepackage{listings} 
\usepackage{tikz-cd} 
\usepackage{amssymb} 
\usepackage{booktabs}
\usepackage{mathtools}
\usepackage{color}

\renewcommand{\theequation}{\thesection.\arabic{equation}}

\newtheorem{defn}{Definition}[section]
\newtheorem{lem}{Lemma}[section]
\newtheorem{thm}{Theorem} [section]

\newtheorem{exmp}{Example} [section]
\newtheorem{coro}{Corollary}[section]
\newtheorem{rem}{Remark}[section]
\newtheorem{conj}{Conjecture}[section]
\newtheorem{prob}{Problem}[section]

\title{Cyclic Codes and Cyclically Covering Subspaces over Finite Fields}

\author{Yangcheng Li$^{1,}$\footnote{E-mail\,$:$ liyc@m.scnu.edu.cn.} \,\, Pingzhi Yuan$^{1,}$\footnote{Corresponding author. E-mail\,$:$ yuanpz@scnu.edu.cn. Supported by NSF of China No.12571003, 12501006; Basic and Applied Basic Research Foundation of Guangdong Province No. 2024A1515010589.}  \\
	{\small\it  $^{1}$School of Mathematical Sciences, South China Normal University,}\\
	{\small\it Guangzhou 510631, Guangdong, P. R. China} \\
}

\date{}
\begin{document}
\baselineskip15pt \maketitle
\renewcommand{\theequation}{\arabic{section}.\arabic{equation}}
\catcode`@=11 \@addtoreset{equation}{section} \catcode`@=12

\begin{abstract}
Let \(q\) be a power of a prime \(p\), and let \(n\) be a positive integer. A subspace \(U\subseteq \mathbb F_q^n\) is called cyclically covering if the union of all its cyclic shifts covers \(\mathbb F_q^n\), and \(h_q(n)\) denotes the maximum possible codimension of such a subspace. This paper studies cyclically covering subspaces via cyclic codes. We first prove that \(h_q(n)=0\) if and only if every nonzero cyclic code in \(\mathbb F_q^n\) contains a full-weight codeword. We also relate \(h_q(n)\) to the maximum weights of cyclic codes. In particular, when \(h_q(n)>0\), we obtain sharp bounds for the maximum weight of cyclic codes without full-weight codewords and provide explicit examples attaining these bounds. Moreover, we study the number of cyclic codes containing no full-weight codeword. We determine this number completely over \(\mathbb F_2\), and give lower bounds over \(\mathbb F_3\). From this, we prove that if \(q\ge 3\) is an odd prime and \(m\ge 4\) is an integer, then \(h_q\left(\frac{q^m+1}{2}\right)>0\).
\end{abstract}

{\bf Keywords:} Cyclically covering subspaces, Cyclic codes, Full-weight codewords, Maximum weight.

\section{Introduction}
A finite field, denoted as $\mathbb{F}_{q}$, where $q$ is a prime power, is a field that contains a finite number of elements. For $n\in\mathbb{N}$, let $\{\boldsymbol{e}_0, \boldsymbol{e}_1, \dots, \boldsymbol{e}_{n-1}\}$ be the standard basis for $\mathbb{F}^n_q$, the indices of vectors in $\mathbb{F}^n_q$ will be taken modulo $n$ (in particular, we set $\boldsymbol{e}_n = \boldsymbol{e}_0$). Define the cyclic shift operator $\tau : \mathbb{F}^n_q\to \mathbb{F}^n_q$ by
\[\tau: \sum_{i=0}^{n-1}a_i\boldsymbol{e}_i \mapsto \sum_{i=0}^{n-1}a_i\boldsymbol{e}_{i+1}.\]
We say that a subspace $U \subset \mathbb{F}^n_q$ is cyclically covering if $\bigcup_{i=0}^{n-1}\tau^i(U) =\mathbb{F}^n_q$. For any $n\in\mathbb{N}$, let $h_q(n)$ denote the largest possible codimension of a cyclically covering subspace of $\mathbb{F}^n_q$.

The determination of \(h_q(n)\) is a difficult problem in general. Only a limited number of exact values are known, and most available results concern special families of integers \(n\) or general upper and lower bounds. Motivated by Isbell's conjecture \cite{Isbell56, Isbell60}, Cameron, Ellis, and Raynaud \cite{Cameron-Ellis-Raynaud} in 2019 initiated a systematic study of this invariant and obtained several fundamental properties of \(h_q(n)\). We recall their main results as follows.
\begin{lem}\cite{Cameron-Ellis-Raynaud} \label{lem1.1} Let \(q\) be a power of prime \(p\), and \(n,m,d,k \in \mathbb{N}\), then the following hold.

$(\mathrm{i})$ $h_2(n) \ge 2$ for all odd integers $n > 3$.

$(\mathrm{ii})$ $h_q(nm) \geq \max\{h_q(n), h_q(m)\}.$

$(\mathrm{iii})$ \(h_q(n) \leq \lfloor\log_q(n)\rfloor\).

$(\mathrm{iv})$ $h_q(q^d - 1) = d - 1 = \lfloor\log_q(q^d - 1)\rfloor.$

$(\mathrm{v})$ \(h_q(M/c) \geq kd + k - c\frac{q^k - 1}{q - 1}\), where \(M = (q - 1)\left(\sum_{r=0}^d q^{kr}\right)\), and \(M\) has a divisor \(c \in \mathbb{N}\) such that \(c < (q - 1)\frac{q^{kd} - q^{-kd}}{q^k - 1}\).

$(\mathrm{vi})$ \(h_q\left(\sum_{r=0}^d q^{kr}\right) = kd\), if \(\gcd(d + 1, q^k - 1) = 1\).

$(\mathrm{vii})$ $h_q(kp^d) = 0$ if \(k \mid q - 1\).

$(\mathrm{viii})$ \(h_2(n) = 0\) if and only if \(n = 2^d\) for some \(d \in \mathbb{N} \cup \{0\}\), and \(h_2(n) = 1\) if and only if \(n = 3\).
\end{lem}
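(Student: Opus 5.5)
The plan is to turn the covering condition into a statement about linear functionals and shift-equivariant maps. A subspace \(U\) of codimension \(k\) is the common kernel of \(k\) functionals, and \(U\) is cyclically covering exactly when every \(x\) has a shift \(\tau^i x\) killed by all of them. Several parts then reduce to identifying \(\mathbb F_q^n\) with a field or a group ring on which \(\tau\) acts as multiplication.

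\emph{Parts (ii), (iii), (iv), (vi).}
\begin{itemize}
\item For (ii), consider the folding map \(\varphi:\mathbb F_q^{nm}\to\mathbb F_q^n\) that sums coordinates in the same class modulo \(n\). It is surjective and satisfies \(\varphi\tau=\tau\varphi\). If \(U\subseteq \mathbb F_q^n\) is cyclically covering, then so is \(\varphi^{-1}(U)\), and it has the same codimension. By symmetry the same holds with \(m\) in place of \(n\).
\item For (iii), count vectors. Each \(\tau^i(U)\) contains \(0\), so covering forces \(n(q^{n-k}-1)+1\ge q^n\). Hence \(q^k<n+1\), which gives \(k\le\lfloor\log_q n\rfloor\).
\item For (iv), put \(n=q^d-1\), let \(\alpha\) be a primitive element of \(\mathbb F_{q^d}\), and define \(L(x)=\sum_i x_i\alpha^i\). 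Then \(L\) is \(\mathbb F_q\)-linear and surjective, and \(L(\tau x)=\alpha L(x)\). Take \(U=L^{-1}(\mathbb F_q)\), which has codimension \(d-1\). If \(L(x)=\alpha^j\neq0\), then \(L(\tau^{-j}x)=1\in\mathbb F_q\); if \(L(x)=0\) then \(x\in U\) already. The matching upper bound is (iii).
\item For (vi), put \(n=\frac{q^{k(d+1)}-1}{q^k-1}\) and let \(\gamma=\alpha^{q^k-1}\), which has order \(n\). Use \(L(x)=\sum x_i\gamma^i\) and \(U=L^{-1}(\mathbb F_{q^k})\), of codimension at most \(kd\).
  \begin{itemize}
  \item Since \(n\equiv d+1\pmod{q^k-1}\), the hypothesis gives \(\gcd(n,q^k-1)=1\).
  \item Therefore \(\mathbb F_{q^{k(d+1)}}^*=\langle\gamma\rangle\times\mathbb F_{q^k}^*\). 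Every nonzero \(L(x)\) is \(\gamma^j c\) with \(c\in\mathbb F_{q^k}^*\), so \(\tau^{-j}x\in U\).
  \item The upper bound \(kd\) is again (iii).
  \end{itemize}
\item Part (v) should follow the same template with a subgroup of index \(c\), using a counting correction to handle non-surjectivity. I would treat it last.
\end{itemize}

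\emph{Part (vii).} Suppose \(U\) is contained in the kernel of a nonzero functional \(f\). It suffices to find \(x\) such that \(f(\tau^i x)\neq0\) for all \(i\).
\begin{itemize}
\item The sequences \((f(\tau^i x))_i\), as \(x\) varies, form a nonzero cyclic code \(C\) that is \(\tau\)-invariant.
\item Eigenvalues of \(\tau\) satisfy \(\lambda^n=1\). With \(n=kp^d\), this means \((\lambda^k)^{p^d}=1\), so \(\lambda^k=1\) and \(\lambda\in\mathbb F_q\), because \(k\mid q-1\).
\item Hence \(\tau|_C\) has an eigenvector \(c\) with \(c_{i+1}=\lambda c_i\). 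All its coordinates are nonzero, so \(c\) is a full-weight codeword, as required.
\end{itemize}

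\emph{Parts (i) and (viii).} For (viii), the direction ``\(n=2^d\Rightarrow h_2(n)=0\)'' is (vii) with \(q=2\) and \(k=1\). Conversely, if \(n\) has an odd divisor \(m>1\), then (ii) together with \(h_2(3)=1\) (from (iv)) and (i) gives \(h_2(n)\ge1\), and \(h_2(n)\ge 2\) unless \(m=3\) is the only odd divisor larger than 1. So it remains to show \(h_2(3\cdot 2^a)\ge2\) for \(a\ge1\), which requires a direct construction for \(n=6\) (or \(n=12\)) combined with (ii).

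For (i), by (ii) it suffices to treat \(n=9\) and primes \(n\ge5\). Here I would search for two functionals, for instance windows such as ``\(x_i=0\) and \(x_{i+1}=x_{i+2}\)'' or unions of such patterns, whose common kernel meets every shift orbit.

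I expect this explicit pattern construction for \(q=2\) to be the main obstacle. Naive window choices fail; for example the periodic word \(001001001\) defeats the first candidate. My first attempt would be to use the field trick of (iv) over \(\mathbb F_{2^t}\), where \(t\) is the order of \(2\) modulo \(n\), taking \(U=L^{-1}(W)\) for an \(\mathbb F_2\)-subspace \(W\) whose \(\langle\gamma\rangle\)-translates cover \(\mathbb F_{2^t}\).
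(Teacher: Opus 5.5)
The paper gives no proof of this lemma. Apart from the first half of (viii), which it re-derives as Corollary~\ref{coro2.3}, the lemma is quoted from Cameron--Ellis--Raynaud, so your proposal has to stand on its own.

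\textbf{What is correct.} Your arguments for (ii), (iii), (iv) and (vii) are correct as written. The eigenvector argument for (vii) is a clean instance of the criterion in Theorem~\ref{lem1}. Part (vi) needs one repair. For a lower bound you need \(\operatorname{codim}L^{-1}(\mathbb F_{q^k})\ge kd\), not ``at most \(kd\)''. This holds because \(L\) is onto. Its image is \(\mathbb F_q[\gamma]\), and \(\gamma\) has order \(n>q^{kd}\). When \(d\ge 1\), every proper subfield of \(\mathbb F_{q^{k(d+1)}}\) has at most \(q^{kd}\) elements, so \(\gamma\) lies in none of them.

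\textbf{The gaps.} The genuine gaps are (i) and (v), and with them the second half of (viii).
\begin{itemize}
\item For (v) you give no argument at all. You would need \(\gamma\) of order \(n=M/c\). You would need a subspace \(W\) spanned by a transversal of \(\langle\gamma\rangle\); its index in \(\mathbb F_{q^{k(d+1)}}^*\) is \(c(q^k-1)/(q-1)\), not \(c\). And you would need a codimension bound for \(L^{-1}(W)\) that still holds when \(\gamma\) generates only a subfield \(\mathbb F_{q^e}\). One way to get it is that \(t\mapsto t-(q^t-1)/n\) is decreasing once \(q^t>n\).
\item For (i) you have only a search plan.
\item The second half of (viii) depends on (i), and also on the base case \(h_2(6)\ge 2\), which you leave open.
\end{itemize}

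\textbf{Missing idea for (i).} Make the two conditions symmetric about a centre instead of using a one-sided window. For odd \(n\ge 5\), take \(U=\{x: x_1=x_{n-1},\ x_2=x_{n-2}\}\), which has codimension \(2\). Then \(x\in\tau^i(U)\) if and only if \(x_{i-1}=x_{i+1}\) and \(x_{i-2}=x_{i+2}\). Set \(a_j=x_{j-1}+x_{j+1}\). Since \(x_{i-2}+x_{i+2}=a_{i-1}+a_{i+1}\), you need an \(i\) with \(a_i=0\) and \(a_{i-1}=a_{i+1}\). Suppose no such \(i\) exists. Then every zero of \(a\) has exactly one zero neighbour on the \(n\)-cycle, so \(a\) has an even number of zeros. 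Since \(n\) is odd, \(a\) then has odd weight. This contradicts \(\sum_j a_j=2\sum_j x_j=0\). The argument settles all odd \(n>3\) at once, with no reduction to primes and \(9\).

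\textbf{Completing (viii).} The case \(h_2(6)\ge 2\) is a finite check. The subspace \(U=\{x\in\mathbb F_2^6: x_1=x_3,\ x_2+x_3+x_4+x_5=0\}\) is spanned by \(111111,010101,001001,100000\), and it meets all \(13\) nonzero rotation classes. Also, the direction \(h_2(n)=0\Rightarrow n=2^d\) does not need (i). For odd \(m\ge 3\) the hyperplane \(\{x_0=x_1\}\) is already cyclically covering, since an odd cycle cannot alternate; then apply (ii). Alternatively, argue as the paper does via Theorem~\ref{lem1} and Theorem~\ref{thm3.3}.
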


In 1991, Cameron (see \cite{Cameron} Problem 190) posed the following problem in an equivalent form:
\begin{prob}\cite{Cameron}\label{prob1.1}
Does $h_2(n)\to\infty$ as $n\to\infty$ over the odd integers or is $h_2(n) =2$ for infinitely many odd $n$?
\end{prob}
Motivated by Problem \ref{prob1.1}, in 2019, Aaronson, Groenland and Johnston \cite{Aaronson-Groenland-Johnston} investigated the cyclically covering subspaces of \(\mathbb{F}_2^n\). Their main conclusions are summarized as follows.
\begin{lem}\cite{Aaronson-Groenland-Johnston} \label{lem1.2} Let \(q\) be a power of prime \(p\), and \(n,m,\ell,d \in \mathbb{N}\), then the following hold.

$(\mathrm{i})$ \(h_q(mn) \geq h_q(m) + h_q(n)\).

$(\mathrm{ii})$ \(h_q(pn) \leq ph_q(n)\).

$(\mathrm{iii})$ \(h_q(\ell p^d) = 0\) for any \(\ell < q\).

$(\mathrm{iv})$ \(h_2(t) = 2\) if \(t > 3\) is a prime for which \(2\) is a primitive root.

$(\mathrm{v})$ \(h_q(t) = 0\), where \(q\) is an odd prime, and \(t > q\) is a prime with \(q\) as a primitive root.
\end{lem}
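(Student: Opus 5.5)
The five parts are largely independent, so I would prove them one at a time. Parts $(\mathrm{i})$ and $(\mathrm{ii})$ are structural statements about shifts. Parts $(\mathrm{iii})$–$(\mathrm{v})$ are best handled through the dictionary between hyperplane-type obstructions and codewords. A subspace $U$ fails to be cyclically covering exactly when some $x$ has every $\tau^i x\notin U$. For $U=\ker f$ with $f$ a nonzero functional given by $\langle c,\cdot\rangle$, this says that the cyclic correlation $(\langle c,\tau^i x\rangle)_i$ has no zero entry. That correlation vector is a codeword, namely a product $c^*(X)x(X)$ in $\mathbb F_q[X]/(X^n-1)$, of the cyclic code generated by the reciprocal $c^*$ of $c$. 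Hence $h_q(n)=0$ iff every nonzero cyclic code of length $n$ has a full-weight codeword.

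\textbf{Part $(\mathrm{i})$.} Take cyclically covering $U\subseteq\mathbb F_q^m$ and $V\subseteq\mathbb F_q^n$ of codimensions $h_q(m)$ and $h_q(n)$. Let $\varphi:\mathbb F_q^{mn}\to\mathbb F_q^m$ fold coordinates mod $m$; it is $\tau$-equivariant, and $\tau^m$ acts trivially on its image. Let $\psi$ be an equivariant map to $\mathbb F_q^n$ on which $\tau^m$ acts as the shift, for instance sampling the positions $\equiv 0 \pmod m$. Set $W=\varphi^{-1}(U)\cap\psi^{-1}(V)$. Given $x$, choose $i$ with $\varphi(\tau^i x)\in U$; every shift $i+mk$ preserves this, and $k$ can then be chosen so that $\psi(\tau^{i+mk}x)\in V$. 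The main obstacle is that $\varphi$ and $\psi$ as chosen are not jointly surjective: their kernels' codimensions add only up to $m+n-1$. I would repair this by replacing $\psi$ with a version taking values in a $\tau^m$-stable complement compatible with $\ker\varphi$, for example differences of consecutive residue classes, and then checking that $\operatorname{codim}W=h_q(m)+h_q(n)$.

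\textbf{Part $(\mathrm{ii})$.} Use $X^{pn}-1=(X^n-1)^p$ in characteristic $p$. The ring $R=\mathbb F_q[X]/(X^{pn}-1)$ has the filtration by ideals $(X^n-1)^jR$, for $j=0,\dots,p$. Each layer is isomorphic to $\mathbb F_q[X]/(X^n-1)$ as a module for the shift. Given a cyclically covering $W$ of codimension $k$, I would show by induction along the filtration that $k$ splits as $k_1+\dots+k_p$. Each $k_j$ comes from the image of $W$ in the $j$-th layer, which inherits a covering property, so $k_j\le h_q(n)$.

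\textbf{Parts $(\mathrm{iii})$–$(\mathrm{v})$.} For $(\mathrm{iii})$, by the dictionary it suffices to show that every nonzero $c\in\mathbb F_q[X]/(X^{\ell p^d}-1)$ has a multiple with all coefficients nonzero. I would write the target as a polynomial condition $\prod_i (c*x)_i\ne 0$. Since $\ell<q$, a degree count lets Alon's Combinatorial Nullstellensatz produce a nonvanishing point. The reduction to $d=0$ uses $X^{\ell p^d}-1=(X^\ell-1)^{p^d}$, and I expect this degree bookkeeping to be the delicate step. For $(\mathrm{iv})$ and $(\mathrm{v})$, the primitive-root hypothesis makes $X^t-1=(X-1)\Phi_t(X)$ with $\Phi_t$ irreducible. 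So the only nonzero cyclic codes are the repetition code, the sum-zero code, and the whole space.
\begin{itemize}
\item For $(\mathrm{v})$ with $q$ odd and $t>q$, the sum-zero code contains a full-weight word because $q\ge3$. Hence $h_q(t)=0$.
\item For $(\mathrm{iv})$, $h_2(t)\ge 2$ by Lemma \ref{lem1.1}$(\mathrm{i})$. For the upper bound, I would show that any codimension-$3$ subspace fails to cover. Via a counting argument over the very restricted lattice of invariant subspaces, its annihilator, of dimension $3$, must meet the structure in a way that forces an uncovered vector.
\end{itemize}
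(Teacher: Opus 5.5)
\textbf{Context.} The paper does not prove this lemma; it quotes it from Aaronson--Groenland--Johnston. It reproves only (ii), as Theorem~\ref{thm4.1}, and the $p$-power reduction behind (iii), as Theorem~\ref{lem2}.

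\textbf{Parts (ii) and (v) are fine.} Your (ii) is a correct primal version of Theorem~\ref{thm4.1}. Use the images of $W\cap(X^n-1)^jR$ in the layers: they are covering because each $(X^n-1)^jR$ is shift-invariant. This means you do not even need the identity $(X^n-1)^{p-1}=\sum_{r<p}X^{rn}$ that the paper's dual argument relies on. Your (v) is complete; it is the sum-zero argument of Corollary~\ref{coro3.3}.

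\textbf{The genuine gap is the upper bound $h_2(t)\le 2$ in (iv), for which you give no argument.} The lattice $\{0,\langle\mathbf 1\rangle,E,\mathbb F_2^t\}$, with $E$ the even-weight code, only shows that every nonzero vector of the $3$-dimensional annihilator must itself define a covering hyperplane. By Theorem~\ref{lem1} and Corollary~\ref{coro2.4}, this just says the annihilator lies in $E$. Identify $E\cong\mathbb F_2[X]/(\Phi_t)\cong\mathbb F_{2^{t-1}}$, on which the shift acts as multiplication by an element $\omega$ of order $t$. What remains is to show that no $\mathbb F_2$-subspace of codimension $3$ in $\mathbb F_{2^{t-1}}$ meets every coset of $\langle\omega\rangle$ in $\mathbb F_{2^{t-1}}^{*}$. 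Counting cannot do this: the $t$ shifts of a codimension-$3$ subspace of $\mathbb F_2^t$ have total size $t\,2^{t-3}\ge 2^t$ once $t\ge 8$. So for $t=11,13,19,\dots$ a structural argument is required. That is the real content of the Aaronson--Groenland--Johnston theorem, and your sentence about the annihilator ``meeting the structure'' does not supply it.

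\textbf{Part (i): the repair you propose cannot exist.} Take any $\tau^m$-equivariant surjection $\psi$ onto $\mathbb F_q^n$, with $\tau^m$ acting as the shift. The functional $x\mapsto\sum_k\psi(x)_k$ is $\tau^m$-invariant, so it vanishes on $\ker\varphi=(\tau^m-1)\mathbb F_q^{mn}$ and factors through $\varphi$. Hence $(\varphi,\psi)$ is never jointly surjective. Joint surjectivity is not needed, though. With folding and sampling, $\operatorname{codim}W=h_q(m)+h_q(n)$ as soon as two families meet only in $0$: the $m$-periodic extensions of vectors of $U^\perp$, and the copies of vectors of $V^\perp$ placed on the positions $\equiv 0\pmod m$. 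A common nonzero vector would force $e_0\in U^\perp$ and $\mathbf 1\in V^\perp$. The latter is impossible, since then every shift of $V$ would lie in the proper shift-invariant hyperplane $\mathbf 1^\perp$.

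\textbf{Part (iii): the reduction to $d=0$ is missing.} The case $d=0$ is fine, since a product of $\ell\le q-1$ nonzero linear forms is a nonzero polynomial of degree less than $q$ in each variable. But once $\ell p^d\ge q$, no degree count applies directly. The missing idea is the repetition identity $(X^\ell-1)^{p^d-1}=\sum_{r<p^d}X^{r\ell}$ from the proof of Theorem~\ref{lem2}. For a nonzero ideal $\langle g\rangle\subseteq R_{\ell p^d}$, the polynomial $g'=g/\gcd\bigl(g,(X^\ell-1)^{p^d-1}\bigr)$ is a proper divisor of $X^\ell-1$. Any full-weight $c\in\langle g'\rangle\subseteq R_\ell$ then yields the full-weight word $(X^\ell-1)^{p^d-1}c\in\langle g\rangle$, namely the $p^d$-fold repetition of $c$. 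Stated this way, the argument also covers $p\mid\ell$, which can occur when $q$ is not prime.
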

Based on conclusions $(\mathrm{i})$ and $(\mathrm{iv})$ of Lemma \ref{lem1.2}, a positive answer to Problem \ref{prob1.1} can be given provided that Artin's conjecture holds true. Artin conjectured that $2$ is a primitive root modulo infinitely many primes. More generally, for any non-square positive integer \(n\), there are infinitely many primes \(p\) for which \(n\) is a primitive root modulo \(p\). Artin's conjecture follows from the Generalized Riemann Hypothesis (Hooley \cite{Hooley}). Though no \(n\) is known to satisfy it, Heath-Brown \cite{Heath-Brown} proved it holds for at least one of \(\{2, 3, 5\}\).

Furthermore, Aaronson, Groenland and Johnston \cite{Aaronson-Groenland-Johnston} posed several interesting and challenging problems, such as:
\begin{prob}\cite{Aaronson-Groenland-Johnston}\label{prob1.2}
For which \( n \in \mathbb{N} \) is \( h_q(n) = 0 \)?
\end{prob}

In 2024, Huang \cite{Huang} obtained a necessary and sufficient condition for \(h_q(n) = 0\) when \(\gcd(q, n) = 1\).
His main result shows that this problem can be reduced to computing the values of the trace function over finite fields.
He also derived the following conclusions.
\begin{lem}\cite{Huang}\label{lem1.3} Let \( p \) be an odd prime, and let \( q \) be a power of \( p \). For any non-negative integer \( d \), the following statements hold:

$(\mathrm{i})$ \( h_q\left(p^d(q + 1)\right) = 0 \).

$(\mathrm{ii})$ \( h_q\left(2p^d(q - 1)\right) = 0 \) if \(4 \mid q + 1 \).

$(\mathrm{iii})$ Let \( \ell \) be an odd prime number such that \( q \) is a primitive root modulo \( 2\ell \). If \( q \) is relatively prime to \( \ell - 1 \), then \( h_q\left(2p^d\ell\right) = 0 \).
\end{lem}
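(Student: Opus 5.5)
The plan is to first recast $h_q(n)=0$ as a statement about cyclic codes, and then to check it on the minimal cyclic codes for each $n$ in the lemma.

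\textbf{Reduction to cyclic codes.} Since every subspace of a cyclically covering subspace's superset is irrelevant here (enlarging preserves covering), $h_q(n)>0$ exactly when some hyperplane $U=\boldsymbol a^{\perp}$ with $\boldsymbol a\neq 0$ is cyclically covering. This means that for every $\boldsymbol x$ some shift $\tau^i\boldsymbol x$ is orthogonal to $\boldsymbol a$. In other words, every vector of the cyclic code $C_{\boldsymbol a}$ spanned by the shifts of $\boldsymbol a$ has a zero coordinate, because the coordinates of the "cyclic correlation" of $\boldsymbol a$ with $\boldsymbol x$ are the numbers $\langle \boldsymbol a,\tau^{i}\boldsymbol x\rangle$. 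Hence $h_q(n)=0$ if and only if every nonzero cyclic code in $\mathbb F_q^n$ contains a full-weight codeword. Every nonzero cyclic code contains a minimal one, so it suffices to treat minimal cyclic codes.

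We also reduce the factor $p^d$: by Lemma~\ref{lem1.2}(ii), $h_q(p^dn)\le p^dh_q(n)$, so it is enough to take $d=0$, where $\gcd(n,q)=1$. In that case the minimal codes are the trace codes
\[C_\beta=\{(\mathrm{Tr}(\theta\beta^i))_{i=0}^{n-1}:\theta\in\mathbb F_{q^k}\},\]
where $\beta$ runs over representatives of the $q$-cyclotomic classes of $n$-th roots of unity and $k$ is the degree of $\beta$ over $\mathbb F_q$. The task is therefore to find, for each $\beta$, some $\theta$ with $\mathrm{Tr}(\theta\beta^i)\neq0$ for all $i$.

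\textbf{Part (i), $n=q+1$.} If $\beta=\pm1$, then $C_\beta$ is spanned by $(\beta^i)$, which already has full weight. Otherwise $k=2$ and $\beta^q=\beta^{-1}$, so
\[\mathrm{Tr}(\theta\beta^i)=\theta\beta^i+\theta^q\beta^{-i}.\]
This vanishes iff $\beta^{2i}=-\theta^{q-1}$. The element $-\theta^{q-1}$ ranges over all of $\mu_{q+1}$ as $\theta$ varies, using that $q$ is odd so $-1\in\mu_{q+1}$. On the other hand $\langle\beta^2\rangle$ lies in the subgroup of squares of $\mu_{q+1}$, which has index $2$ since $q+1$ is even. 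Choosing $-\theta^{q-1}$ to be a non-square gives a full-weight word.

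\textbf{Part (ii), $n=2(q-1)$.} Here $\beta^{q-1}=\pm1$. If $\beta^{q-1}=1$, then $\beta\in\mathbb F_q$ and $(\beta^i)$ is full weight. If $\beta^{q-1}=-1$, then $k=2$ and $\beta^q=-\beta$, so
\[\mathrm{Tr}(\theta\beta^i)=\beta^i\bigl(\theta+(-1)^i\theta^q\bigr).\]
It suffices to choose $\theta$ with $\theta^{q-1}\neq\pm1$, which is possible because $q^2-1>2(q-1)$. I expect to double-check where the hypothesis $4\mid q+1$ really enters; possibly it is only needed in the author's trace formulation. This check is the point I would scrutinize.

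\textbf{Part (iii), $n=2\ell$.} This is the main obstacle. Since $q$ is a primitive root mod $2\ell$, we have the factorization $x^{2\ell}-1=(x-1)(x+1)\Phi_\ell\Phi_{2\ell}$ into irreducibles over $\mathbb F_q$. The codes for $x\mp1$ are spanned by $(1,\dots,1)$ and $((-1)^i)$, both full weight. Rather than count hyperplanes in $\mathbb F_{q^{\ell-1}}$, which fails when $\ell\ge q$, I would identify the remaining two minimal codes explicitly. The minimal ideal for $\Phi_\ell$ consists of the period-$\ell$ vectors whose period block $(c_0,\dots,c_{\ell-1})$ has zero sum. The ideal for $\Phi_{2\ell}$ consists of vectors with $c_{i+\ell}=-c_i$ and alternating sum of the block equal to zero (apply $x\mapsto -x$).

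In both cases a full-weight word exists: take the block $(1,\dots,1,-(\ell-1))$, or its sign-twisted analogue $(1,-1,1,\dots)$ adjusted in the last entry. The last entry is nonzero exactly when $p\nmid \ell-1$, which is precisely what $\gcd(q,\ell-1)=1$ guarantees. Verifying the identification of these two minimal ideals (via the dimension count $\ell-1$ and annihilation by the complementary factor) is the step I would write out carefully.
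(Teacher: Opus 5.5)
Your proof is correct. The paper does not prove this lemma; it quotes it from Huang, whose argument, per the introduction, goes through a criterion on values of the trace function. So the useful comparison is with the tools the paper develops, and your route uses exactly those:
\begin{itemize}
\item the cyclic-code criterion of Theorem~\ref{lem1}, which your hyperplane/correlation argument rederives (the garbled first sentence is meant to say that supersets of covering subspaces are covering, which is the right monotonicity);
\item removal of $p^d$ via $h_q(pn)\le p\,h_q(n)$, i.e.\ Lemma~\ref{lem1.2}(ii), reproved as Theorem~\ref{thm4.1} (Theorem~\ref{lem2} would serve equally);
\item an explicit check of every minimal cyclic code.
\end{itemize}
Your part (i) is the computation of Example~\ref{exmp3.7}, but done for every $\beta\in\mu_{q+1}\setminus\{\pm1\}$ rather than only for a primitive root of unity. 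That is what the criterion actually requires, since every minimal code must be checked.

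What your approach buys is a short, self-contained argument that makes the role of each hypothesis visible.

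Your suspicion in (ii) is justified. For $n=2(q-1)$, every non-linear irreducible factor of $X^n-1$ has the form $X^2-\gamma$ with $\gamma\in\mathbb F_q^*$. The corresponding minimal code is therefore $\{\boldsymbol c: c_{i+2}=\delta c_i\}$ for some $\delta\in\mathbb F_q^*$, which visibly contains full-weight words. The condition $4\mid q+1$ is never used; for example, $h_5(8)=0$ even though $4\nmid 6$.

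Similarly, in (iii) the condition $\gcd(q,\ell-1)=1$ is needed only for your particular block $(1,\dots,1,-(\ell-1))$. For odd $q$ you can choose the first $\ell-1$ entries in $\mathbb F_q^*$ with nonzero sum and set the last entry to minus that sum (and sign-twist for $\Phi_{2\ell}$). So only the primitive-root hypothesis, which makes $\Phi_\ell$ and $\Phi_{2\ell}$ irreducible, is essential.
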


In 2025, Sun, Ma and Zeng \cite{Sun-Ma-Zeng} investigated the cyclically covering subspaces of the finite field \(\mathbb{F}_{q^n}\) and determined the value of \(h_2(n)\) for certain special values of \(n\). In particular, they showed that \(h_2(21) = 4\). Finally, they established several lower bounds for \(h_q(n)\) in the case where \(\gcd(q, n) = 1\).

In 2025, Li and Yuan \cite{Li-Yuan} proved that the problem of determining \(h_q(n) = 0\) can be reduced to the case where \(\gcd(q, n) = 1\). Specifically, they established the following result.
\begin{lem}\cite{Li-Yuan}\label{lem1.4} 
Let \( q \) be a power of a prime $p$, and let \( n \) be a positive integer satisfying \( \gcd(p, n) = 1 \). Then for any non-negative integer \( k \), we have \( h_q(np^k) = 0 \) if and only if \( h_q(n) = 0 \).
\end{lem}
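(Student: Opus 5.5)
The plan is to prove the two implications separately. Each follows from the general inequalities for \(h_q\) recalled in Lemma \ref{lem1.1} and Lemma \ref{lem1.2}, combined with a simple induction on \(k\). The hypothesis \(\gcd(p,n)=1\) is not really needed for this argument; it only matters for the intended application, which reduces Problem \ref{prob1.2} to the coprime case.

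First, suppose \(h_q(np^k)=0\); we show \(h_q(n)=0\). By Lemma \ref{lem1.1}(ii) with the factorization \(np^k = n\cdot p^k\),
\[
0 = h_q(np^k) \ge \max\{h_q(n), h_q(p^k)\} \ge h_q(n) \ge 0,
\]
since \(h_q\) is always nonnegative. Hence \(h_q(n)=0\). One could equally use the superadditivity in Lemma \ref{lem1.2}(i) together with \(h_q(p^k)\ge 0\).

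Conversely, suppose \(h_q(n)=0\). We prove \(h_q(np^j)=0\) for all \(j\ge 0\) by induction on \(j\). The base case \(j=0\) is the hypothesis. For the inductive step, apply Lemma \ref{lem1.2}(ii) with \(np^j\) in place of \(n\):
\[
0 \le h_q(np^{j+1}) = h_q\bigl(p\cdot np^j\bigr) \le p\,h_q(np^j) = 0 .
\]
Thus \(h_q(np^{j+1})=0\), and taking \(j=k\) gives the claim.

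I do not expect a genuine obstacle here, since all the content lies in the cited inequalities. The step carrying the real weight is Lemma \ref{lem1.2}(ii), \(h_q(pn)\le p\,h_q(n)\); I take it as given. If one wanted a self-contained argument, the idea would be as follows. Write \(\mathbb F_q^{pn}\) as \(p\) interleaved copies of \(\mathbb F_q^n\), taking coordinates by residue modulo \(p\). Then \(\tau^p\) acts as the cyclic shift on each copy, and one can build a covering subspace from a covering subspace in each coordinate class. Making this precise is the only nontrivial point, but it is not needed given the results already quoted.
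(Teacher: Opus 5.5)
Your argument is correct, and you are right that \(\gcd(n,p)=1\) is not needed. Lemma~\ref{lem1.1}(ii) together with iterated Lemma~\ref{lem1.2}(ii) gives \(h_q(n)\le h_q(np^k)\le p^k h_q(n)\) for every \(n\). This is exactly the sandwich the paper records at the end as Corollary~\ref{coro4.1}. The paper's own proof of the statement, Theorem~\ref{lem2}, takes a different route and never uses the upper-bound inequality.

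It works through the criterion of Theorem~\ref{lem1}. With \(F=X^n-1\) one has \(X^{np^k}-1=F^{p^k}\) and \(F^{p^k-1}=1+X^n+\cdots+X^{(p^k-1)n}\). So \(c\mapsto F^{p^k-1}c\) is the \(p^k\)-fold repetition map \(R_n\to R_{np^k}\), which preserves full weight.

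\begin{itemize}
\item Take a nonzero code \(\langle g\rangle\subseteq R_{np^k}\) with \(g=\prod_j f_j^{e_j}\), and set \(d=\prod_{e_j=p^k}f_j\). Then \(\langle d\rangle\subseteq R_n\) is nonzero, and a full-weight word of \(\langle d\rangle\) repeats into a full-weight word of \(\langle g\rangle\).
\item Conversely, for a nonzero \(\langle d\rangle\subseteq R_n\), every element of the nonzero code \(\langle F^{p^k-1}d\rangle\subseteq R_{np^k}\) is a repetition of an element of \(\langle d\rangle\). Hence a full-weight word there yields one in \(\langle d\rangle\).
\end{itemize}

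The trade-off is as follows. Your route is two lines and quantitative, but all of its content sits in \(h_q(pn)\le p\,h_q(n)\), which is a genuinely harder theorem. The paper's route gives only the vanishing equivalence, and it uses square-freeness of \(F\) to list the cyclic codes. In exchange, it is elementary once Theorem~\ref{lem1} is available.

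One caution about your closing aside. Assembling a covering subspace of \(\mathbb F_q^{pn}\) from covering subspaces of the coordinate classes would bound \(h_q(pn)\) from below, not from above. A self-contained proof of \(h_q(pn)\le p\,h_q(n)\) must instead start from an arbitrary covering subspace of \(\mathbb F_q^{pn}\). From it you must extract \(p\) admissible subspaces of \(R_n\) (Definition~\ref{def4.1}) whose dimensions sum to its codimension. Theorem~\ref{thm4.1} does this with the filtration \(A\cap F^tR_{pn}\) and the repetition identity \(F^{p-1}=1+X^n+\cdots+X^{(p-1)n}\). That argument relies on \(n\)-periodicity, not on the residue classes modulo \(p\).
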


In 2026, Li, Yuan, Li and Zeng \cite{Li-Yuan-Li-Zeng} obtained several sufficient conditions for \(h_q(n) = 0\), which generalize some existing results. Specifically, they showed that \( h_q(\ell^t) = 0 \) whenever \( q \) is a primitive root modulo \( \ell^t \). Moreover, they proved that if \( n \) is odd and \( h_q(n) = 0 \), then also \( h_q(2n) = 0 \). 

In 2026, Li and Yuan \cite{Li-Yuan1} developed a discrete Fourier transform approach to studying cyclically covering subspaces of \(\mathbb F_q^n\) under the condition \(\gcd(n,q)=1\). As a consequence, they obtained necessary and sufficient conditions for \(h_q(n)=0\), and proved that this condition is equivalent to the existence of full-weight codewords in certain cyclic codes over \(\mathbb F_q\). This approach led to several applications. In the case where \(q\) and \(n\) are primes with \(n>q\) and \(q\) is a primitive root modulo \(n\), it gives a unified explanation of the contrast between the binary and odd characteristic cases: one has \(h_2(n)\ge 2\), whereas \(h_q(n)=0\) for odd primes \(q\). They further proved that \(h_3(n)\ge 1\) for every prime \(n>3\) with odd \(\operatorname{ord}_n(3)\), and, assuming the Generalized Riemann Hypothesis, that for every prime \(q>3\) there exist infinitely many primes \(n>q\) for which \(q\) is not a primitive root modulo \(n\) but \(h_q(n)=0\).  Moreover, they gave algebraic interpretations of the inequalities
\[h_q(mn)\ge \max\{h_q(m),h_q(n)\}\quad\text{and}\quad h_q(mn)\ge h_q(m)+h_q(n),\]
proved the Galois descent inequality
\[h_{q^m}(n)\le h_q(n),\]
generalized a class of constructions attaining the upper bound \(\lfloor \log_q n\rfloor\), and established average lower bounds for \(h_q(n)\) under the Generalized Riemann Hypothesis.

We also mention a problem related to covering vector spaces over \(\mathbb{F}_q\). Luh \cite{Luh} proved that any vector space \(V\) over a finite field \(\mathbb{F}_q\) can be expressed as the union of \(|\mathbb{F}_q| + 1\) proper subspaces, and such a collection of subspaces is unique up to automorphisms of \(V\).

The purpose of this paper is to develop a cyclic-code approach to the study of cyclically covering subspaces. Our first main result is a cyclic-code criterion for the vanishing of \(h_q(n)\): we prove that \(h_q(n)=0\) if and only if every nonzero cyclic code of length \(n\) over \(\mathbb F_q\) contains a full-weight codeword. This criterion does not require the coprimality assumption \(\gcd(q,n)=1\). As an application, we give a new proof of the reduction result
\[h_q(np^k)=0 \quad \Longleftrightarrow \quad h_q(n)=0,\]
where \(\gcd(n,p)=1\) and \(k\ge 0\). We also introduce admissible subspaces and give an algebraic proof of the inequality \(h_q(pn)\le p h_q(n)\).

The paper is organized as follows. In Section 2, we reduce the condition \(h_q(n)=0\) to the existence of full-weight codewords in nonzero cyclic codes. In Section 3, we relate \(h_q(n)\) to the maximum weight of cyclic codes. In particular, when \(h_q(n)>0\), we obtain sharp bounds for the maximum weight of cyclic codes without full-weight codewords and give explicit examples showing that these bounds are attained. In Section 4, we study the family of cyclic codes containing no full-weight codeword. We determine their number completely over \(\mathbb F_2\), obtain lower bounds over \(\mathbb F_3\), and construct an infinite family of examples: for any odd prime \(q\ge 3\) and integer \(m\ge 4\), we have \(h_q\left(\frac{q^m+1}{2}\right)>0.\) In Section 5, we prove that \(h_q(np^k)=0\) if and only if \(h_q(n)=0,\) where \(k\ge 0\) and \(\gcd(n,p)=1\). We then introduce admissible subspaces and prove the inequality \(h_q(pn)\le p h_q(n)\).

\section{Reduction to Cyclic Codes}
Since we are mainly concerned with the case \(h_q(n)=0\), this paper focuses on subspaces of codimension $1$. Let
\[V_{\boldsymbol{\alpha}}=\{\boldsymbol{x}\in \mathbb F_q^n :(\boldsymbol{x},\boldsymbol{\alpha})=0\},\qquad \boldsymbol{\alpha}\in \mathbb F_q^n\setminus\{\boldsymbol 0\},\]
be an \((n-1)\)-dimensional subspace of \(\mathbb F_q^n\). Denote
\([n]=\{0,1,\ldots,n-1\}.\) Recall that \(V_{\boldsymbol{\alpha}}\) is a cyclically covering subspace if and only if
\[\bigcup_{i=0}^{n-1}\tau^i(V_{\boldsymbol{\alpha}})=\mathbb F_q^n.\]
Equivalently, for every vector \(\boldsymbol{x}\in \mathbb F_q^n\), there exists an index \(i\in [n]\) such that
\(\boldsymbol{x}\in \tau^i(V_{\boldsymbol{\alpha}}).\)
For each \(i\in[n]\), we have the following equivalence
\[\boldsymbol{x}\in \tau^i(V_{\boldsymbol{\alpha}}) \Longleftrightarrow \tau^{-i}\boldsymbol{x}\in V_{\boldsymbol{\alpha}} \Longleftrightarrow
(\tau^{-i}\boldsymbol{x},\boldsymbol{\alpha})=0 \Longleftrightarrow (\boldsymbol{x},\tau^i\boldsymbol{\alpha})=0.\]
Hence we obtain
\(\tau^i(V_{\boldsymbol{\alpha}})=V_{\tau^i\boldsymbol{\alpha}}.\)
Thus the cyclically covering condition for \(V_{\boldsymbol{\alpha}}\) can be rewritten as
\[\bigcup_{i=0}^{n-1}\tau^i (V_{\boldsymbol {\alpha}})=\bigcup_{i=0}^{n-1}V_{\tau^i\boldsymbol{\alpha}}=\mathbb F_q^n.\]
Equivalently, \(V_{\boldsymbol{\alpha}}\) is a cyclically covering subspace if and only if
\[\forall\,\boldsymbol{x}\in\mathbb F_q^n,\quad \exists\, i\in[n]\quad\text{such that}\quad (\boldsymbol{x},\tau^i\boldsymbol{\alpha})=0.\]
Correspondingly, \(V_{\boldsymbol{\alpha}}\) fails to be cyclically covering if and only if there exists a vector \(\boldsymbol{x}\in\mathbb F_q^n\) such that
\[(\boldsymbol{x},\tau^i\boldsymbol{\alpha})\neq 0,\qquad \forall\, i\in[n].\]

It is worth mentioning that Aaronson, Groenland and Johnston \cite{Aaronson-Groenland-Johnston} introduced the notion of vectors that {\it work together} in 2019. Precisely, a vector \(\boldsymbol{\alpha} \in \mathbb{F}_q^n\) is said to {\it work} if for every vector \(\boldsymbol{x} \in \mathbb{F}_q^n\), there exists an integer \(i\in [n]\) such that the inner product \[(\boldsymbol{\alpha}, \tau^i (\boldsymbol{x}))=(\tau^{-i}(\boldsymbol{\alpha}), \boldsymbol{x}) = 0.\]
This condition is equivalent to \(V_{\boldsymbol{\alpha}}\) being a cyclically covering subspace.

A collection of vectors \(\boldsymbol{\alpha}_1,\boldsymbol{\alpha}_2,\dots,\boldsymbol{\alpha}_k\) is said to {\it work together} if for every vector \(\boldsymbol{x} \in \mathbb{F}_q^n\), there exists an integer \(i \in [n]\) such that
\[(\boldsymbol{\alpha}_1 , \tau^i (\boldsymbol{x})) = (\boldsymbol{\alpha}_2 , \tau^i (\boldsymbol{x})) = \dots = (\boldsymbol{\alpha}_k , \tau^i (\boldsymbol{x})) = 0.\]
Similarly, this characterization is equivalent to the intersection \(\bigcap_{j=1}^k V_{\boldsymbol{\alpha}_j}\) being a cyclically covering subspace.

A $k$-dimensional linear subspace \(\mathcal{C}\) of \(\mathbb{F}_q^n\) is called an \([n,k]_q\) linear code. A codeword \(\boldsymbol{c}\in\mathcal{C}\) is said to be a full-weight codeword if \(\operatorname{wt}(\boldsymbol{c})=n\).
In 2026, Li and Yuan \cite{Li-Yuan1} proved that under the condition \(\gcd(q,n)=1\), \(h_q(n)=0\) is equivalent to the statement that every nonzero cyclic code in \(\mathbb{F}_q^n\) contains a full-weight codeword. In fact, the assumption \(\gcd(q,n)=1\) can be removed, and we obtain the following result. The idea of our proof originates from the work of Aaronson, Groenland and Johnston \cite{Aaronson-Groenland-Johnston}.

For a fixed vector \(\boldsymbol{\alpha}\in\mathbb{F}_q^n\), define the map
\[A_{\boldsymbol{\alpha}}: \mathbb{F}_q^n \to \mathbb{F}_q^n,\quad
\boldsymbol{x} \mapsto A_{\boldsymbol{\alpha}}(\boldsymbol{x})
= \big((\boldsymbol{x},\boldsymbol{\alpha}),\, (\boldsymbol{x},\tau\boldsymbol{\alpha}),\, \dots,\, (\boldsymbol{x},\tau^{n-1}\boldsymbol{\alpha})\big).\]
Let \(\operatorname{Im} A_{\boldsymbol{\alpha}}\) denote the image of the map \(A_{\boldsymbol{\alpha}}\). We present the following theorem.
\begin{thm}\label{lem1}
The equality \(h_q(n)=0\) holds if and only if every nonzero cyclic code in \(\mathbb{F}_q^n\) contains a full-weight codeword.
\end{thm}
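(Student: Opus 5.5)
The plan is to turn the statement into a claim about the images \(\operatorname{Im} A_{\boldsymbol{\alpha}}\), and then to identify these images with the nonzero cyclic codes.

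\textbf{Step 1: reduce to codimension one.} Since \(h_q(n)\ge 0\) always, \(h_q(n)=0\) means that no subspace of codimension \(\ge 1\) is cyclically covering. A subspace of codimension \(\ge1\) is contained in some hyperplane \(V_{\boldsymbol\alpha}\). If \(U\subseteq V_{\boldsymbol\alpha}\) is cyclically covering, then so is \(V_{\boldsymbol\alpha}\), because its shifts contain the shifts of \(U\). Hence \(h_q(n)=0\) if and only if no \(V_{\boldsymbol\alpha}\) with \(\boldsymbol\alpha\ne\boldsymbol 0\) is cyclically covering.

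By the discussion before the theorem, \(V_{\boldsymbol\alpha}\) fails to be cyclically covering exactly when some \(\boldsymbol x\) has \((\boldsymbol x,\tau^i\boldsymbol\alpha)\neq0\) for all \(i\in[n]\). That is, \(A_{\boldsymbol\alpha}(\boldsymbol x)\) is a full-weight vector. So \(h_q(n)=0\) if and only if \(\operatorname{Im}A_{\boldsymbol\alpha}\) contains a full-weight vector for every nonzero \(\boldsymbol\alpha\).

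\textbf{Step 2: each image is a nonzero cyclic code.} The map \(A_{\boldsymbol\alpha}\) is linear, so \(\operatorname{Im}A_{\boldsymbol\alpha}\) is a subspace. Using \((\tau\boldsymbol x,\tau^{i}\boldsymbol\alpha)=(\boldsymbol x,\tau^{i-1}\boldsymbol\alpha)\), we get \(A_{\boldsymbol\alpha}(\tau\boldsymbol x)=\tau A_{\boldsymbol\alpha}(\boldsymbol x)\). Hence the image is closed under \(\tau\) and is a cyclic code. It is nonzero: taking \(\boldsymbol x=\boldsymbol\alpha\), or any \(\boldsymbol x\) with \((\boldsymbol x,\boldsymbol\alpha)\ne0\), gives a nonzero image vector. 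This proves the direction ``every nonzero cyclic code has a full-weight codeword \(\Rightarrow h_q(n)=0\)''.

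\textbf{Step 3: every nonzero cyclic code is such an image.} This is the converse, and I expect it to be the main obstacle. I would work in \(R=\mathbb F_q[x]/(x^n-1)\), where cyclic codes are ideals and every ideal is principal, \(C=(g)\) with \(g\mid x^n-1\); no coprimality with \(q\) is needed for this. Write the coordinates of \(A_{\boldsymbol\alpha}(\boldsymbol x)\) as \(\sum_j x_j\alpha_{j-i}\). This is a correlation of \(\boldsymbol x\) with \(\boldsymbol\alpha\), i.e.\ a product \(x(t)\,\alpha^*(t)\) in \(R\), where \(\alpha^*(t)=\sum_k\alpha_{-k}t^{k}\) is the reciprocal polynomial modulo \(x^n-1\). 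The bookkeeping here needs care, and I would verify it on the coordinate \(i\).

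It follows that \(\operatorname{Im}A_{\boldsymbol\alpha}\) is the principal ideal \((\alpha^*)\). Since \(\boldsymbol\alpha\mapsto\alpha^*\) is a bijection of \(R\), choosing \(\boldsymbol\alpha\) with \(\alpha^*=g\) realizes \(C\) as \(\operatorname{Im}A_{\boldsymbol\alpha}\), with \(\boldsymbol\alpha\ne0\) because \(g\neq 0\). Therefore, if \(h_q(n)=0\), every nonzero cyclic code \(C\) contains a full-weight word.

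An alternative for Step 3 avoids polynomials. Note that \(\operatorname{Im}A_{\boldsymbol\alpha}\) is the orthogonal complement of the space of coefficient vectors \(\boldsymbol c\) with \(\sum c_i\tau^i\boldsymbol\alpha=0\). One can then pick \(\boldsymbol\alpha\) generating the dual code appropriately. I would use the polynomial identification first, as it is cleaner.
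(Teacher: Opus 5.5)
Your proposal is correct and follows essentially the same route as the paper. You show that $\operatorname{Im}A_{\boldsymbol\alpha}$ is a cyclic code, identify $\theta(\operatorname{Im}A_{\boldsymbol\alpha})$ with the ideal $\langle\boldsymbol\alpha^*(X)\rangle$ so that every cyclic code arises, and translate ``$V_{\boldsymbol\alpha}$ is not cyclically covering'' into ``$\operatorname{Im}A_{\boldsymbol\alpha}$ contains a full-weight vector.'' You also make explicit two points the paper leaves implicit: the reduction of $h_q(n)=0$ to hyperplanes, and the fact that $\operatorname{Im}A_{\boldsymbol\alpha}\neq\{\boldsymbol 0\}$. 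For the latter, do not use $\boldsymbol x=\boldsymbol\alpha$, since for $q=n=2$ and $\boldsymbol\alpha=(1,1)$ one gets $A_{\boldsymbol\alpha}(\boldsymbol\alpha)=\boldsymbol 0$; an $\boldsymbol x$ with $(\boldsymbol x,\boldsymbol\alpha)\neq0$ is what works.
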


\begin{proof}
We first prove that the set \(\operatorname{Im} A_{\boldsymbol{\alpha}}\) is a cyclic code in \(\mathbb{F}_q^n\). It is trivial that \(A_{\boldsymbol{\alpha}}\) is a linear map, so \(\operatorname{Im} A_{\boldsymbol{\alpha}}\) is a linear subspace of \(\mathbb{F}_q^n\). The fact that \(\operatorname{Im} A_{\boldsymbol{\alpha}}\) is cyclic follows immediately from the identity
\[\tau\big(A_{\boldsymbol{\alpha}}(\boldsymbol{x})\big) = A_{\boldsymbol{\alpha}}\big(\tau(\boldsymbol{x})\big).\]
From the identity \((\tau^i \boldsymbol{\alpha})_j = \alpha_{j-i}\), we obtain \(\big(A_{\boldsymbol{\alpha}}(\boldsymbol{x})\big)_i = (\boldsymbol{x},\tau^i \boldsymbol{\alpha}) = \sum_{j=0}^{n-1} x_j \alpha_{j-i},\) where all subscripts are taken modulo \(n\).

Next we determine the generator polynomial of \(\operatorname{Im} A_{\boldsymbol{\alpha}}\). Let \(R_n = \mathbb{F}_q[X]/(X^n - 1)\). Define the map
\[\theta : \mathbb{F}_q^n \to R_n,\quad \boldsymbol{x} = (x_0,\dots,x_{n-1}) \mapsto \boldsymbol{x}(X) = \sum_{i=0}^{n-1} x_i X^i.\]
According to Proposition 3.1 in \cite{Aaronson-Groenland-Johnston}, for any vector  
\(\boldsymbol{\alpha}=(\alpha_0,\dots,\alpha_{n-1})\in\mathbb{F}_q^n\),  
we define its reversed vector \(\boldsymbol{\alpha}^*\) by  
\(\alpha_i^* = \alpha_{-i}\), with indices taken modulo \(n\).
It then follows that
\[\boldsymbol{\alpha}^*(X) = \sum_{i=0}^{n-1} \alpha_i^* X^i = \sum_{i=0}^{n-1} \alpha_{-i} X^i=\boldsymbol{\alpha}(X^{-1}) \pmod{X^n - 1},\]
or equivalently, \(\boldsymbol{\alpha}^*(X) = \alpha_0 + \alpha_{n-1} X + \alpha_{n-2} X^2 + \dots + \alpha_1 X^{n-1}.\)

We compute \(\boldsymbol{x}(X)\boldsymbol{\alpha}^*(X)\) in \(R_n\). Suppose
\[\boldsymbol{x}(X)\boldsymbol{\alpha}^*(X) \equiv c_0 + c_1X + \dots + c_{n-1}X^{n-1} \pmod{X^n - 1}.\]
Then
\(c_i = \sum_{j=0}^{n-1} x_j \alpha_{i-j}^*.\) Since
\(\alpha_{i-j}^* = \alpha_{-(i-j)} = \alpha_{j-i},\)
we have
\(c_i = \sum_{j=0}^{n-1} x_j \alpha_{j-i} = \big(\boldsymbol{x}, \tau^i \boldsymbol{\alpha}\big).\) Hence
\[\boldsymbol{x}(X)\boldsymbol{\alpha}^*(X) = \sum_{i=0}^{n-1} \big(\boldsymbol{x}, \tau^i \boldsymbol{\alpha}\big) X^i.\]
Equivalently,
\(\boldsymbol{x}(X)\boldsymbol{\alpha}^*(X) = \theta\big(A_{\boldsymbol{\alpha}}(\boldsymbol{x})\big).\)
It follows that
\[\theta\big(\operatorname{Im} A_{\boldsymbol{\alpha}}\big) = \big\{\theta\big(A_{\boldsymbol{\alpha}}(\boldsymbol{x})\big): \boldsymbol{x} \in \mathbb{F}_q^n\big\} = \big\langle \boldsymbol{\alpha}^{*}(X) \big\rangle \subseteq R_n.\]
Let \(g_{\boldsymbol \alpha}(X)=\gcd(\boldsymbol{\alpha}^{*}(X),X^n-1)\). Then \(g_{\boldsymbol \alpha}(X)\) is exactly the generator polynomial of \(\operatorname{Im} A_{\boldsymbol{\alpha}}\). By the arbitrariness of \(\boldsymbol{\alpha}^*(X)\), we conclude that \(\operatorname{Im} A_{\boldsymbol{\alpha}}\) can represent any cyclic code in \(\mathbb{F}_q^n\).

Then \(V_{\boldsymbol{\alpha}}\) is cyclically covering if and only if for every \(\boldsymbol{x} \in \mathbb{F}_q^n\), there exists some integer \(i\) such that
\((\boldsymbol{x},\tau^i \boldsymbol{\alpha}) = 0.\)
This condition is equivalent to \(\mathrm{wt}\big(A_{\boldsymbol{\alpha}}(\boldsymbol{x})\big) < n\) for all $\boldsymbol{x} \in \mathbb{F}_q^n$.
Conversely, \(V_{\boldsymbol{\alpha}}\) is not cyclically covering if and only if there exists some \(\boldsymbol{x} \in \mathbb{F}_q^n\) satisfying
\(\mathrm{wt}\big(A_{\boldsymbol{\alpha}}(\boldsymbol{x})\big) = n.\)
Therefore, \(h_q(n)=0\) holds if and only if every nonzero cyclic code in \(\mathbb{F}_q^n\) contains full-weight codewords.
\end{proof}

In Theorem \ref{lem1}, we established a correspondence between vectors \(\boldsymbol{\alpha}\in\mathbb{F}_q^n\) and the ideals \(\langle g_{\boldsymbol{\alpha}}(X)\rangle\) in \(R_n\). However, this correspondence is not bijective. In fact, we have the following theorem. 

Let \(q\) be a power of a prime \(p\), and let \(N=np^k\), where \(\gcd(n,p)=1\) and \(k\ge 0\). Suppose
\[X^N - 1 = \prod_{i=1}^{r} f_i(X)^{p^k},\]
where the polynomials \(f_i(X)\) are distinct and irreducible. Given a vector \(\boldsymbol{\alpha} \in \mathbb{F}_q^N\), let the ideal associated to \(\boldsymbol{\alpha}\) be \(\langle g_{\boldsymbol{\alpha}}(X) \rangle\), and write
\[g_{\boldsymbol{\alpha}}(X) = \prod_{i=1}^{r} f_i(X)^{s_i},\qquad 0 \le s_i \le p^k.\]
Also define
\[h_{\boldsymbol{\alpha}}(X) = \frac{X^N - 1}{g_{\boldsymbol{\alpha}}(X)} = \prod_{i=1}^{r} f_i(X)^{p^k - s_i}.\]

\begin{thm}\label{thm2.5}
The number of vectors \(\boldsymbol{\beta} \in \mathbb{F}_q^N\) satisfying \(\langle g_{\boldsymbol{\alpha}}(X) \rangle = \langle g_{\boldsymbol{\beta}}(X) \rangle\) is
\[\#\big\{\boldsymbol{\beta} \in \mathbb{F}_q^N : \langle g_{\boldsymbol{\alpha}}(X) \rangle = \langle g_{\boldsymbol{\beta}}(X) \rangle\big\} = q^{\deg h_{\boldsymbol{\alpha}}} \prod_{f_i \mid h_{\boldsymbol{\alpha}}} \left(1 - q^{-\deg f_i}\right).\]
\end{thm}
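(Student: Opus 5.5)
Since \(\boldsymbol{\beta}\mapsto\boldsymbol{\beta}^*\) is a bijection of \(\mathbb{F}_q^N\), I will count polynomials \(a(X)=\boldsymbol{\beta}^*(X)\in R_N\) (all of degree \(<N\)) satisfying \(\langle a(X)\rangle=\langle g(X)\rangle\) in \(R_N\), where \(g=g_{\boldsymbol{\alpha}}\). From the proof of Theorem \ref{lem1}, \(\theta(\operatorname{Im}A_{\boldsymbol\beta})=\langle\boldsymbol\beta^*(X)\rangle=\langle g_{\boldsymbol\beta}(X)\rangle\). The quantity to compute is therefore the number of generators of the principal ideal \(I=\langle g\rangle\) of \(R_N\).

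\emph{Step 1 (structure of \(I\)).} Multiplication by \(g\) gives a surjective \(R_N\)-module map \(R_N\to I\). Its kernel is \(\langle h\rangle\) with \(h=h_{\boldsymbol{\alpha}}\), since \(gu\equiv 0 \pmod{X^N-1}\) iff \(h\mid u\). Hence \(I\cong R_N/\langle h\rangle\cong \mathbb{F}_q[X]/(h)\) as modules over \(S=\mathbb{F}_q[X]/(h)\), and \(|I|=q^{\deg h}\). An element \(a\in I\) generates \(I\) iff under this isomorphism it corresponds to a generator of the cyclic \(S\)-module \(S\), i.e.\ to a unit of \(S\). So the count equals \(|S^\times|\). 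Concretely, \(a=gu\) generates \(I\) iff \(\gcd(u,h)=1\); I will check this directly. If \(u\) is invertible mod \(h\), say \(uv\equiv 1 \pmod h\), then \(g\equiv g uv \pmod{X^N-1}\) because \(g h\mid g(uv-1)\), so \(g\in\langle a\rangle\). Conversely, if \(g=a w\) in \(R_N\), then \(g(1-uw)\equiv0\), so \(h\mid 1-uw\).

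\emph{Step 2 (count units).} By CRT, \(S\cong\prod_{f_i\mid h}\mathbb{F}_q[X]/(f_i^{p^k-s_i})\). Each factor is a local ring with maximal ideal \((f_i)\) and residue field of size \(q^{\deg f_i}\). Its units therefore number \(q^{(p^k-s_i)\deg f_i}(1-q^{-\deg f_i})\). Multiplying over the \(i\) with \(s_i<p^k\), i.e.\ those with \(f_i\mid h\), gives \(q^{\deg h}\prod_{f_i\mid h}(1-q^{-\deg f_i})\).

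\emph{Step 3 (bookkeeping).} I will verify that \(\langle g_{\boldsymbol\beta}\rangle=\langle g_{\boldsymbol\alpha}\rangle\) is exactly the condition \(\langle\boldsymbol\beta^*(X)\rangle=I\), which is immediate from the equality of ideals noted above. I will also check that distinct \(\boldsymbol\beta\) give distinct residues. This holds because representatives of degree \(<N\) are unique.

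The main obstacle is Step 1: showing that the generators of \(I\) correspond exactly to units modulo \(h\) and not modulo \(X^N-1\). This matters because multiplication by \(g\) is not injective on \(R_N\), and it is here that the repeated factors \(f_i^{p^k}\) must be handled with care. I will rely on the direct divisibility argument given in Step 1 rather than on any semisimplicity assumption, since \(R_N\) need not be semisimple when \(k>0\).
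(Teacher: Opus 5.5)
Your proposal is correct and follows essentially the same route as the paper. You reduce via $\boldsymbol\beta\mapsto\boldsymbol\beta^*(X)$ to counting generators $g_{\boldsymbol\alpha}u$ of $\langle g_{\boldsymbol\alpha}(X)\rangle$, show by the same divisibility argument that these are exactly the ones with $u$ a unit modulo $h_{\boldsymbol\alpha}$, and count units by CRT over the local rings $\mathbb F_q[X]/(f_i^{p^k-s_i})$; your identification $\langle g_{\boldsymbol\alpha}\rangle\cong\mathbb F_q[X]/(h_{\boldsymbol\alpha})$ also makes explicit the injectivity of $u \bmod h_{\boldsymbol\alpha}\mapsto g_{\boldsymbol\alpha}u$, which the paper uses only implicitly.
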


\begin{proof}
Let $R_N=\mathbb F_q[X]/(X^N-1).$
For a vector \(\boldsymbol{\beta}\in \mathbb F_q^N\), write \(\boldsymbol \beta^*(X)=\boldsymbol \beta(X^{-1}) \pmod{X^N-1}.\)
The map
\(\boldsymbol{\beta}\mapsto \boldsymbol \beta^*(X)\)
is a bijection from \(\mathbb F_q^N\) onto \(R_N\). Hence it suffices to count the number of elements \(a(X)\in R_N\) such that
\(\langle a(X)\rangle=\langle g_{\boldsymbol{\alpha}}(X)\rangle.\) Let
\[g_{\boldsymbol{\alpha}}(X)=\gcd(\boldsymbol \alpha^*(X),X^N-1),\qquad
h_{\boldsymbol{\alpha}}(X)=\frac{X^N-1}{g_{\boldsymbol{\alpha}}(X)}.\]
Then \(X^N-1=g_{\boldsymbol{\alpha}}(X)h_{\boldsymbol{\alpha}}(X).\)
Every element of the ideal \(\langle g_{\boldsymbol{\alpha}}(X)\rangle\subseteq R_N\) can be written in the form
\(g_{\boldsymbol{\alpha}}(X)u(X)\pmod{X^N-1}.\)

We claim that \(\langle g_{\boldsymbol{\alpha}}(X)u(X)\rangle=\langle g_{\boldsymbol{\alpha}}(X)\rangle\)
if and only if \(u(X)\) is a unit modulo \(h_{\boldsymbol{\alpha}}(X)\), i.e.
\[u(X)\in \left(\mathbb F_q[X]/(h_{\boldsymbol{\alpha}}(X))\right)^*.\]
Indeed, if \(\langle g_{\boldsymbol{\alpha}}(X)u(X)\rangle=\langle g_{\boldsymbol{\alpha}}(X)\rangle\), then there exists a polynomial \(v(X)\) such that
\[g_{\boldsymbol{\alpha}}(X)\equiv v(X)g_{\boldsymbol{\alpha}}(X)u(X)\pmod{X^N-1}.\]
Since \(X^N-1=g_{\boldsymbol{\alpha}}(X)h_{\boldsymbol{\alpha}}(X)\), this is equivalent to
\[g_{\boldsymbol{\alpha}}(X)\bigl(1-v(X)u(X)\bigr)\equiv 0\pmod{g_{\boldsymbol{\alpha}}(X)h_{\boldsymbol{\alpha}}(X)}.\]
Hence \(h_{\boldsymbol{\alpha}}(X)\mid 1-v(X)u(X),\) which means that \(u(X)\) is invertible modulo \(h_{\boldsymbol{\alpha}}(X)\).

Conversely, if \(u(X)\) is invertible modulo \(h_{\boldsymbol{\alpha}}(X)\), then there exists \(v(X)\in \mathbb F_q[X]\) such that
\(u(X)v(X)\equiv 1\pmod{h_{\boldsymbol{\alpha}}(X)}.\)
Thus
\(u(X)v(X)-1=h_{\boldsymbol{\alpha}}(X)w(X)\)
for some \(w(X)\in\mathbb F_q[X]\). Multiplying by \(g_{\boldsymbol{\alpha}}(X)\), we get
\[g_{\boldsymbol{\alpha}}(X)u(X)v(X)-g_{\boldsymbol{\alpha}}(X)
= g_{\boldsymbol{\alpha}}(X)h_{\boldsymbol{\alpha}}(X)w(X)
\equiv 0\pmod{X^N-1}.\]
Therefore \(g_{\boldsymbol{\alpha}}(X)\in \langle g_{\boldsymbol{\alpha}}(X)u(X)\rangle,\) and hence
\(\langle g_{\boldsymbol{\alpha}}(X)u(X)\rangle=\langle g_{\boldsymbol{\alpha}}(X)\rangle.\)
This proves the claim.

Consequently, the number of elements \(a(X)\in R_N\) satisfying
\(\langle a(X)\rangle=\langle g_{\boldsymbol{\alpha}}(X)\rangle\)
is precisely
\[\left|\left(\mathbb F_q[X]/(h_{\boldsymbol{\alpha}}(X))\right)^*\right|.\]
It remains to compute the number of units in \(\mathbb F_q[X]/(h_{\boldsymbol{\alpha}}(X))\). Since
\[h_{\boldsymbol{\alpha}}(X)=\frac{X^N-1}{g_{\boldsymbol{\alpha}}(X)}
=\prod_{i=1}^r f_i(X)^{p^k-s_i},\]
the Chinese remainder theorem gives
\[\mathbb F_q[X]/(h_{\boldsymbol{\alpha}}(X)) \cong \prod_{p^k-s_i>0}\mathbb F_q[X]/\left(f_i(X)^{p^k-s_i}\right).\]
For an irreducible polynomial \(f_i(X)\) of degree \(d_i=\deg f_i\), the local ring \(\mathbb F_q[X]/\left(f_i(X)^a\right)\)
has \(q^{ad_i}\) elements, and its non-units are exactly the elements divisible by \(f_i(X)\). Therefore the number of units in it is
\[q^{ad_i}-q^{(a-1)d_i}=q^{ad_i}\left(1-q^{-d_i}\right).\]
Taking the product over all indices with \(p^k-s_i>0\), we obtain
\[\left|\left(\mathbb F_q[X]/(h_{\boldsymbol{\alpha}}(X))\right)^*\right|=\prod_{p^k-s_i>0}
q^{(p^k-s_i)\deg f_i}\left(1-q^{-\deg f_i}\right).\]
Since
\[\deg h_{\boldsymbol{\alpha}}=\sum_{p^k-s_i>0}(p^k-s_i)\deg f_i,\]
this becomes
\[\left|\left(\mathbb F_q[X]/(h_{\boldsymbol{\alpha}}(X))\right)^*\right|=q^{\deg h_{\boldsymbol{\alpha}}}
\prod_{f_i\mid h_{\boldsymbol{\alpha}}}\left(1-q^{-\deg f_i}\right).\]
Therefore
\[\#\big\{\boldsymbol{\beta} \in \mathbb{F}_q^N : \langle g_{\boldsymbol{\alpha}}(X) \rangle=\langle g_{\boldsymbol{\beta}}(X) \rangle\big\}=q^{\deg h_{\boldsymbol{\alpha}}}
\prod_{f_i\mid h_{\boldsymbol{\alpha}}}\left(1-q^{-\deg f_i}\right).\]
\end{proof}

\begin{rem}\label{rem2.1}
If \(\gcd(n,q)=1\), Theorem \ref{thm2.5} admits some simplifications. In the coprime case, the polynomial \(X^n-1\) is square-free. This corresponds to the case \(k=0\) in Theorem~\ref{thm2.5}. Hence \(X^n-1=\prod_{i=1}^{r} f_i(X).\)
For a vector \(\boldsymbol{\alpha}\in\mathbb F_q^n\), write \(g_{\boldsymbol{\alpha}}(X)=\prod_{i\in J} f_i(X)\) for some subset \(J\subseteq\{1,\ldots,r\}\). Then
\[h_{\boldsymbol{\alpha}}(X)=\frac{X^n-1}{g_{\boldsymbol{\alpha}}(X)}=\prod_{i\notin J} f_i(X).\]
Therefore Theorem~\ref{thm2.5} reduces to
\[\#\bigl\{\boldsymbol{\beta}\in\mathbb F_q^n:\langle g_{\boldsymbol{\beta}}(X)\rangle=\langle g_{\boldsymbol{\alpha}}(X)\rangle\bigr\}=\prod_{i\notin J}\left(q^{\deg f_i}-1\right).\]
\end{rem}

\begin{coro}\label{coro2.2}Let \(\mathcal C=\langle g(X)\rangle\) be a nonzero cyclic code containing no full-weight codeword in \(R_N=\mathbb F_q[X]/(X^N-1)\), and write the factorization
\[X^N-1=\prod_{i=1}^r f_i(X)^{p^k},\qquad g(X)=\prod_{i=1}^r f_i(X)^{s_i},\]
where the integers satisfy \(0\le s_i\le p^k\). Set
\(h(X)=(X^N-1)/g(X).\)
Then the total number of \((N-1)\)-dimensional cyclically covering subspaces \(V_{\boldsymbol\beta}\subseteq\mathbb F_q^N\) satisfying the image condition \(\theta(\operatorname{Im}A_{\boldsymbol\beta})=C\) is
\[\frac{1}{q-1}\,q^{\deg h}\prod_{f_i\mid h}\left(1-q^{-\deg f_i}\right).\]
\end{coro}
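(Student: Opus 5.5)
The plan is to combine the proof of Theorem \ref{lem1} with the count in Theorem \ref{thm2.5}, and then pass from vectors to subspaces by dividing out the scalar multiples.

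\textbf{Step 1: the vectors with image $\mathcal C$ are exactly those of Theorem \ref{thm2.5}.} From the proof of Theorem \ref{lem1}, $\theta(\operatorname{Im}A_{\boldsymbol\beta})=\langle \boldsymbol\beta^*(X)\rangle=\langle g_{\boldsymbol\beta}(X)\rangle$. So the condition $\theta(\operatorname{Im}A_{\boldsymbol\beta})=\mathcal C$ says precisely that $\langle g_{\boldsymbol\beta}(X)\rangle=\langle g(X)\rangle$.

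\textbf{Step 2: count these vectors.} Because $\boldsymbol\beta\mapsto\boldsymbol\beta^*(X)$ is a bijection $\mathbb F_q^N\to R_N$, the count is the number of generators $a(X)$ of the ideal $\langle g(X)\rangle$. Theorem \ref{thm2.5} (its proof never uses that $g$ arises from a particular $\boldsymbol\alpha$) gives this number as $q^{\deg h}\prod_{f_i\mid h}(1-q^{-\deg f_i})$. This requires $\mathcal C\ne 0$, i.e.\ $g\ne X^N-1$, which the hypothesis supplies; in particular every such $\boldsymbol\beta$ is nonzero, so $V_{\boldsymbol\beta}$ is a genuine hyperplane.

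\textbf{Step 3: every such hyperplane is cyclically covering.} By the proof of Theorem \ref{lem1}, $V_{\boldsymbol\beta}$ is cyclically covering exactly when $\operatorname{Im}A_{\boldsymbol\beta}$ has no full-weight vector. Here $\operatorname{Im}A_{\boldsymbol\beta}$ corresponds to $\mathcal C$, which has no full-weight codeword by hypothesis. So every $\boldsymbol\beta$ counted in Step 2 yields a cyclically covering $V_{\boldsymbol\beta}$.

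\textbf{Step 4: pass from vectors to subspaces.} Two nonzero vectors define the same hyperplane, $V_{\boldsymbol\beta}=V_{\boldsymbol\beta'}$, if and only if $\boldsymbol\beta'=\lambda\boldsymbol\beta$ for some $\lambda\in\mathbb F_q^*$. Scaling preserves the image condition, since $\langle\lambda\boldsymbol\beta^*(X)\rangle=\langle\boldsymbol\beta^*(X)\rangle$. Hence the set of vectors from Step 2 is a disjoint union of $\mathbb F_q^*$-orbits, each of size $q-1$ and each giving one subspace. Dividing the count by $q-1$ gives the stated formula.

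The only real subtlety is Step 4: one must check that the count in Step 2 is counting vectors rather than subspaces, and that the hyperplane-to-vector correspondence is exactly $(q-1)$-to-one. Both facts are immediate, so the proof should be short.
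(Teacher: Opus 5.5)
Your proposal is correct and follows the same route as the paper, whose proof just combines the count of Theorem~\ref{thm2.5} with the fact that $V_{\boldsymbol\beta}=V_{\boldsymbol\gamma}$ if and only if $\boldsymbol\gamma=\lambda\boldsymbol\beta$ for some $\lambda\in\mathbb F_q^*$. Your Steps 1--3 spell out what the paper leaves implicit: the image condition is the ideal equality $\langle g_{\boldsymbol\beta}(X)\rangle=\langle g(X)\rangle$; every such $V_{\boldsymbol\beta}$ is automatically cyclically covering because $\mathcal C$ has no full-weight codeword; and scaling preserves the image condition, so the count of vectors descends $(q-1)$-to-one to subspaces.
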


\begin{proof}
This corollary follows immediately from the following fact. For any nonzero vectors \(\boldsymbol\beta,\boldsymbol\gamma\in\mathbb F_q^N\), we have the equivalence
\[V_{\boldsymbol\beta}=V_{\boldsymbol\gamma}\iff\exists\,\lambda\in\mathbb F_q^\ast,\ \boldsymbol\gamma=\lambda\boldsymbol\beta.\]
\end{proof}

\section{Maximum Weight of Cyclic Codes}
Nearly all research concerning the weights of cyclic codes focuses on minimum weights, while studies on maximum weights remain quite scarce. In Theorem \ref{lem1}, we prove that \(h_q(n)=0\) is equivalent to every nonzero cyclic code in \(\mathbb{F}_q^n\) containing a full-weight codeword. This establishes a connection between the maximum weight of cyclic codes and the value of \(h_q(n)\). 

The standard parameters of a cyclic code $\mathcal C$ are written as \([n,k,d(\mathcal C)]_q\), where $k$ denotes the dimension of $\mathcal C$ and \(d(\mathcal C)\) stands for the minimum Hamming weight of codewords in $\mathcal C$ (See \cite{Huffman-Pless} for more details). We now additionally take the maximum weight of codewords in $\mathcal C$ into consideration and denote it by \(D(\mathcal C)\). The parameters of $\mathcal C$ are thus written as \([n,k,d(\mathcal C),D(\mathcal C)]_q\), with
\[d(\mathcal C)=\min\{\operatorname{wt}(\boldsymbol c):\boldsymbol 0 \neq \boldsymbol c\in \mathcal C\},\quad D(\mathcal C)=\max\{\operatorname{wt}(\boldsymbol c): \boldsymbol c\in \mathcal C\}.\]
We present the following theorem.

\begin{thm}\label{thm3.1}
If $h_q(n)=0$, then $D(\mathcal C)=n$ holds for every non-zero cyclic code $\mathcal C\subset \mathbb{F}_q^n$. If \(h_q(n)>0\), then there exists a nonzero cyclic code \(\mathcal C\subseteq \mathbb F_q^n\) with \(D(\mathcal C)<n\). Moreover, for every nonzero cyclic code \(\mathcal C\subseteq \mathbb F_q^n\) of dimension \(k\) satisfying \(D(\mathcal C)<n\), one has \(k\ge 2\) and
\[\left\lceil \frac{n(q-1)q^{k-1}}{q^k-1}\right\rceil\le D(\mathcal C)\le n-1.\]
\end{thm}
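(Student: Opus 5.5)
The first two assertions follow directly from Theorem \ref{lem1}. If \(h_q(n)=0\), then every nonzero cyclic code \(\mathcal C\) contains a full-weight codeword, so \(D(\mathcal C)=n\). If \(h_q(n)>0\), the same theorem (in the converse direction) gives a nonzero cyclic code with no full-weight codeword, and for that code \(D(\mathcal C)<n\).

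For the remaining claims, fix a nonzero cyclic code \(\mathcal C\) of dimension \(k\) with \(D(\mathcal C)<n\). The upper bound \(D(\mathcal C)\le n-1\) is then just the hypothesis.

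To show \(k\ge 2\), I would argue by contradiction. Suppose \(k=1\), so \(\mathcal C=\mathbb F_q\boldsymbol c\) for some nonzero \(\boldsymbol c\). Since \(\tau\boldsymbol c\in\mathcal C\), we have \(\tau\boldsymbol c=\lambda\boldsymbol c\) for some \(\lambda\in\mathbb F_q^*\), which gives \(c_{i-1}=\lambda c_i\) for all \(i\). Hence if one coordinate of \(\boldsymbol c\) is nonzero, all coordinates are, and \(\operatorname{wt}(\boldsymbol c)=n\), contradicting \(D(\mathcal C)<n\).

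The lower bound comes from a standard averaging argument. Sum the weights over all codewords:
\[\sum_{\boldsymbol c\in\mathcal C}\operatorname{wt}(\boldsymbol c)=\sum_{i=0}^{n-1}\#\{\boldsymbol c\in\mathcal C: c_i\neq 0\}.\]
For each \(i\), the coordinate projection \(\pi_i:\mathcal C\to\mathbb F_q\) is linear, so it is either zero or onto. It cannot be zero: if it were, cyclicity would force every coordinate projection to vanish (because \(\pi_{i+1}(\tau\boldsymbol c)=\pi_i(\boldsymbol c)\) and \(\mathcal C\) is \(\tau\)-invariant), and then \(\mathcal C=0\). So each \(\pi_i\) is onto, its kernel has \(q^{k-1}\) elements, and exactly \((q-1)q^{k-1}\) codewords have \(c_i\ne0\). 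The total weight is therefore \(n(q-1)q^{k-1}\).

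Since the zero word contributes nothing, the average weight over the \(q^k-1\) nonzero codewords is \(n(q-1)q^{k-1}/(q^k-1)\). The maximum weight is at least this average, and because \(D(\mathcal C)\) is an integer we may take the ceiling, which gives the stated lower bound.

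The only step needing care is the surjectivity of each coordinate projection, and it is handled by the cyclic-invariance remark above.
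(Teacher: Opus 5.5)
Your proof is correct and follows the paper's argument: Theorem~\ref{lem1} gives the first two assertions, and the same coordinate-projection averaging gives the lower bound. The only difference is that you prove $k\ge 2$ directly, since $\tau\boldsymbol c=\lambda\boldsymbol c$ forces full weight, whereas the paper gets it by setting $k=1$ in the lower bound, which yields $D(\mathcal C)\ge n$.
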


\begin{proof}
The case where $h_q(n)=0$ corresponds to Theorem \ref{lem1}. If \(h_q(n)>0\), there exists at least one nonzero cyclic code \(\mathcal C\subset\mathbb{F}_q^n\) satisfying \(D(\mathcal C)\leq n-1\). We next prove that the lower bound of \(D(\mathcal C)\) is \(\left\lceil \frac{n(q-1)q^{k-1}}{q^k-1}\right\rceil\). 

For any \(i\in[n]\), define the projection map \(\pi_i\colon \mathcal C\to\mathbb{F}_q\) by \(\pi_i(\boldsymbol c)=c_i\). We claim that for any nonzero cyclic code $\mathcal C$ and arbitrary coordinate \(i\in[n]\), there exists at least one codeword \(\boldsymbol c\in \mathcal C\) such that \(\pi_i(\boldsymbol c)=c_i\neq0\); otherwise, the definition of cyclic codes forces \(\mathcal C=\{\boldsymbol 0\}\). Therefore, \(\pi_i\) is surjective for every \(i\in[n]\), which implies \(\dim\ker\pi_i=k-1\). Hence, the number of codewords with a nonzero entry at the $i$-th coordinate is
\[q^k - q^{k-1} = q^{k-1}(q-1).\]
By symmetry of cyclic codes, this quantity holds for all $n$ coordinates. We thus obtain
\[\sum_{\boldsymbol c\in \mathcal C} \operatorname{wt}(\boldsymbol c) = n \cdot q^{k-1}(q-1).\]
The average weight of nonzero codewords is then
\[\overline{\operatorname{wt}} = \frac{\displaystyle\sum_{\boldsymbol c\neq \boldsymbol 0} \operatorname{wt}(\boldsymbol c)}{q^k-1} = \frac{n\, q^{k-1}(q-1)}{q^k-1}.\]
The maximum weight cannot be smaller than the average weight. Since all Hamming weights are integers, we conclude
\[D(\mathcal C) \ge \left\lceil \frac{n(q-1) q^{k-1}}{q^k - 1} \right\rceil.\]
Finally, we show that
$k \geq 2.$
When $k=1$, the preceding lower bound reduces to
$D(\mathcal C) \geq n,$
which contradicts $D(\mathcal C) \leq n-1$.
\end{proof}

In the following, we use the DFT method from \cite{Li-Yuan1} to explicitly construct cyclic codes whose maximum weights attain the upper and lower bounds in Theorem \ref{thm3.1}. To this end, we need to briefly introduce the basic concepts and properties of the DFT.

Assume \(\gcd(n,q)=1\). Let \(m=\operatorname{ord}_n(q)\). There exists a primitive $n$-th root of unity $\omega$ in the extension field $\mathbb{F}_{q^m}$ over $\mathbb{F}_q$ such that $\omega^n=1$ and $\omega^i\neq1$ for $1\le i\le n-1$. For a vector
\(\boldsymbol{x}=(x_0,x_1,\ldots,x_{n-1})\in \mathbb F_q^n,\)
its discrete Fourier transform (DFT) is the vector
\(\widehat{\boldsymbol{x}}=(\widehat{\boldsymbol{x}}_0,\widehat{\boldsymbol{x}}_1,\ldots,\widehat{\boldsymbol{x}}_{n-1})\in \mathbb F_{q^m}^n,\)
where
\[\widehat{\boldsymbol{x}}_k=\operatorname{DFT}(\boldsymbol{x})_k=\sum_{j=0}^{n-1}x_j\omega^{jk},\qquad k\in [n].\]
Since \(\gcd(n,q)=1\), the element \(n\) is invertible in \(\mathbb F_q\). The inverse discrete Fourier transform is given by
\[x_j=\operatorname{IDFT}(\widehat{\boldsymbol{x}})_j=\frac1n\sum_{k=0}^{n-1}\widehat{\boldsymbol{x}}_k\omega^{-jk},\qquad j\in [n],\]
where \(\frac 1n\) denotes the inverse of \(n\) in \(\mathbb F_q\).

We record several standard facts about the DFT that will be used later.

\begin{lem}\cite{Li-Yuan1}\label{lem3-1}
The image of \(\mathbb F_q^n\) under the DFT is 
\[\operatorname{DFT}(\mathbb F_q^n)=\left\{\boldsymbol{y}\in \mathbb F_{q^m}^n : y_k^q=y_{qk}\ \text{for all }k\in [n] \right\}.\]
\end{lem}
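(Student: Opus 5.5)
We prove the two inclusions separately, using the DFT and its inverse as mutually inverse \(\mathbb F_{q^m}\)-linear bijections between \(\mathbb F_{q^m}^n\) and itself. Write \(S=\{\boldsymbol{y}\in \mathbb F_{q^m}^n : y_k^q=y_{qk}\ \text{for all }k\in[n]\}\), with indices modulo \(n\). Throughout we rely on two facts. Since \(\gcd(n,q)=1\), multiplication by \(q\) permutes \([n]\). Also, the Frobenius map \(z\mapsto z^q\) is additive on \(\mathbb F_{q^m}\) and fixes \(\mathbb F_q\) pointwise.

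\emph{The inclusion \(\operatorname{DFT}(\mathbb F_q^n)\subseteq S\).} Take \(\boldsymbol{x}\in\mathbb F_q^n\) and \(k\in[n]\). Then
\[\widehat{\boldsymbol{x}}_k^{\,q}=\Bigl(\sum_{j}x_j\omega^{jk}\Bigr)^q=\sum_j x_j^q\omega^{jqk}=\sum_j x_j\omega^{j(qk)}=\widehat{\boldsymbol{x}}_{qk}.\]
The exponent \(qk\) may be reduced modulo \(n\) because \(\omega^n=1\). This direct computation gives the first inclusion.

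\emph{The inclusion \(S\subseteq\operatorname{DFT}(\mathbb F_q^n)\).} Let \(\boldsymbol{y}\in S\) and set \(\boldsymbol{x}=\operatorname{IDFT}(\boldsymbol{y})\in\mathbb F_{q^m}^n\). Then \(\operatorname{DFT}(\boldsymbol{x})=\boldsymbol{y}\), so it is enough to show that each \(x_j\) lies in \(\mathbb F_q\), that is, \(x_j^q=x_j\). Since \(\frac1n\in\mathbb F_q\), it is fixed by Frobenius, and
\[x_j^q=\frac1n\sum_k y_k^q\,\omega^{-jqk}=\frac1n\sum_k y_{qk}\,\omega^{-j(qk)}.\]
Reindexing with \(k'=qk\), which runs over all of \([n]\) because \(k\mapsto qk\) is a bijection, turns the right-hand side into \(\frac1n\sum_{k'}y_{k'}\omega^{-jk'}=x_j\). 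Hence \(\boldsymbol{x}\in\mathbb F_q^n\), which proves the reverse inclusion.

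The only step needing care is this reindexing. It works precisely because \(\gcd(n,q)=1\) makes \(k\mapsto qk\) a bijection of \(\mathbb Z/n\mathbb Z\). The same coprimality gives \(n\ne0\) in \(\mathbb F_q\), which makes the IDFT formula valid. Everything else is a formal Frobenius computation.

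An alternative for the second inclusion is counting. Both sides are \(\mathbb F_q\)-spaces, and \(S\) is determined by free choices on \(q\)-cyclotomic coset representatives. A coset \(C\) contributes \(q^{|C|}\) elements, since its representative must lie in \(\mathbb F_{q^{|C|}}\). So \(|S|=q^n=|\operatorname{DFT}(\mathbb F_q^n)|\), using that the DFT is injective. The direct argument above is simpler, so I would use it.
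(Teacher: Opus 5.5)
The paper gives no proof of this lemma; it is quoted from the earlier work of Li and Yuan, so there is no in-paper argument to compare against. Your proof is correct and complete. The forward inclusion is the standard Frobenius computation. For the reverse inclusion, applying \(z\mapsto z^q\) to the IDFT formula and reindexing by the permutation \(k\mapsto qk\) of \(\mathbb Z/n\mathbb Z\) gives \(x_j^q=x_j\). You use \(\gcd(n,q)=1\) in exactly the two places it is needed: invertibility of \(n\) in \(\mathbb F_q\), and bijectivity of multiplication by \(q\).

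Your counting alternative is also valid. The one implicit point is that each coset size \(|C|\) divides \(m\). Hence \(\mathbb F_{q^{|C|}}\subseteq\mathbb F_{q^m}\), and any value chosen for the representative extends consistently to the whole coset.
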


\begin{lem}\cite{Li-Yuan1}\label{lem3-2}
For every \(\boldsymbol{x}\in\mathbb F_q^n\) and every \(k\in[n]\), we have \(\widehat{\tau(\boldsymbol{x})}_k=\omega^k\widehat{\boldsymbol{x}}_k.\)
\end{lem}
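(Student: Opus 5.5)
The statement to prove is Lemma \ref{lem3-2}: \(\widehat{\tau(\boldsymbol{x})}_k=\omega^k\widehat{\boldsymbol{x}}_k\) for all \(\boldsymbol{x}\in\mathbb F_q^n\) and \(k\in[n]\). The plan is a direct computation from the definitions of \(\tau\) and of the DFT, re-indexing modulo \(n\).

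First we write out the coordinates of \(\tau(\boldsymbol{x})\). By definition \(\tau\big(\sum_{i}x_i\boldsymbol{e}_i\big)=\sum_i x_i\boldsymbol{e}_{i+1}\). Hence the \(j\)-th coordinate of \(\tau(\boldsymbol{x})\) is \(x_{j-1}\), with indices taken modulo \(n\). In particular the \(0\)-th coordinate is \(x_{n-1}\).

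Next we substitute into the DFT formula:
\[\widehat{\tau(\boldsymbol{x})}_k=\sum_{j=0}^{n-1}x_{j-1}\omega^{jk}.\]
Put \(i=j-1\bmod n\). As \(j\) runs over \([n]\), so does \(i\), and \(j\equiv i+1\pmod n\). Since \(\omega^n=1\), the power \(\omega^{jk}\) depends only on \(j\bmod n\), so \(\omega^{jk}=\omega^{(i+1)k}\) even in the wrap-around term \(j=0\), \(i=n-1\). Therefore
\[\widehat{\tau(\boldsymbol{x})}_k=\sum_{i=0}^{n-1}x_i\omega^{(i+1)k}=\omega^k\sum_{i=0}^{n-1}x_i\omega^{ik}=\omega^k\widehat{\boldsymbol{x}}_k.\]

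The only point needing care is the wrap-around term, and it is handled by \(\omega^n=1\), which makes exponents of \(\omega\) well defined modulo \(n\). There is no real obstacle.
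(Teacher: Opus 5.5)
Your proof is correct. From the definition of \(\tau\) you get \((\tau\boldsymbol x)_j=x_{j-1}\), and reindexing with \(\omega^n=1\) gives the factor \(\omega^k\); the paper gives no proof of its own here (it quotes the lemma from Li and Yuan), and your direct reindexing is the standard verification.
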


\begin{lem}\cite{Li-Yuan1}\label{lem3-3}
For any \(\boldsymbol{x}=(x_0,\ldots,x_{n-1}),\boldsymbol{y}=(y_0,\ldots,y_{n-1})\) in \(\mathbb F_q^n\), let the standard inner product be \((\boldsymbol{x},\boldsymbol{y})=\sum_{i=0}^{n-1}x_i y_i.\)
Then
\[(\boldsymbol{x},\boldsymbol{y})=\frac1n\sum_{k=0}^{n-1}\widehat{\boldsymbol{x}}_k\,\widehat{\boldsymbol{y}}_{-k}.\]
\end{lem}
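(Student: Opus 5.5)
We prove the Parseval-type identity of Lemma \ref{lem3-3} by substituting the inverse DFT for one of the two vectors and then using the orthogonality of characters of \(\mathbb Z/n\mathbb Z\).

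First, write \(y_j=\operatorname{IDFT}(\widehat{\boldsymbol y})_j=\frac1n\sum_{k}\widehat{\boldsymbol y}_k\omega^{-jk}\). Substituting this into \((\boldsymbol x,\boldsymbol y)=\sum_j x_jy_j\) and exchanging the finite sums gives
\[(\boldsymbol x,\boldsymbol y)=\frac1n\sum_{k=0}^{n-1}\widehat{\boldsymbol y}_k\sum_{j=0}^{n-1}x_j\omega^{-jk}.\]
The inner sum is exactly \(\widehat{\boldsymbol x}_{-k}\), with the index read modulo \(n\); this is legitimate because \(\omega^n=1\). We therefore obtain
\[(\boldsymbol x,\boldsymbol y)=\frac1n\sum_{k}\widehat{\boldsymbol x}_{-k}\widehat{\boldsymbol y}_k.\]
Reindexing by \(k\mapsto -k\), which is a bijection of \([n]\), yields the stated form \(\frac1n\sum_k\widehat{\boldsymbol x}_k\widehat{\boldsymbol y}_{-k}\).

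The only point that needs justification is the inversion formula we used. Its validity rests on the orthogonality relation
\[\sum_{k=0}^{n-1}\omega^{ks}=\begin{cases} n, & s\equiv 0 \pmod n,\\ 0, & \text{otherwise.}\end{cases}\]
For \(s\not\equiv0\) we have \(\omega^s\neq1\), since \(\omega\) is a primitive \(n\)-th root of unity, so the geometric sum equals \((\omega^{sn}-1)/(\omega^s-1)=0\). For \(s\equiv0\) every term is \(1\) and the sum is \(n\). Because \(\gcd(n,q)=1\), the element \(n\) is nonzero in \(\mathbb F_q\), so dividing by \(n\) is allowed.

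As an independent check that avoids relying on the inversion formula, we can expand the right-hand side directly:
\[\frac1n\sum_k\Big(\sum_j x_j\omega^{jk}\Big)\Big(\sum_l y_l\omega^{-lk}\Big)=\frac1n\sum_{j,l}x_jy_l\sum_k\omega^{(j-l)k}.\]
By the orthogonality relation above, the innermost sum is \(n\delta_{jl}\), so the whole expression collapses to \(\sum_j x_jy_j\). This route is self-contained. The only point requiring care is that the values lie in \(\mathbb F_{q^m}\), whereas the result \((\boldsymbol x,\boldsymbol y)\) lies in \(\mathbb F_q\); this causes no difficulty because all computations take place in \(\mathbb F_{q^m}\supseteq\mathbb F_q\).
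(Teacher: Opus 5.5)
Your proof is correct: the inverse-DFT substitution and the direct expansion via $\sum_{k}\omega^{(j-l)k}=n\delta_{jl}$ both give the identity, and dividing by $n$ is legitimate because $\gcd(n,q)=1$. The paper does not prove this lemma (it is quoted from Li--Yuan). Your direct expansion is the same computation the paper uses to prove Lemma~\ref{lem3-7}, and the statement is the case $\ell=0$ of that lemma, since $(\boldsymbol x,\boldsymbol y)=\widehat{(\boldsymbol x\odot\boldsymbol y)}_0=\frac1n\sum_{a}\widehat{\boldsymbol x}_a\widehat{\boldsymbol y}_{-a}$.
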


For \(j\in[n]\), the \(q\)-cyclotomic coset of \(j\) modulo \(n\) is defined by
\[C_j=\{j,jq,jq^2,\ldots,jq^{m_j-1}\}\pmod n,\]
where \(m_j\) is the smallest positive integer such that \(jq^{m_j}\equiv j\pmod n.\)

Since \(\gcd(n,q)=1\), the polynomial \(X^n-1\) is square-free over \(\mathbb F_q\). Write its irreducible factorization over \(\mathbb F_q[X]\) as
\[X^n-1=f_1(X)f_2(X)\cdots f_r(X).\]
Each irreducible factor \(f_s(X)\) corresponds to a unique \(q\)-cyclotomic coset. More precisely, if \(f_s(\omega^j)=0\), then the roots of \(f_s(X)\) are exactly \(\{\omega^k:k\in C_j\}.\) We denote this cyclotomic coset by \(C(s)\).

\begin{lem}\cite{Li-Yuan1}\label{lem3-4}
We have the direct sum decomposition
\[\mathbb F_q^n=W_1\oplus W_2\oplus\cdots\oplus W_r,\]
where
\(W_i=\ker f_i(\tau)=\{\boldsymbol{x}\in\mathbb F_q^n :	f_i(\tau)(\boldsymbol{x})=\boldsymbol{0}\}.\)
\end{lem}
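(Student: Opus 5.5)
We need to prove Lemma \ref{lem3-4}: $\mathbb F_q^n=W_1\oplus\cdots\oplus W_r$ with $W_i=\ker f_i(\tau)$. The plan is to use the primary decomposition for the linear operator $\tau$.

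First, $\tau^n=\mathrm{id}$ on $\mathbb F_q^n$, so $\tau$ is annihilated by $X^n-1=f_1(X)\cdots f_r(X)$. Since $\gcd(n,q)=1$, the factors $f_i$ are pairwise distinct irreducibles, hence pairwise coprime. Set $F_i(X)=(X^n-1)/f_i(X)$. Then $\gcd(F_1,\dots,F_r)=1$, because no $f_j$ divides all $F_i$ (it fails to divide $F_j$). By Bézout there are $a_i(X)\in\mathbb F_q[X]$ with $\sum_i a_i(X)F_i(X)=1$.

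Next, define $E_i=a_i(\tau)F_i(\tau)$. Substituting $\tau$ gives $\sum_i E_i=\mathrm{id}$. We then check the following.

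\begin{itemize}
\item[(a)] $\operatorname{Im}E_i\subseteq W_i$. This holds because $f_i(\tau)E_i=a_i(\tau)(\tau^n-1)=0$.
\item[(b)] Every $\boldsymbol x$ can be written as $\boldsymbol x=\sum_i E_i\boldsymbol x$. By (a) this lies in $W_1+\cdots+W_r$.
\item[(c)] The sum is direct. Suppose $\sum_i \boldsymbol w_i=\boldsymbol 0$ with $\boldsymbol w_i\in W_i$. For $j\ne i$ we have $f_i\mid F_j$, so $E_j\boldsymbol w_i=\boldsymbol 0$. Hence $E_i\boldsymbol w_i=\boldsymbol w_i$, since $\boldsymbol w_i=\sum_j E_j\boldsymbol w_i$. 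Applying $E_i$ to the relation $\sum_j\boldsymbol w_j=\boldsymbol 0$ then gives $\boldsymbol w_i=\boldsymbol 0$.
\end{itemize}

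This completes the argument. Alternatively, one could identify $\mathbb F_q^n$ with $R_n$ via $\theta$, with $\tau$ acting as multiplication by $X$, and invoke the Chinese remainder theorem $R_n\cong\prod_i\mathbb F_q[X]/(f_i)$. Under this isomorphism $W_i$ corresponds to the ideal $\langle F_i(X)\rangle$.

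There is no real obstacle. The only point needing care is that the $f_i$ are pairwise coprime, which uses square-freeness of $X^n-1$. That in turn follows from $\gcd(X^n-1,nX^{n-1})=1$ when $p\nmid n$.
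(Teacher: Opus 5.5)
Your proof is correct and complete. The idempotents $E_i=a_i(\tau)F_i(\tau)$ satisfy $\sum_i E_i=\mathrm{id}$ and $f_i(\tau)E_i=0$, which gives spanning. For directness you use $E_i\boldsymbol w_j=\boldsymbol 0$ for $j\neq i$, which is your divisibility $f_j\mid F_i$ with the indices renamed. Square-freeness via $\gcd(X^n-1,nX^{n-1})=1$ is also handled correctly.

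The paper does not prove this lemma; it quotes it from Li and Yuan's DFT paper. So there is no internal proof to match, but a comparison with the DFT framework of this section is still useful. There the natural route runs as follows:
\begin{itemize}
\item By Lemma~\ref{lem3-2} and linearity, $\widehat{f_i(\tau)\boldsymbol x}_k=f_i(\omega^k)\widehat{\boldsymbol x}_k$. Hence $W_i$ is exactly the set of $\boldsymbol x$ with $\operatorname{supp}(\widehat{\boldsymbol x})\subseteq C(i)$.
\item Restrict $\widehat{\boldsymbol x}$ to each coset $C(1),\dots,C(r)$. Each restriction still satisfies the condition of Lemma~\ref{lem3-1}, since every coset is stable under multiplication by $q$. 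This yields the decomposition.
\item Combined with Lemma~\ref{lem3-5}, the same computation gives Lemma~\ref{lem3-6}, which is what the later sections actually use.
\end{itemize}

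Your argument is more elementary. It needs no extension field $\mathbb F_{q^m}$, no primitive $n$-th root of unity and no Fourier machinery. It uses only that $\tau$ is annihilated by a product of pairwise coprime polynomials, so it works verbatim for any linear operator in that situation.

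Your second description, $\theta(W_i)=\langle (X^n-1)/f_i(X)\rangle$ via the Chinese remainder theorem, is worth keeping. It is exactly the identification the paper uses without comment in Examples~\ref{Exmp3.1}, \ref{Exmp3.3} and~\ref{exmp3.7}, where $\ker f(\tau)$ is rewritten as the ideal generated by $(X^n-1)/f(X)$.
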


If \(f_i(\tau)(\boldsymbol{x})=\boldsymbol{0}\), then \(f_i(X)\) is an annihilating polynomial of \(\boldsymbol{x}\), and \(\boldsymbol{x}\in W_i\). For \(\boldsymbol{x}\in\mathbb F_q^n\), we define the support of its DFT by
\[\operatorname{supp}(\widehat{\boldsymbol{x}})=\{k\in[n]:\widehat{\boldsymbol{x}}_k\ne 0\}.\]
The DFT support will play an important role in the following arguments. We shall use the following two elementary properties.

\begin{lem}\cite{Li-Yuan1}\label{lem3-5}
For every \(\boldsymbol{x}\in\mathbb F_q^n\), the set \(\operatorname{supp}(\widehat{\boldsymbol{x}})\) is a union of \(q\)-cyclotomic cosets modulo \(n\).
\end{lem}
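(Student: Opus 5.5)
The plan is to combine Lemma \ref{lem3-1} with Lemma \ref{lem3-2}: the Frobenius-type constraint on the DFT of an $\mathbb F_q$-vector forces the vanishing of one coordinate to propagate along its whole cyclotomic coset. Fix $\boldsymbol{x}\in\mathbb F_q^n$ and write $\boldsymbol{y}=\widehat{\boldsymbol{x}}$. By Lemma \ref{lem3-1}, $y_k^q=y_{qk}$ for every $k\in[n]$, where indices are read modulo $n$.

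We show that for every $k$, the index $k$ lies in $\operatorname{supp}(\widehat{\boldsymbol{x}})$ exactly when $qk$ does. The key point is that the Frobenius map $t\mapsto t^q$ is injective on $\mathbb F_{q^m}$ and fixes $0$. Hence $y_k\neq 0$ if and only if $y_k^q\neq 0$, which by the relation above is equivalent to $y_{qk}\neq 0$.

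Iterating this equivalence, $k\in\operatorname{supp}(\widehat{\boldsymbol{x}})$ implies $kq^t\in\operatorname{supp}(\widehat{\boldsymbol{x}})$ for all $t\ge 0$. So whenever $k$ belongs to the support, the entire coset $C_k=\{k,kq,\ldots,kq^{m_k-1}\}$ is contained in it.

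The cosets $C_j$ partition $[n]$: multiplication by $q$ is a permutation of $\mathbb Z/n\mathbb Z$ because $\gcd(n,q)=1$, and the cosets are exactly its orbits. The support is therefore the union of the cosets $C_k$ over $k\in\operatorname{supp}(\widehat{\boldsymbol{x}})$, which is the claim.

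The argument relies entirely on Lemma \ref{lem3-1}. The only point needing care is that $qk$ is taken modulo $n$ consistently with the indexing of the DFT, which holds because $\omega^n=1$.
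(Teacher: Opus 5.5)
Your argument is correct and is the standard one. The paper does not prove this lemma itself but cites it from \cite{Li-Yuan1}, and it follows exactly as you do: apply the conjugacy relation $\widehat{\boldsymbol{x}}_k^{\,q}=\widehat{\boldsymbol{x}}_{qk}$ of Lemma~\ref{lem3-1}, use that $t^q=0$ only for $t=0$, and note that $\operatorname{supp}(\widehat{\boldsymbol{x}})=\bigcup_{k\in\operatorname{supp}(\widehat{\boldsymbol{x}})}C_k$. Despite your opening sentence, Lemma~\ref{lem3-2} is never actually used.
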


\begin{lem}\cite{Li-Yuan1}\label{lem3-6}
For \(1\le i\le r\), if \(\boldsymbol{x}\in W_i\) and \(\boldsymbol{x}\ne\boldsymbol{0}\), then
\(\operatorname{supp}(\widehat{\boldsymbol{x}})=C(i).\)
More generally, suppose
\(\boldsymbol{x}\in W_{i_1}\oplus W_{i_2}\oplus\cdots\oplus W_{i_s},\)
and write
\(\boldsymbol{x}=\boldsymbol{\alpha}_1+\boldsymbol{\alpha}_2+\cdots+\boldsymbol{\alpha}_s,\boldsymbol{\alpha}_t\in W_{i_t}, \boldsymbol{\alpha}_t\ne \boldsymbol{0}.\)
Then
\[\operatorname{supp}(\widehat{\boldsymbol{x}})=\bigcup_{t=1}^s C(i_t).\]
\end{lem}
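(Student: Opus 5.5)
The approach is to compare DFT supports. Write $\boldsymbol{x}\in W_{i_1}\oplus\cdots\oplus W_{i_s}$ with every component nonzero, and write $\widehat{\boldsymbol{x}}$ in terms of these components.

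The first step uses linearity of the DFT: $\widehat{\boldsymbol{x}}=\sum_{t=1}^s\widehat{\boldsymbol{\alpha}_t}$. By the first assertion, applied to each component, $\operatorname{supp}(\widehat{\boldsymbol{\alpha}_t})=C(i_t)$.

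The second step uses that the cyclotomic cosets $C(i_1),\dots,C(i_s)$ are pairwise disjoint, since distinct irreducible factors $f_{i}$ have disjoint root sets $\{\omega^k:k\in C(i)\}$. Hence for $k\in C(i_t)$ we get $\widehat{\boldsymbol{x}}_k=\widehat{\boldsymbol{\alpha}_t}_k\neq 0$, while for $k$ outside the union every summand vanishes. This gives the union formula immediately.

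The real content is therefore the first assertion: for $\boldsymbol{0}\neq\boldsymbol{x}\in W_i$, show $\operatorname{supp}(\widehat{\boldsymbol{x}})=C(i)$.

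\emph{Containment $\subseteq$.} Apply Lemma \ref{lem3-2} repeatedly to get $\widehat{f_i(\tau)\boldsymbol{x}}_k=f_i(\omega^k)\widehat{\boldsymbol{x}}_k$. Since $f_i(\tau)\boldsymbol{x}=\boldsymbol{0}$, this gives $f_i(\omega^k)\widehat{\boldsymbol{x}}_k=0$ for all $k$. When $k\notin C(i)$, $\omega^k$ is not a root of $f_i$, so $\widehat{\boldsymbol{x}}_k=0$.

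\emph{Containment $\supseteq$.} The support is nonempty, because the IDFT is injective and $\boldsymbol{x}\neq\boldsymbol{0}$. By Lemma \ref{lem3-5} it is a union of cyclotomic cosets. Combined with $\operatorname{supp}(\widehat{\boldsymbol{x}})\subseteq C(i)$, this forces the support to equal $C(i)$.

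Alternatively, the relation $y_k^q=y_{qk}$ from Lemma \ref{lem3-1} shows directly that one nonzero entry in $C(i)$ propagates to all of $C(i)$, since Frobenius is injective.

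The main obstacle I expect is verifying the identity $\widehat{g(\tau)\boldsymbol{x}}_k=g(\omega^k)\widehat{\boldsymbol{x}}_k$ for polynomials $g$. This follows from Lemma \ref{lem3-2} by induction on powers of $\tau$ together with linearity of the DFT.
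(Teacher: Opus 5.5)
Your proof is correct. Note that the paper does not prove this lemma; it quotes it from \cite{Li-Yuan1}, so there is no internal argument to compare against.

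Your route is the standard one and uses only facts the paper has already recorded. The identity $\widehat{f_i(\tau)\boldsymbol{x}}_k=f_i(\omega^k)\widehat{\boldsymbol{x}}_k$, obtained from Lemma~\ref{lem3-2} and linearity, gives the inclusion $\subseteq C(i)$. Nonemptiness of the support together with Lemma~\ref{lem3-5} (or the Frobenius relation of Lemma~\ref{lem3-1}) then forces equality. Disjointness of the cosets $C(i_t)$ yields the union formula.

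The one hypothesis you use tacitly is that $i_1,\dots,i_s$ are distinct. This is implicit in the direct-sum notation, and it is genuinely needed: otherwise $\boldsymbol{\alpha}_1=-\boldsymbol{\alpha}_2\neq\boldsymbol{0}$ in a single $W_i$ would give $\boldsymbol{x}=\boldsymbol{0}$ with empty support.
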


In addition, we include a lemma that will be needed later.
\begin{lem}\label{lem3-7}
Assume \(\gcd(n,q)=1\). For $\boldsymbol{x}=(x_0,\ldots,x_{n-1}), \boldsymbol{y}=(y_0,\ldots,y_{n-1})\in \mathbb F_q^n,$ define their coordinatewise product by
\[\boldsymbol{x}\odot\boldsymbol{y}=(x_0y_0,x_1y_1,\ldots,x_{n-1}y_{n-1}).\]
Then for every \(\ell\in[n]\), we have
\[\widehat{\boldsymbol{x}\odot\boldsymbol{y}}_{\ell}=\frac1n \sum_{\substack{a,b\in[n]\\ a+b\equiv \ell \pmod n}}\widehat{\boldsymbol{x}}_{a}\,\widehat{\boldsymbol{y}}_{b}.\]
\end{lem}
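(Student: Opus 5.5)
We prove Lemma~\ref{lem3-7} by a direct computation: substitute the inverse DFT for \(\boldsymbol{x}\) and \(\boldsymbol{y}\) into the definition of \(\widehat{\boldsymbol{x}\odot\boldsymbol{y}}_\ell\), exchange the order of summation, and evaluate the resulting geometric sum of roots of unity. Since \(\gcd(n,q)=1\), the element \(n\) is invertible in \(\mathbb F_q\), and the inversion formula \(x_j=\frac1n\sum_{a}\widehat{\boldsymbol{x}}_a\omega^{-ja}\) is available. The plan is to apply the inversion formula to only one factor, since this avoids an extra factor of \(\frac1n\).

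\textbf{Step 1 (expand one factor).} Start from
\[\widehat{\boldsymbol{x}\odot\boldsymbol{y}}_\ell=\sum_{j=0}^{n-1}x_jy_j\omega^{j\ell}.\]
Replace \(x_j\) by \(\frac1n\sum_{a\in[n]}\widehat{\boldsymbol{x}}_a\omega^{-ja}\) and interchange the two finite sums. This gives
\[\frac1n\sum_{a\in[n]}\widehat{\boldsymbol{x}}_a\sum_{j=0}^{n-1}y_j\omega^{j(\ell-a)}.\]

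\textbf{Step 2 (identify the inner sum).} The inner sum is, by definition, \(\widehat{\boldsymbol{y}}_{b}\) with \(b\equiv \ell-a \pmod n\). Here we use \(\omega^n=1\), so that exponents of \(\omega\) may be read modulo \(n\). Hence
\[\widehat{\boldsymbol{x}\odot\boldsymbol{y}}_\ell=\frac1n\sum_{a\in[n]}\widehat{\boldsymbol{x}}_a\widehat{\boldsymbol{y}}_{\ell-a}.\]
This is exactly the sum over pairs \(a,b\in[n]\) with \(a+b\equiv\ell\pmod n\), because for each \(a\) there is exactly one such \(b\).

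\textbf{Alternative route.} One could instead expand both factors. Then one needs the orthogonality relation \(\sum_{j}\omega^{j(\ell-a-b)}=n\) or \(0\), according as \(a+b\equiv\ell\) or not. This relation holds because \(\omega^{c}\neq1\) for \(c\not\equiv0\), so the geometric sum \(\frac{\omega^{cn}-1}{\omega^c-1}\) vanishes. The two factors \(\frac1n\frac1n\cdot n\) then again produce the single factor \(\frac1n\).

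The only point needing care is the bookkeeping of indices modulo \(n\) and the invertibility of \(n\) in characteristic \(p\). There is no genuine obstacle.
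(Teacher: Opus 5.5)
Your proof is correct and is essentially the paper's argument: the paper substitutes the inverse DFT for both $x_j$ and $y_j$ and then applies the orthogonality relation $\sum_{j}\omega^{j(\ell-a-b)}\in\{0,n\}$, which is exactly your ``alternative route.'' Your main route expands only $x_j$ and recognizes the inner sum as $\widehat{\boldsymbol y}_{\ell-a}$. It is a slightly shorter variant, because the orthogonality relation is already built into the inversion formula you invoke.
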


\begin{proof}
By the inverse DFT formula, for every \(j\in[n]\) we have
\[x_j=\frac1n\sum_{a=0}^{n-1}\widehat{\boldsymbol{x}}_a\omega^{-ja}, \qquad y_j=\frac1n\sum_{b=0}^{n-1}\widehat{\boldsymbol{y}}_b\omega^{-jb}.\]
Therefore,
\[\begin{aligned}
\widehat{\boldsymbol{x}\odot\boldsymbol{y}}_{\ell}
&=\sum_{j=0}^{n-1}x_jy_j\omega^{j\ell} =\sum_{j=0}^{n-1}
\left(\frac1n\sum_{a=0}^{n-1}\widehat{\boldsymbol{x}}_a\omega^{-ja}\right)
\left(\frac1n\sum_{b=0}^{n-1}\widehat{\boldsymbol{y}}_b\omega^{-jb}\right)
\omega^{j\ell} =\frac1{n^2}
\sum_{a=0}^{n-1}\sum_{b=0}^{n-1}
\widehat{\boldsymbol{x}}_a\widehat{\boldsymbol{y}}_b
\sum_{j=0}^{n-1}\omega^{j(\ell-a-b)}.
\end{aligned}\]
Since \(\omega\) is a primitive \(n\)-th root of unity, we have
\[\sum_{j=0}^{n-1}\omega^{j(\ell-a-b)}=
\begin{cases}
n, & a+b\equiv \ell \pmod n,\\
0, & a+b\not\equiv \ell \pmod n.
\end{cases}\]
It follows that
\[\widehat{\boldsymbol{x}\odot\boldsymbol{y}}_{\ell}=\frac1n
\sum_{\substack{a,b\in[n]\\ a+b\equiv \ell \pmod n}}\widehat{\boldsymbol{x}}_a\widehat{\boldsymbol{y}}_b.\]
\end{proof}

Using the DFT, we explicitly construct cyclic codes whose maximum weights attain the upper and lower bounds in Theorem \ref{thm3.1}.
\begin{exmp}\label{Exmp3.1}
Let \(n=q^2-1\). Then \(h_q(q^2-1)=1\) (see \cite{Cameron-Ellis-Raynaud} or Lemma \ref{lem1.1}). We construct a cyclic code attaining the lower bound in Theorem \ref{thm3.1}.
Let \(\omega\in\mathbb F_{q^2}\) be a primitive \(n\)-th root of unity. Since $\operatorname{ord}_n(q)=2$, the size of the $q$-cyclotomic coset $C_1=\{1,q\}$ is therefore $2$. Define
\[f(X)=(X-\omega)(X-\omega^q)\in\mathbb F_q[X].\]
Then \(f(X)\) is an irreducible quadratic divisor of \(X^n-1\). Let \(\mathcal C=\ker f(\tau)\subseteq\mathbb F_q^n.\) Equivalently, in \(R_n=\mathbb F_q[X]/(X^n-1)\),
\[\mathcal C=\langle g(X)\rangle,\qquad g(X)=\frac{X^n-1}{f(X)}.\]
Since \(\deg g=n-2\), we have
\(\dim \mathcal C=2.\)

We now compute its weights. For every nonzero \(\boldsymbol c\in \mathcal C\), one has $\operatorname{supp}(\widehat{\boldsymbol c})=\{1,q\}.$ Thus we may write
\[\widehat{\boldsymbol c}_1=a,\qquad \widehat{\boldsymbol c}_q=a^q,\qquad a\in\mathbb F_{q^2}^\ast,\]
and all other DFT coordinates are zero. By the IDFT,
\[c_j=\frac1n\left(a\omega^{-j}+a^q\omega^{-qj}\right),
\qquad j=0,1,\dots,n-1.\]
Hence \(c_j=0\) if and only if
\(\omega^{(q-1)j}=-a^{q-1}.\)
Now \(\omega^{q-1}\) has order \(q+1\), and the map
\(j\mapsto \omega^{(q-1)j}\)
from \(\mathbb Z/n\mathbb Z\) onto \(\langle\omega^{q-1}\rangle\) has kernel of size \(q-1\). Moreover,
\((-a^{q-1})^{q+1}=1,\)
so the above equation has exactly \(q-1\) solutions. Therefore every nonzero codeword of \(\mathcal C\) has exactly \(q-1\) zero coordinates, and hence
\[\operatorname{wt}(\boldsymbol c)=n-(q-1)=q^2-q.\]
Thus \(\mathcal C\) is a constant-weight code and
\(d(\mathcal C)=D(\mathcal C)=q^2-q.\)
Consequently,
\(\mathcal C:[q^2-1, 2, q^2-q, q^2-q]_q.\)

Finally, for \(k=2\) and \(n=q^2-1\), the lower bound in Theorem \ref{thm3.1} gives
\[\left\lceil \frac{n(q-1)q^{k-1}}{q^k-1}\right\rceil =\left\lceil\frac{(q^2-1)(q-1)q}{q^2-1}\right\rceil=q^2-q.\]
Hence this code attains the lower bound.
\end{exmp}

We next give examples that achieve the upper bound in Theorem \ref{thm3.1}. 
\begin{exmp}\label{Exmp3.2}
Set \(q=2\), and let \(n>1\) be an odd integer; then we have \(h_2(n)>0\) (see \cite{Cameron-Ellis-Raynaud} or Lemma \ref{lem1.1}).
Construct the binary even-weight cyclic code
\[\mathcal C=\big\{\boldsymbol c=(c_0,\dots,c_{n-1})\in \mathbb F_2^n : c_0+c_1+\cdots+c_{n-1}=0\big\}.\]
Since $n$ is odd, the unique full-weight vector in \(\mathbb F_2^n\) is \((1,1,\dots,1)\), which is not contained in $\mathcal C$. This yields 
\(D(\mathcal C)\le n-1.\)
On the other hand, the vector \((0,1,1,\dots,1)\) lies in the code $\mathcal C$, so
\(D(\mathcal C)=n-1.\)

In addition, it is straightforward to verify that $\mathcal C$ has dimension \(n-1\) and minimum distance \(d(\mathcal C)=2\). Therefore the parameters of $\mathcal C$ are given by
\(\mathcal C:[n,n-1,2,n-1]_2.\)
It is worth noting that this code is an MDS code.
\end{exmp}

\begin{exmp}\label{Exmp3.3}
Let \(q=2^a\), where \(a\ge 1\), and set $n=q+1.$ Then \(\operatorname{ord}_{q+1}(q)=2\). Let \(\omega\in\mathbb F_{q^2}\) be a primitive \(n\)-th root of unity. Since \(q\equiv -1\pmod{q+1}\), we have $\omega^q=\omega^{-1}.$ Consider the \(q\)-cyclotomic coset \(C_1=\{1,q\}.\) Define
\[f(X)=(X-\omega)(X-\omega^q)\in\mathbb F_q[X].\]
Then \(f(X)\) is an irreducible quadratic factor of \(X^{q+1}-1\). Let
\(\mathcal C=\ker f(\tau)\subseteq\mathbb F_q^{q+1}.\)
Equivalently, in \(R_{q+1}=\mathbb F_q[X]/(X^{q+1}-1),\) we have
\[\mathcal C=\langle g(X)\rangle,\qquad
g(X)=\frac{X^{q+1}-1}{f(X)}.\]
Since \(\deg g=q-1\), it follows that
\(\dim \mathcal{C}=(q+1)-(q-1)=2.\)

For any nonzero codeword \(\boldsymbol c\in \mathcal C\), one has \(\operatorname{supp}(\widehat{\boldsymbol c})=\{1,q\}.\) Hence we may write
\[\widehat{\boldsymbol c}_1=a,\qquad
\widehat{\boldsymbol c}_q=a^q,\qquad a\in\mathbb F_{q^2}^\ast,\]
with all other DFT coordinates equal to zero. By the IDFT,
\[c_j=\frac1n\left(a\omega^{-j}+a^q\omega^{-qj}\right).\]
Since in \(\mathbb F_q\) we have \(n=q+1\equiv 1\), and since \(\omega^q=\omega^{-1}\), this becomes
\(c_j=a\omega^{-j}+a^q\omega^j.\)
As the characteristic is \(2\), the condition \(c_j=0\) is equivalent to
\(a\omega^{-j}=a^q\omega^j,\) or \(\omega^{2j}=\frac{a}{a^q}.\)

Now \(\left(\frac{a}{a^q}\right)^{q+1}=1,\) so \(\frac{a}{a^q}\in\langle\omega\rangle\). Since \(|\langle\omega\rangle|=q+1\) is odd, the map
\(j\mapsto \omega^{2j}\)
is a bijection on \(\mathbb Z/(q+1)\mathbb Z\). Therefore the equation
\(\omega^{2j}=\frac{a}{a^q}\)
has exactly one solution. Hence every nonzero codeword of \(\mathcal C\) has exactly one zero coordinate, and so
\(\operatorname{wt}(\boldsymbol c)=q.\)
Thus \(\mathcal C\) is a constant-weight code and
\(d(\mathcal C)=D(\mathcal C)=q.\)

Consequently,
\(\mathcal C:[q+1, 2, q, q]_q.\) It is worth noting that this code is an MDS code. In particular, \(\mathcal C\) contains no full-weight codeword, so \(h_q(q+1)\ge 1\). On the other hand,
\(h_q(q+1)\le \lfloor \log_q(q+1)\rfloor=1.\)
Therefore \(h_q(q+1)=1.\) This result has not appeared in existing literature.
\end{exmp}

Finally, we give an example of odd characteristic.
\begin{exmp}\label{Exmp3.4}
Let \(q=3\) and the code length \(n=13\). We have $h_3(13)=2$ (see \cite{Cameron-Ellis-Raynaud} or Lemma \ref{lem1.1}). Construct the ring  \(R_{13} = \mathbb{F}_3[X]/(X^{13}-1),\) on which we define the cyclic code \(\mathcal C = \langle g(X) \rangle\), with generator polynomial
\[g(X)=(X^3 + 2X + 2)(X^3 + X^2 + X + 2)(X + 2) \in \mathbb{F}_3[X].\]
As \(\deg g = 7\), the dimension of $\mathcal{C}$ is 
\(\dim \mathcal{C} = 13 - 7 = 6.\)
Furthermore, enumerating all \(3^6 = 729\) codewords of $\mathcal C$ gives its weight enumerator polynomial:
\[W_{\mathcal C}(y) = 1 + 156y^6 + 494y^9 + 78y^{12}.\]
Therefore, the ternary code $\mathcal C$ has parameters \([13,6,6,12]_3\).
\end{exmp}

In Theorem \ref{thm3.1}, we derived bounds for \(D(\mathcal C)\). For the minimum distance \(d(\mathcal C)\), we have a general bound
\[2\le d(\mathcal C)\le \left\lfloor \frac{n(q - 1)q^{k-1}}{q^k - 1} \right\rfloor,\]
where \(\mathcal C\ne \mathbb{F}_q^n\). For the case \(h_q(n)=0\), we can obtain sharper bounds.

\begin{thm}\label{thm3.2}
If \(h_q(n)=0\), let \(\mathcal C\subset \mathbb{F}_q^n\) be a nonzero cyclic code such that \(\mathcal C\neq \mathbb{F}_q^n\). Let $k$ denote the dimension of $\mathcal C$. If \(k=1\), then \(d(\mathcal C)=n\); if \(k\ge 2\), we have
\[2\le d(\mathcal C)\le \left\lfloor \frac{n(q - 1)}{q} \right\rfloor.\]
\end{thm}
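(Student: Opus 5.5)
The plan is to treat the two cases separately. For \(k=1\) I use Theorem \ref{lem1} directly. For \(k\ge 2\) I use an averaging argument like the one in the proof of Theorem \ref{thm3.1}, but restricted to a well-chosen \(2\)-dimensional subspace of \(\mathcal C\).

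\emph{Case \(k=1\).} Since \(h_q(n)=0\), Theorem \ref{lem1} gives a full-weight codeword \(\boldsymbol c\in\mathcal C\). Because \(\dim\mathcal C=1\), every nonzero codeword is \(\lambda\boldsymbol c\) with \(\lambda\in\mathbb F_q^\ast\). Each such codeword has weight \(n\), so \(d(\mathcal C)=n\).

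\emph{Case \(k\ge 2\), lower bound.} Suppose \(\mathcal C\) contained a codeword of weight \(1\), say \(\lambda\boldsymbol e_i\) with \(\lambda\ne 0\). Since \(\mathcal C\) is cyclic and linear, it would contain all shifts \(\tau^j\boldsymbol e_i\). These span \(\mathbb F_q^n\), which contradicts \(\mathcal C\neq\mathbb F_q^n\). Hence \(d(\mathcal C)\ge 2\).

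\emph{Case \(k\ge 2\), upper bound.} Again take a full-weight codeword \(\boldsymbol c\in\mathcal C\) from Theorem \ref{lem1}. Since \(k\ge 2\), there is a \(\boldsymbol c'\in\mathcal C\) that is not a scalar multiple of \(\boldsymbol c\). Let \(U=\operatorname{span}\{\boldsymbol c,\boldsymbol c'\}\), a \(2\)-dimensional subcode.

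\begin{itemize}
\item Because \(c_i\neq0\) for every \(i\), each coordinate projection \(\pi_i\colon U\to\mathbb F_q\) is surjective. So exactly \(q^2-q\) vectors of \(U\) are nonzero at coordinate \(i\), and
\[\sum_{\boldsymbol u\in U}\operatorname{wt}(\boldsymbol u)=nq(q-1).\]
\item The \(q-1\) nonzero multiples of \(\boldsymbol c\) all have weight \(n\), contributing \(n(q-1)\) to this sum.
\item The remaining \(q^2-q\) nonzero vectors of \(U\), namely those outside the line \(\mathbb F_q\boldsymbol c\), therefore have total weight
\[nq(q-1)-n(q-1)=n(q-1)^2.\]
Their average weight is \(n(q-1)^2/(q^2-q)=n(q-1)/q\).
\end{itemize}

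Some nonzero codeword therefore has weight at most \(n(q-1)/q\). Since weights are integers, \(d(\mathcal C)\le\lfloor n(q-1)/q\rfloor\).

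The only step needing an idea is this upper bound. Averaging over all of \(\mathcal C\) would only reproduce the weaker bound \(\lfloor n(q-1)q^{k-1}/(q^k-1)\rfloor\). The improvement comes from removing the line through the full-weight codeword, whose existence is exactly what \(h_q(n)=0\) supplies. The rest is routine counting.
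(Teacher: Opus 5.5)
Your proposal is correct and follows essentially the paper's argument: remove the $q-1$ nonzero multiples of a full-weight codeword supplied by Theorem \ref{lem1}, and average the weights of the remaining nonzero codewords to get $n(q-1)/q$. The differences are cosmetic. You average over $U\setminus\mathbb F_q\boldsymbol c$ for a $2$-dimensional subcode $U$, while the paper averages over all of $\mathcal C\setminus\mathbb F_q\boldsymbol c$; both averages equal $n(q-1)/q$. You also prove the lower bound $d(\mathcal C)\ge 2$ via cyclic shifts of a weight-one codeword, which the paper only states as a general bound before the theorem.
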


\begin{proof}
Since \(h_q(n)=0\), every nonzero cyclic code \(\mathcal C\subset \mathbb{F}_q^n\) contains codewords of full weight. If \(k=1\), all codewords of $\mathcal C$ are full-weight, so \(d(\mathcal C)=n\). 

When \(k\ge 2\), pick a full-weight codeword \(\boldsymbol c\in \mathcal C\). Then
\[\lambda \boldsymbol c,\quad \lambda \in \mathbb{F}_q^*\]
gives at least \(q-1\) nonzero codewords of weight $n$. Removing these \(q-1\) full-weight codewords, we obtain \(q^k - q\) remaining nonzero codewords whose total weight is at most
\[\sum_{0\ne \boldsymbol x\in \mathcal C} \mathrm{wt}(\boldsymbol x) -n(q - 1)=n(q - 1)q^{k-1} - n(q - 1)= n(q - 1)\big(q^{k-1} - 1\big).\]
Hence the average weight of the leftover codewords is
\[\frac{n(q - 1)\big(q^{k-1} - 1\big)}{q^k - q} = \frac{n(q - 1)}{q}.\]
This implies there exists at least one nonzero codeword such that
\(\mathrm{wt}(\boldsymbol x) \le \frac{n(q - 1)}{q},\)
and consequently
\[d(\mathcal C) \le \left\lfloor \frac{n(q - 1)}{q} \right\rfloor.\]
\end{proof}

We now give several examples where \(d(\mathcal C)\) attains the upper bound.
\begin{exmp}\label{exmp3.5}
For any \(q \ge 3\), take \(n=3\), then \(h_q(3)=0\) (see \cite{Aaronson-Groenland-Johnston} or Lemma \ref{lem1.2}). Let
\[\mathcal C = \{(x_0,x_1,x_2) \in \mathbb{F}_q^3 : x_0 + x_1 + x_2 = 0\}.\]
This is a cyclic code of length $3$, whose parameters are
\(\mathcal C = [3,2,2,3]_q.\) It is worth noting that this code is an MDS code. In this case, the upper bound is
\[\left\lfloor \frac{3(q - 1)}{q} \right\rfloor = \left\lfloor 3 - \frac{3}{q} \right\rfloor = 2=d(\mathcal C).\]
\end{exmp}

\begin{exmp}\label{exmp3.6}
Set \(q=2\) and \(n=2^m\) with \(m\ge 2\). Then \(h_2(2^m)=0\) (see \cite{Cameron-Ellis-Raynaud} or Lemma \ref{lem1.1}).
Take the cyclic code \(\mathcal C = \langle (X + 1)^{n-2} \rangle\) inside \(R_n = \mathbb{F}_2[X]/(X^n - 1).\) Since \(X^n - 1 = (X + 1)^n,\) the dimension of this code satisfies
\(\dim \mathcal{C} = n - (n - 2) = 2.\)

Moreover,
\[(X + 1)^{n-2} = 1 + X^2 + X^4 + \dots + X^{n-2},\]
so the weight of its corresponding codeword is \(\frac n2\). Meanwhile,
\[X(X + 1)^{n-2} = X + X^3 + \dots + X^{n-1}\]
also has weight \(\frac n2\), and their sum is the all-one vector of weight $n$. Hence \(D(\mathcal C)=n\). The upper bound here equals
\[\left\lfloor \frac{n(2 - 1)}{2} \right\rfloor = \frac{n}{2}.\]
Thus
\[d(\mathcal C) = \frac{n}{2} = \left\lfloor \frac{n(q - 1)}{q} \right\rfloor.\]
Therefore, the parameters of $\mathcal C$ are given by
\(\mathcal C = \left[n,2,\frac n2,n\right]_2.\)
\end{exmp}

\begin{exmp}\label{exmp3.7}
Let \(q\) be an odd prime power, and set $n=q+1.$ Then \(h_q(q+1)=0\) (see \cite{Huang} or Lemma \ref{lem1.3}). Let \(\omega\in\mathbb F_{q^2}^\ast\) be a primitive \(n\)-th root of unity. Since \(q\equiv -1\pmod{q+1}\), we have $\omega^q=\omega^{-1}.$ Consider the \(q\)-cyclotomic coset \(C_1=\{1,q\}.\) Define the polynomial
\[f(X)=(X-\omega)(X-\omega^q)\in\mathbb F_q[X].\]
This polynomial is an irreducible quadratic factor of \(X^{q+1}-1\) over \(\mathbb F_q\). We define the cyclic code
\(\mathcal C=\ker f(\tau)\subseteq \mathbb F_q^{q+1}.\)
Equivalently, in the quotient ring \(R_{q+1}=\mathbb F_q[X]/\langle X^{q+1}-1\rangle,\) the code \(\mathcal C\) is the principal ideal generated by
\[g(X)=\frac{X^{q+1}-1}{f(X)}.\]
Since \(\deg f=2\), we have \(\deg g=q-1\). Therefore
\(\dim \mathcal{C}=(q+1)-(q-1)=2.\)

We now compute its weights. Every nonzero codeword \(\boldsymbol c\in \mathcal C\) satisfies \(\operatorname{supp}(\widehat{\boldsymbol c})=\{1,q\}.\) Take any nonzero element \(a\in\mathbb F_{q^2}\), and define the DFT coordinates by
\[\widehat{\boldsymbol c}_1=a,\qquad \widehat{\boldsymbol c}_q=a^q,\qquad \widehat{\boldsymbol c}_k=0\quad\text{for all }k\notin\{1,q\}.\]
By the IDFT, we obtain
\[c_j=\frac{1}{n}\left(a\omega^{-j}+a^q\omega^{-qj}\right), \qquad j=0,1,\dots,q.\]
Since in \(\mathbb F_q\) we have \(n=q+1\equiv 1\), and since \(\omega^q=\omega^{-1}\), this simplifies to \(c_j=a\omega^{-j}+a^q\omega^j.\) The equality \(c_j=0\) is equivalent to \(a\omega^{-j}+a^q\omega^j=0,\) or equivalently, \(a\omega^{-j}=-a^q\omega^j.\) After rearranging, we get \(\omega^{2j}=-\frac{a}{a^q}.\)

Observe that \(\left(-\frac{a}{a^q}\right)^{q+1}=1.\) Hence \(-\frac{a}{a^q}\in \langle \omega\rangle.\) Now \(\langle\omega\rangle\) has order \(q+1\). Since \(q\) is odd, \(q+1\) is even, and therefore \(\gcd(2,q+1)=2.\) It follows that the map \(j\mapsto \omega^{2j}\) from \(\mathbb Z/(q+1)\mathbb Z\) to \(\langle\omega\rangle\) has kernel of size \(2\), and its image is the index-two subgroup \(\langle\omega^2\rangle\subseteq \langle\omega\rangle.\)

Consequently, the equation \(\omega^{2j}=-\frac{a}{a^q}\) has either no solution or exactly two solutions in \(\mathbb Z/(q+1)\mathbb Z\). Thus every nonzero codeword \(\boldsymbol c\in \mathcal C\) has either no zero coordinate or exactly two zero coordinates. Therefore the possible nonzero weights of \(\mathcal C\) are \(q+1\) and \(q-1.\)

Both weights actually occur. Indeed, the map \(\mathbb F_{q^2}^\ast\longrightarrow \langle\omega\rangle, a\longmapsto \frac{a}{a^q}\) is surjective: its kernel is \(\mathbb F_q^\ast\), so its image has size
\[\frac{q^2-1}{q-1}=q+1.\]
Hence \(-\frac{a}{a^q}\) can be chosen either inside \(\langle\omega^2\rangle\) or outside \(\langle\omega^2\rangle\). In the first case, the corresponding codeword has exactly two zero coordinates and hence weight \(q-1\). In the second case, it has no zero coordinate and hence weight \(q+1\).

We conclude that \(d(\mathcal C)=q-1, D(\mathcal C)=q+1.\) Therefore the parameters of this cyclic code are
\(\mathcal C:[q+1,2,q-1,q+1]_q.\)
It is worth noting that this code is an NMDS code. Finally,
\[\left\lfloor \frac{n(q-1)}{q}\right\rfloor=\left\lfloor \frac{(q+1)(q-1)}{q}\right\rfloor=\left\lfloor q-\frac{1}{q}\right\rfloor=q-1=d(\mathcal C).\]
This shows that the upper bound for \(d(\mathcal C)\) in the case \(h_q(n)=0\) is sharp.
\end{exmp}

Finally, we present an example where \(d(\mathcal C)\) achieves the lower bound $2$ stated in Theorem \ref{thm3.2}.
\begin{exmp}\label{exmp3.8}
Let \(q\) be an odd prime power, and set \(n=q+1\). Then \(h_q(q+1)=0\) (see \cite{Huang} or Lemma \ref{lem1.3}). Consider the cyclic code \(\mathcal C=\langle X-1\rangle\subseteq R_n=\mathbb F_q[X]/\langle X^n-1\rangle .\)
Equivalently,
\[\mathcal C=\left\{(c_0,\ldots,c_{n-1})\in\mathbb F_q^n:\sum_{i=0}^{n-1}c_i=0\right\}.\]
Since \(\deg(X-1)=1\), we have \(\dim \mathcal{C}=n-1=q.\)

We now compute its minimum distance. The codeword corresponding to \(X-1\) is \((-1,1,0,\ldots,0),\) which has weight \(2\). Therefore \(d(\mathcal C)=2.\) Moreover, \(\mathcal C\) contains a full-weight codeword. Since \(n=q+1\) is even, the vector \((1,-1,1,-1,\ldots,1,-1)\) belongs to \(\mathcal C\) and has weight \(n\). Hence \(D(\mathcal C)=n=q+1.\) 

Thus, the parameters of this cyclic code are \(\mathcal C:[q+1, q, 2, q+1]_q.\) It is worth noting that this code is an MDS code.
\end{exmp}

It is worth mentioning that Examples \ref{Exmp3.1} and \ref{Exmp3.2} illustrate that \(d(\mathcal C)\) can achieve the general upper and lower bounds when \(h_q(n)>0\).

\section{Cyclic Codes Containing No Full-Weight Codewords}
In Theorem 2.1, we established a connection between cyclically covering subspaces of \(\mathbb F_q^n\) and the existence of full-weight codewords in cyclic codes over \(\mathbb F_q\). This motivates us to study cyclic codes containing no full-weight codeword. For brevity, let
\[\mathcal N(n,q)=\{\mathcal C\subseteq \mathbb F_q^n : \mathcal C \text{ is a cyclic code and } \mathcal C \text{ contains no full-weight codeword}\}.\]
Equivalently, \(\mathcal C\in\mathcal N(n,q)\) if and only if \(D(\mathcal C)<n\). We use \(\mathcal C_0\) to denote the zero cyclic code. Then \(\mathcal C_0\in\mathcal N(n,q)\), and by Theorem 2.1 we have
\[h_q(n)=0 \Longleftrightarrow |\mathcal N(n,q)|=1,\]
and
\[h_q(n)\ge 1 \Longleftrightarrow |\mathcal N(n,q)|\ge 2.\]

In the general case, it is difficult to compute \(|\mathcal N(n,q)|\), the number of cyclic codes containing no full-weight codeword. However, for $q=2$, we have the following result.

\begin{thm}\label{thm3.3}
Set \(q=2\). Let \(n=2^s m\), with $m$ odd and $s$ a non-negative integer. Then
$$|\mathcal{N}(n,2)|=(2^s+1)^r,$$
where \(r=\sum_{\substack{d\mid m\\d>1}}\frac{\varphi(d)}{\mathrm{ord}_d(2)}.\)
\end{thm}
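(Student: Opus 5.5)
Over $\mathbb F_2$ the only full-weight vector in $\mathbb F_2^n$ is the all-one vector $\boldsymbol 1=(1,1,\ldots,1)$. So $\mathcal C\in\mathcal N(n,2)$ if and only if $\boldsymbol 1\notin\mathcal C$. The plan is to turn this into a divisibility condition on the generator polynomial of $\mathcal C$, and then count generator polynomials factor by factor.

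\textbf{Factorization of $X^n-1$.} Since $n=2^s m$ with $m$ odd, we have $X^n-1=(X^m-1)^{2^s}$. The polynomial $X^m-1$ is square-free and splits as $(X+1)\prod_{i=1}^{r'} f_i(X)$ with the $f_i\neq X+1$ distinct and irreducible. The irreducible factors of $X^m-1$ correspond to the $2$-cyclotomic cosets modulo $m$. Grouping the elements of $\mathbb Z/m\mathbb Z$ by their order $d\mid m$, the elements of order $d$ form $\varphi(d)/\mathrm{ord}_d(2)$ cosets. The coset $\{0\}$, i.e.\ $d=1$, gives the factor $X+1$. Hence $r'=\sum_{d\mid m,\,d>1}\varphi(d)/\mathrm{ord}_d(2)=r$.

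\textbf{Cyclic codes and the all-one vector.} Every cyclic code is $\mathcal C=\langle g(X)\rangle$ for a unique monic $g\mid X^n-1$. Write
\[g(X)=(X+1)^{t}\prod_{i=1}^{r} f_i(X)^{s_i},\qquad 0\le t,s_i\le 2^s ;\]
the zero code is $g=X^n-1$. I will use the standard fact that for $g\mid X^n-1$ and $\deg v<n$, the polynomial $v(X)$ lies in $\langle g\rangle$ if and only if $g\mid v$ in $\mathbb F_2[X]$. Indeed, if $v\equiv g u \pmod{X^n-1}$ then $v=gu+(X^n-1)w$, which is divisible by $g$. The all-one vector corresponds to
\[\theta(\boldsymbol 1)=\frac{X^n-1}{X+1}=(X+1)^{2^s-1}\prod_{i=1}^{r}f_i(X)^{2^s}.\]
Therefore $\boldsymbol 1\in\mathcal C$ if and only if $t\le 2^s-1$, and the exponents $s_i$ impose no constraint.

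\textbf{Counting.} By the previous step, $\mathcal C\in\mathcal N(n,2)$ if and only if $t=2^s$, with each $s_i\in\{0,1,\ldots,2^s\}$ free. This gives exactly $(2^s+1)^r$ codes, including $\mathcal C_0$ (all $s_i=2^s$). As a sanity check, $m=1$ gives $r=0$ and $|\mathcal N|=1$, consistent with $h_2(2^s)=0$.

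\textbf{Expected obstacle.} The only step needing care is the membership criterion "$v\in\langle g\rangle\iff g\mid v$" when $X^n-1$ is not square-free (the case $s>0$). As shown above, it follows directly from $g\mid X^n-1$.
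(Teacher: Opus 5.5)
Your proposal is correct and follows essentially the same route as the paper: reduce to $\boldsymbol 1\notin\mathcal C$, factor $\theta(\boldsymbol 1)=(X+1)^{2^s-1}\prod_{i} f_i(X)^{2^s}$, observe that membership fails exactly when the exponent of $X+1$ in the generator polynomial equals $2^s$, and count the $(2^s+1)^r$ free choices of the remaining exponents. The only differences are cosmetic: you count $r$ via $2$-cyclotomic cosets grouped by order rather than via the irreducible factors of $\Phi_d(X)$, and you explicitly justify the criterion $v\in\langle g\rangle\iff g\mid v$ for $g\mid X^n-1$, which the paper uses without comment.
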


\begin{proof}
Let $R_n=\mathbb F_2[X]/(X^n-1).$ As \(\mathbb{F}_2^* = \{1\}\), the unique full-weight codeword contained in \(R_n\) is the all-one vector
\(\boldsymbol 1=(1,1,\ldots,1).\)
The vector \(\boldsymbol 1\) corresponds to the polynomial \(S_n(X)=1+X+\cdots+X^{n-1}.\) Hence, for a binary cyclic code \(\mathcal C\), we have
\[D(\mathcal C)=n \Longleftrightarrow \boldsymbol 1\in \mathcal C.\]
Equivalently,
\[\mathcal C\in \mathcal N(n,2) \Longleftrightarrow \boldsymbol 1\notin \mathcal C.\]
Since the characteristic is \(2\), we have \(X^n-1=X^{2^s m}-1=(X^m-1)^{2^s}.\)
Because \(m\) is odd, \(X^m-1\) is square-free over \(\mathbb F_2\). Factor it as \(X^m-1=(X+1)f_1(X)f_2(X)\cdots f_r(X),\) where \(f_1,\ldots,f_r\) are the distinct monic irreducible factors different from \(X+1\). Thus
\[X^n-1=(X+1)^{2^s}f_1(X)^{2^s}\cdots f_r(X)^{2^s}.\]

Every cyclic code of length \(n\) over \(\mathbb F_2\) can be uniquely written as
\[\mathcal C_{\mathbf e}=\left\langle (X+1)^{e_0}f_1(X)^{e_1}\cdots f_r(X)^{e_r}\right\rangle\subseteq R_n,\]
where \(0\le e_i\le 2^s, 0\le i\le r.\) Since \(S_n(X)=1+X+\cdots+X^{n-1}=\frac{X^n-1}{X+1},\) we obtain
\[S_n(X)=(X+1)^{2^s-1}f_1(X)^{2^s}\cdots f_r(X)^{2^s}.\]
Set
\[g_{\mathbf e}(X)=(X+1)^{e_0}f_1(X)^{e_1}\cdots f_r(X)^{e_r}.\]
Because \(g_{\mathbf e}(X)\mid X^n-1\), we have
\[S_n(X)\in \langle g_{\mathbf e}(X)\rangle\Longleftrightarrow g_{\mathbf e}(X)\mid S_n(X).\]

From the explicit factorization of \(S_n(X)\), the divisibility \(g_{\mathbf e}(X)\mid S_n(X)\) fails if and only if \(e_0=2^s.\) Therefore,
\[\mathcal C_{\mathbf e}\in \mathcal N(n,2)\Longleftrightarrow e_0=2^s.\]
Each \(e_i\) (for \(1\le i\le r\)) has \(2^s+1\) choices. Hence there are \((2^s+1)^r\) choices for \((e_1,\ldots,e_r)\).
Thus
\[|\mathcal N(n,2)|=(2^s+1)^r.\]

Finally, we express \(r\) explicitly in arithmetic terms. Since \(m\) is odd,
\[X^m-1=\prod_{d\mid m}\Phi_d(X).\]
The factor \(\Phi_1(X)=X-1=X+1\) accounts for the factor \(X+1\). For each divisor \(d>1\) of \(m\), the irreducible factors of \(\Phi_d(X)\) over \(\mathbb F_2\) all have degree \(\operatorname{ord}_d(2)\). Hence \(\Phi_d(X)\) has \(\frac{\varphi(d)}{\operatorname{ord}_d(2)}\) irreducible factors over \(\mathbb F_2\). Summing over all divisors \(d>1\) of \(m\), we obtain
\[r=\sum_{\substack{d\mid m\\ d>1}}\frac{\varphi(d)}{\operatorname{ord}_d(2)}.\]
This proves the theorem.
\end{proof}

\begin{coro}\label{coro2.3}
\(h_2(n)=0\) if and only if \(n=2^s\) for some non-negative integer $s$.
\end{coro}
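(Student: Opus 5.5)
The plan is to read Corollary \ref{coro2.3} off Theorem \ref{thm3.3}, using the reformulation at the start of this section: by Theorem \ref{lem1}, \(h_2(n)=0\) holds if and only if \(|\mathcal N(n,2)|=1\), i.e. the zero code is the only cyclic code without a full-weight codeword. Write \(n=2^s m\) with \(m\) odd. Theorem \ref{thm3.3} then gives
\[|\mathcal N(n,2)|=(2^s+1)^r,\qquad r=\sum_{\substack{d\mid m\\ d>1}}\frac{\varphi(d)}{\operatorname{ord}_d(2)}.\]
Since \(2^s+1\ge 2\), we have \(|\mathcal N(n,2)|=1\) exactly when \(r=0\).

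It remains to show that \(r=0\) if and only if \(m=1\). If \(m=1\), the sum is empty, so \(r=0\). If \(m>1\), the term \(d=m\) contributes \(\varphi(m)/\operatorname{ord}_m(2)\). This is a positive integer, since \(\operatorname{ord}_m(2)\) divides \(\varphi(m)\); so \(r\ge 1\).

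A more direct argument avoids the arithmetic formula. In the proof of Theorem \ref{thm3.3}, \(r\) is the number of monic irreducible factors of \(X^m-1\) other than \(X+1\). When \(m>1\) is odd, \(X^m-1\) is squarefree of degree \(m>1\), so it must have such a factor.

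Hence \(h_2(n)=0\) if and only if \(m=1\), i.e. \(n=2^s\). This recovers Lemma \ref{lem1.1}(viii), first part. There is no real obstacle here; the only point to check is that \(r\) is positive whenever \(m>1\), and the squarefree-factor remark settles that cleanly.
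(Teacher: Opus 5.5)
Your proposal is correct and takes the same approach as the paper: reduce to $|\mathcal N(n,2)|=1$ via Theorem~\ref{lem1}, use $(2^s+1)^r=1\iff r=0$ from Theorem~\ref{thm3.3}, and check that $r>0$ exactly when $m>1$. Both of your justifications for the last step are valid. One is that the $d=m$ term is a positive integer; the other is that the squarefree $X^m-1$ must have an irreducible factor besides $X+1$. These are minor variants of the paper's remark that every summand $\varphi(d)/\operatorname{ord}_d(2)$ with $d>1$ is positive.
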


\begin{proof}
Theorem \ref{thm3.3} implies \(h_2(n) = 0\) if and only if \(|\mathcal{N}(n,2)| = 1\).
By Theorem~\ref{thm3.3}, we have
\[|\mathcal N(n,2)|=(2^s+1)^r,\]
where
\(r=\sum_{\substack{d\mid m\\ d>1}}\frac{\varphi(d)}{\operatorname{ord}_d(2)}.\)
Since \(2^s+1>1\), we have
\[|\mathcal N(n,2)|=1 \Longleftrightarrow (2^s+1)^r=1 \Longleftrightarrow r=0.\]

Now \(r=0\) if and only if \(m=1\). Indeed, if \(m=1\), then there is no divisor \(d\mid m\) with \(d>1\), so \(r=0\). Conversely, if \(m>1\), then \(m\) has at least one divisor \(d>1\). Since \(m\) is odd, we have \(\gcd(d,2)=1\), so \(\operatorname{ord}_d(2)\) is well-defined and positive. Hence
\(\frac{\varphi(d)}{\operatorname{ord}_d(2)}>0,\)
which implies \(r>0\). Thus \(r=0\) if and only if \(m=1\). Therefore,
\[h_2(n)=0\iff m=1\iff n=2^s.\]
This proves the corollary.
\end{proof}

\begin{coro}\label{coro2.4}
If \(2\) is a primitive root modulo \(n\), then  
\[|\mathcal{N}(n,2)|=2^{d(n)-1},\]  
where \(d(n)\) denotes the number of positive divisors of \(n\). In particular, if \(n\) is prime, then  
\[|\mathcal{N}(n,2)|=2.\]
\end{coro}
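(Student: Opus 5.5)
The plan is to specialize Theorem \ref{thm3.3} to this situation. The hypothesis that \(2\) is a primitive root modulo \(n\) requires \(\gcd(2,n)=1\), so \(n\) is odd (the degenerate cases \(n=1,2\) are checked by hand or excluded). In the notation of Theorem \ref{thm3.3} this gives \(s=0\) and \(m=n\), and hence
\[|\mathcal N(n,2)|=(2^0+1)^r=2^r,\qquad r=\sum_{\substack{d\mid n\\ d>1}}\frac{\varphi(d)}{\operatorname{ord}_d(2)}.\]
It then suffices to show that every summand equals \(1\), that is, \(\operatorname{ord}_d(2)=\varphi(d)\) for each divisor \(d>1\) of \(n\).

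For this step I will use the reduction homomorphism \((\mathbb Z/n\mathbb Z)^\ast\to(\mathbb Z/d\mathbb Z)^\ast\) for \(d\mid n\). It is surjective: given a unit \(a\) modulo \(d\), a Dirichlet/CRT argument produces a lift \(a+td\) coprime to \(n\). Concretely, choose \(t\) so that \(a+td\not\equiv 0\) modulo each prime dividing \(n\) but not \(d\); the primes dividing \(d\) already do not divide \(a+td\). Since \(2\) generates \((\mathbb Z/n\mathbb Z)^\ast\), its image generates \((\mathbb Z/d\mathbb Z)^\ast\), so \(\operatorname{ord}_d(2)=\varphi(d)\).

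Each divisor \(d>1\) therefore contributes exactly \(1\) to \(r\), and \(r\) equals the number of divisors of \(n\) exceeding \(1\), namely \(d(n)-1\). This gives \(|\mathcal N(n,2)|=2^{d(n)-1}\). If \(n\) is prime, then \(d(n)=2\) and \(|\mathcal N(n,2)|=2\).

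The only step needing real care is the surjectivity of the reduction map on unit groups. Alternatively, one can note that a primitive root exists only when \(n=p^k\) for an odd prime \(p\), since \(n\) is odd. The divisors are then \(p^j\), and surjectivity of \((\mathbb Z/p^k\mathbb Z)^\ast\to(\mathbb Z/p^j\mathbb Z)^\ast\) is immediate because every unit modulo \(p^j\) is itself coprime to \(p\). This route also gives \(d(n)=k+1\) and \(r=k\) explicitly. I would present this second route, since it is shorter.
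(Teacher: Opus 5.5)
Your proof is correct and follows the paper's argument. You specialize Theorem \ref{thm3.3} to \(s=0\), \(m=n\), and then use surjectivity of \((\mathbb Z/n\mathbb Z)^\ast\to(\mathbb Z/d\mathbb Z)^\ast\) to get \(\operatorname{ord}_d(2)=\varphi(d)\) for every divisor \(d>1\). Your preferred variant, which reduces to \(n=p^k\), changes only how that surjectivity is justified: the paper just asserts it for general \(d\mid n\), while you trade an elementary CRT lift for Gauss's classification of moduli with primitive roots.
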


\begin{proof}
Since \(2\) is a primitive root modulo \(n\), we have \(\gcd(2,n)=1\), hence \(n\) is odd. Write \(n=2^s m,\) with \(m\) odd. Then \(s=0\) and \(m=n\).
By Theorem \ref{thm3.3}, we have \(|\mathcal{N}(n,2)|=(2^s+1)^r,\) where \(r=\sum_{\substack{d\mid m\\ d>1}}\frac{\varphi(d)}{\operatorname{ord}_d(2)}.\)
Since \(s=0\) and \(m=n\), this becomes  
\[|\mathcal{N}(n,2)|=2^r,\]  
with \(r=\sum_{\substack{d\mid n\\ d>1}}\frac{\varphi(d)}{\operatorname{ord}_d(2)}.\)

Now, for every divisor \(d\mid n\) with \(d>1\), the reduction map \((\mathbb Z/n\mathbb Z)^* \rightarrow (\mathbb Z/d\mathbb Z)^*\) is surjective. Since \(2\) is a primitive root modulo \(n\), its image modulo \(d\) generates \((\mathbb Z/d\mathbb Z)^*\); hence \(2\) is also a primitive root modulo \(d\), and therefore  
\(\operatorname{ord}_d(2)=\varphi(d).\)
Thus  
\[r=\sum_{\substack{d\mid n\\ d>1}}1=d(n)-1.\]  
Consequently, \(|\mathcal{N}(n,2)|=2^r=2^{d(n)-1}.\) If \(n\) is prime, then \(d(n)=2\), so \(|\mathcal{N}(n,2)|=2.\) This completes the proof.
\end{proof}

In 2026, Li and Yuan \cite{Li-Yuan1} proved that if $n>3$ is prime and $\mathrm{ord}_n(3)$ is odd, then $h_3(n)\ge 1$. We can further derive the following results.

\begin{thm}\label{thm3.4}
Let $q=3$, let $n>3$ be a prime number, and suppose that $\mathrm{ord}_n(3)$ is odd. Then
$$|\mathcal{N}(n,3)| \ge 3^{\frac{r}{2}},$$
where $r = \frac{n-1}{\mathrm{ord}_n(3)}.$
\end{thm}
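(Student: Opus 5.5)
The idea is to exhibit $3^{r/2}$ distinct cyclic codes, each described by its DFT support, and to show that none of them contains a full-weight codeword. We then invoke the definition of $\mathcal N(n,3)$.

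\textbf{Pairing the cyclotomic cosets.} Since $n$ is prime and $\gcd(n,3)=1$, the nonzero residues modulo $n$ split into $r=(n-1)/\operatorname{ord}_n(3)$ cyclotomic cosets, each of size $\operatorname{ord}_n(3)$. These are the cosets of the subgroup $\langle 3\rangle\le(\mathbb Z/n\mathbb Z)^*$.

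We claim $-1\notin\langle 3\rangle$. Indeed, $-1$ has order $2$, and the order of every element of $\langle 3\rangle$ divides the odd number $\operatorname{ord}_n(3)$. Hence for every nonzero coset $C$, the coset $-C$ is different from $C$. So the $r$ nonzero cosets split into $r/2$ pairs $\{C,-C\}$, and in particular $r$ is even.

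\textbf{The codes.} For each pair we choose one of three options: take neither coset, take $C$, or take $-C$. Let $S$ be the union of the chosen cosets. This gives $3^{r/2}$ sets $S$, each satisfying
\[
0\notin S\qquad\text{and}\qquad S\cap(-S)=\emptyset .
\]
Let $\mathcal C_S$ be the direct sum of the $W_i$ with $C(i)\subseteq S$. By Lemma~\ref{lem3-4} and Lemma~\ref{lem3-6}, $\mathcal C_S$ is a cyclic code, and every $\boldsymbol x\in\mathcal C_S$ has $\operatorname{supp}(\widehat{\boldsymbol x})\subseteq S$. Distinct sets $S$ give distinct codes, because the decomposition in Lemma~\ref{lem3-4} is direct. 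The choice $S=\emptyset$ gives $\mathcal C_0$.

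\textbf{Key step: no full-weight codewords.} Suppose $\boldsymbol x\in\mathcal C_S$ had full weight. Over $\mathbb F_3$ every nonzero entry is $\pm1$, so its square is $1$, and therefore $\boldsymbol x\odot\boldsymbol x=\boldsymbol 1$ is the all-one vector. We compare the $0$-th DFT coordinate of both sides.

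For the all-one vector, $\widehat{\boldsymbol 1}_0=n$, and this is nonzero in $\mathbb F_3$ because $n$ is a prime larger than $3$.

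For the left side, Lemma~\ref{lem3-7} gives
\[
\widehat{\boldsymbol x\odot\boldsymbol x}_0=\frac1n\sum_{a\in[n]}\widehat{\boldsymbol x}_a\,\widehat{\boldsymbol x}_{-a}.
\]
Each term of this sum vanishes. If $a\notin S$, then $\widehat{\boldsymbol x}_a=0$. If $a\in S$, then $-a\notin S$ because $S\cap(-S)=\emptyset$, so $\widehat{\boldsymbol x}_{-a}=0$. The case $a=0$ is covered since $0\notin S$. Thus the left side is $0$ while the right side is $n\neq0$, a contradiction.

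Hence each $\mathcal C_S$ lies in $\mathcal N(n,3)$, and therefore $|\mathcal N(n,3)|\ge 3^{r/2}$.

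\textbf{Main obstacle.} The two points that need care are the parity argument showing that $-1\notin\langle3\rangle$ forces all $r$ cosets to pair off, and the verification that distinct supports give distinct codes. Both follow from the standard facts recalled in Section~3; the nonvanishing-square trick above is what actually drives the bound.
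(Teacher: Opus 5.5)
Your proof is correct and takes essentially the same approach as the paper. You pair each nonzero coset with its negative, take the $3^{r/2}$ supports $S$ with $S\cap(-S)=\emptyset$, and reach a contradiction from $x_i^2=1$ by computing $\sum_i x_i^2=n\neq 0$ in two ways. Your $0$-th coordinate of $\widehat{\boldsymbol x\odot\boldsymbol x}$ via Lemma~\ref{lem3-7} is the same quantity as the paper's $(\boldsymbol c,\boldsymbol c)$ via Lemma~\ref{lem3-3}, and you also justify two points the paper only asserts: that $-1\notin\langle 3\rangle$, and that distinct $S$ give distinct codes.
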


\begin{proof}
Let
\(f=\operatorname{ord}_n(3).\)
Since \(n>3\) is prime, every nonzero \(3\)-cyclotomic coset modulo \(n\) has size exactly \(f\). Hence the total number of nonzero \(3\)-cyclotomic cosets is
\(r=\frac{n-1}{f}.\)
Because \(f\) is odd, the map
\(T\longmapsto -T\)
pairs up all the nonzero \(3\)-cyclotomic cosets. Consequently, the \(r\) nonzero cosets can be partitioned into \(\frac r2\) unordered pairs:
\[\{T_1,-T_1\},\{T_2,-T_2\},\dots,\{T_{\frac r2},-T_{\frac r2}\}.\]
For each index \(i=1,\dots,\frac r2\), we have three choices:
\[\text{choose neither }T_i\text{ nor }-T_i,\qquad\text{choose only }T_i,\qquad\text{choose only }-T_i.\]
Through this construction we obtain \(3^{\frac r2}\) distinct subsets \(S\) of the family of nonzero \(3\)-cyclotomic cosets, and each subset satisfies \(S\cap(-S)=\varnothing.\)
For any such subset \(S\), let \(\mathcal C_S\) be the cyclic code over \(\mathbb{F}_3\) determined by \(S\). 

We claim that every code of the form \(\mathcal C_S\) contains no full-weight codeword.
Assume to the contrary that \(\mathcal C_S\) contains a full-weight codeword
\(\boldsymbol c=(c_0,c_1,\dots,c_{n-1}).\)
When \(q=3\), every nonzero coordinate of \(\boldsymbol c\) lies in
\(\mathbb{F}_3^*=\{\pm1\}.\)
Hence \(c_i^2=1\) for all indices \(i\), and the inner product satisfies
\[(\boldsymbol c,\boldsymbol c)=\sum_{i=0}^{n-1}c_i^2=n.\]
Since \(n>3\) is prime, \(3\nmid n\); thus in the finite field \(\mathbb{F}_3\),
\((\boldsymbol c,\boldsymbol c)=n\neq0.\)
On the other hand, because \(S\cap(-S)=\varnothing\), we have
\[(\boldsymbol c,\boldsymbol c)=\frac1n\sum_{k=0}^{n-1}\widehat{\boldsymbol c}_k\widehat{\boldsymbol c}_{-k}=0.\]
This contradicts \((\boldsymbol c,\boldsymbol c)\neq0\). Hence \(\mathcal C_S\) contains no full-weight codeword. Therefore \(\mathcal C_S\in \mathcal{N}(n,3).\)
There are \(3^{\frac r2}\) distinct choices for \(S\), each giving a distinct cyclic code containing no full-weight codeword, so we obtain the lower bound \(|\mathcal{N}(n,3)|\ge 3^{\frac r2}.\)
\end{proof}

\begin{coro}\label{coro3.3}
Let $q=3$, let $n>3$ be a prime number. If \(\operatorname{ord}_n(3)=\dfrac{n-1}{2}\) and \(\dfrac{n-1}{2}\) is odd, then 
\[|\mathcal{N}(n,3)|=3.\]
\end{coro}

\begin{proof}
Let $f=\operatorname{ord}_n(3)=\frac{n-1}{2}$; then $r=2$. By Theorem \ref{thm3.4}, we have \(|\mathcal N(n,3)| \ge 3\). It suffices to prove that there are at most two nonzero cyclic codes in \(\mathbb F_3^n\) containing no full-weight codeword. Since \(f\) is odd, there are exactly two nonzero \(3\)-cyclotomic cosets modulo \(n\), denoted by
\[C_1=\langle 3\rangle,\qquad C_2=-C_1.\]
Together with \(C_0=\{0\}\), these are all the \(3\)-cyclotomic cosets modulo \(n\).
Let \(W_0,W_1,W_2\) be the corresponding minimal cyclic codes. Then
\[\mathbb F_3^n=W_0\oplus W_1\oplus W_2,\]
where \(W_0=\langle (1,1,\ldots,1)\rangle .\) We now determine which cyclic codes can contain no full-weight codeword. Any cyclic code containing \(W_0\) contains the full-weight codeword
\((1,1,\ldots,1),\) and hence contains a full-weight codeword. Thus the only possible cyclic codes containing no full-weight codeword are
\[W_1,~W_2,~W_1\oplus W_2.\]

We next show that \(W_1\oplus W_2\) contains a full-weight codeword. Since \(W_1\oplus W_2\) corresponds to the union of the two nonzero cyclotomic cosets, it is exactly the cyclic code whose DFT support does not contain \(0\). Equivalently,
\[W_1\oplus W_2=\left\{\boldsymbol x=(x_0,\ldots,x_{n-1})\in\mathbb F_3^n: \sum_{i=0}^{n-1}x_i=0 \right\}.\]
We construct a full-weight vector in this hyperplane. Choose an integer \(r\) such that
\[2r\equiv n \pmod 3.\]
This is possible since \(2\) is invertible in \(\mathbb F_3\). Let \(\boldsymbol x\in\mathbb F_3^n\) have \(r\) coordinates equal to \(1\) and the remaining \(n-r\) coordinates equal to \(-1\). Then \(\boldsymbol x\) has full weight and
\[\sum_{i=0}^{n-1}x_i=r-(n-r)=2r-n \equiv 0\pmod 3.\]
Hence \(x\in W_1\oplus W_2\). Thus \(W_1\oplus W_2\) contains a full-weight codeword.

Finally, since \(|\mathcal N(n,3)|\ge 3\), both \(W_1\) and \(W_2\) must contain no full-weight codeword, which gives \(|\mathcal N(n,3)| = 3\).
\end{proof}

\begin{exmp}\label{Exmp3.9}
When \(n=11\), we have \(\operatorname{ord}_{11}(3)=5\). Therefore, by Corollary \ref{coro3.3}, we obtain \(|\mathcal{N}(11,3)|=3\).
\end{exmp}

The lower bound in Theorem \ref{thm3.4} is not necessarily sharp. When \(n=13\), enumerating all cyclic codes gives \(|\mathcal{N}(13,3)|=11\).
This indicates that the lower bound in Theorem \ref{thm3.4} is not tight, and we propose the following conjecture.

\begin{conj}\label{conj3.1}
Let \(q=3\), let \(n>3\) be a prime, and assume that \(\mathrm{ord}_n(3)\) is odd. Then
\[|\mathcal{N}(n,3)| = \sum_{j=0}^{\frac r2}\binom{r}{j},\]
where \(r = \frac{n-1}{\mathrm{ord}_n(3)}.\)
\end{conj}

Let \(f=\operatorname{ord}_n(3)\), and assume that \(f\) is odd.
Since \(n>3\) is prime, the total number of nonzero $3$-cyclotomic cosets is
\(r=\frac{n-1}{f}.\)
Denote these cosets by \(C_1,C_2,\dots,C_r.\) Let the DFT support of the cyclic code $\mathcal C$ satisfy
\(\operatorname{supp}(\widehat{\mathcal{C}})=\bigcup_{s=1}^t C_{i_s}.\)
We state the following conjecture.
\begin{conj}\label{conj3.2}
Let the DFT support of the cyclic code $\mathcal C$ satisfy
\(\operatorname{supp}(\widehat{\mathcal C})=\bigcup_{s=1}^t C_{i_s}.\) Then $\mathcal C\in\mathcal{N}(n,3)\) if and only if \(0\notin \operatorname{supp}(\widehat{\mathcal C})\) and \(t\le \dfrac{r}{2}\).
\end{conj}

If Conjecture \ref{conj3.2} is true, then Conjecture \ref{conj3.1} is also true. Assume Conjecture~\ref{conj3.2} holds. Then a cyclic code belongs to \(\mathcal N(n,3)\) precisely when its DFT support is the union of \(j\) nonzero \(3\)-cyclotomic cosets with \(0\le j\le \frac r2\). Since there are \(\binom rj\) ways to choose such \(j\) cosets among the \(r\) nonzero cosets, we obtain
\[|\mathcal N(n,3)|=\sum_{j=0}^{\frac r2}\binom rj,\]
which is exactly Conjecture~\ref{conj3.1}.

In Theorem \ref{thm3.4}, we constructed cyclic codes over \(\mathbb F_3^n\) containing no full-weight codeword using the condition \(S\cap(-S)=\emptyset\). Furthermore, we have the following result.

\begin{thm}\label{thm3.5}
Let \(n\geq 2\) be an integer satisfying \(\gcd(n,3)=1\), and let \(S\) be a union of \(3\)-cyclotomic cosets. Let
\[\mathcal C_S=\{\boldsymbol x\in\mathbb F_3^n:\operatorname{supp}(\widehat{x})\subseteq S\}.\]
Assume that for every nonempty union \(T\subseteq S\) of \(3\)-cyclotomic cosets, at least one of the following two conditions holds:

(1) \(T\cap(-T)=\emptyset\);

(2) there exists some \(\ell\in [n]\setminus\{0\}\) such that the equation
\[a+b\equiv \ell \pmod n,\qquad a,b\in T,\]
has a unique solution up to interchanging \(a\) and \(b\).

Then \(\mathcal C_S\) contains no full-weight codeword. In particular, if \(S\ne\emptyset\), then \(\mathcal C_S\in \mathcal N(n,3)\) is nonzero.
\end{thm}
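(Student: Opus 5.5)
The plan is to argue by contradiction, combining the fact that squaring is trivial on \(\mathbb F_3^*\) with the convolution formula of Lemma \ref{lem3-7}. Suppose \(\mathcal C_S\) contains a full-weight codeword \(\boldsymbol c\). Put \(T=\operatorname{supp}(\widehat{\boldsymbol c})\). Since \(\boldsymbol c\neq\boldsymbol 0\) and the DFT is invertible, \(T\) is nonempty. By Lemma \ref{lem3-5}, \(T\) is a union of \(3\)-cyclotomic cosets, and by the definition of \(\mathcal C_S\) we have \(T\subseteq S\). So the hypothesis applies to \(T\), and at least one of conditions (1), (2) holds for it.

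The key observation is this. Every coordinate \(c_j\) lies in \(\mathbb F_3^*=\{\pm1\}\), so \(\boldsymbol c\odot\boldsymbol c=\boldsymbol 1=(1,\ldots,1)\). The DFT of \(\boldsymbol 1\) is easy to compute: \(\widehat{\boldsymbol 1}_0=n\), which is nonzero in \(\mathbb F_3\) because \(\gcd(n,3)=1\), and \(\widehat{\boldsymbol 1}_\ell=\sum_j\omega^{j\ell}=0\) for \(\ell\neq0\). On the other side, Lemma \ref{lem3-7} gives, for every \(\ell\in[n]\),
\[\widehat{\boldsymbol 1}_\ell=\frac1n\sum_{\substack{a,b\in T\\ a+b\equiv\ell \pmod n}}\widehat{\boldsymbol c}_a\widehat{\boldsymbol c}_b .\]
Here the sum may be restricted to \(a,b\in T\), since \(\widehat{\boldsymbol c}\) vanishes outside \(T\).

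The proof then splits according to which condition holds. If (1) holds, take \(\ell=0\). The pairs with \(a+b\equiv0\) and \(a,b\in T\) would need \(b=-a\in T\cap(-T)=\emptyset\), so the sum is empty and equals \(0\). This contradicts \(\widehat{\boldsymbol 1}_0=n\neq0\); it is the same argument as in Theorem \ref{thm3.4}, via Lemma \ref{lem3-3}. If (2) holds, take the given \(\ell\neq0\), so the left side \(\widehat{\boldsymbol 1}_\ell\) is \(0\). Since the solution \(\{a,b\}\) is unique up to interchanging \(a\) and \(b\), the ordered sum has one of two forms:
\begin{itemize}
\item if \(a\neq b\), it equals \(2\widehat{\boldsymbol c}_a\widehat{\boldsymbol c}_b\);
\item if \(a=b\), it equals \(\widehat{\boldsymbol c}_a^{\,2}\).
\end{itemize}
In both cases the sum is nonzero, because \(a,b\in T\) makes \(\widehat{\boldsymbol c}_a,\widehat{\boldsymbol c}_b\) nonzero, \(2\neq0\) in characteristic \(3\), and \(\mathbb F_{3^m}\) has no zero divisors. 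This contradicts \(\widehat{\boldsymbol 1}_\ell=0\).

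Hence \(\mathcal C_S\) has no full-weight codeword. If \(S\neq\emptyset\), then \(\mathcal C_S\) is nonzero by Lemmas \ref{lem3-4} and \ref{lem3-6}, so \(\mathcal C_S\in\mathcal N(n,3)\). The only delicate step is the bookkeeping in case (2): the formula of Lemma \ref{lem3-7} sums over ordered pairs, so an unordered solution with \(a\neq b\) contributes twice, and that factor \(2\) must not vanish. It does not vanish precisely because we work over \(\mathbb F_3\) rather than \(\mathbb F_2\).
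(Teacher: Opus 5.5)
Your proposal is correct and follows the paper's proof essentially step for step. Both use the identity $\boldsymbol c\odot\boldsymbol c=(1,\ldots,1)$ together with the convolution formula of Lemma~\ref{lem3-7}: at $\ell=0$ for the orthogonality obstruction, and at the distinguished $\ell\neq 0$ for the unique representation obstruction, where the factor $2$ survives in characteristic $3$. Your extra remarks, that $T$ is a union of cosets by Lemma~\ref{lem3-5} and that $\mathcal C_S\neq 0$ by Lemmas~\ref{lem3-4} and~\ref{lem3-6}, only make explicit details the paper leaves implicit.
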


\begin{proof}
Suppose, for contradiction, that \(\mathcal C_S\) contains a full-weight codeword \(\boldsymbol x=(x_0,\dots,x_{n-1})\). Since \(\boldsymbol x\in\mathbb F_3^n\) is full-weight, we have
\(x_i\in\mathbb F_3^*=\{\pm1\}\) for every \(i\). Hence
\[x_i^2=1,\qquad i=0,\dots,n-1,\]
or equivalently,
\[\boldsymbol x\odot \boldsymbol x=(1,1,\dots,1),\]
where \(\boldsymbol x\odot \boldsymbol x\) denotes the coordinatewise square of \(\boldsymbol x\). Let \(T=\operatorname{supp}(\widehat{\boldsymbol x}).\)
Since \(\boldsymbol x\in \mathcal C_S\), we have \(T\subseteq S\). Also \(T\neq\emptyset\), since \(\boldsymbol x\neq0\).
Taking the DFT of \(\boldsymbol x\odot \boldsymbol x=(1,\dots,1)\), we obtain
\[\widehat{(\boldsymbol x\odot \boldsymbol x)}_0=n,\qquad \widehat{(\boldsymbol x\odot \boldsymbol x)}_\ell=0\quad(\ell\neq0).\]
On the other hand, by Lemma \ref{lem3-7}, we get
\[\widehat{(\boldsymbol x\odot \boldsymbol x)}_\ell=\frac1n\sum_{a+b=\ell}\widehat{\boldsymbol x}_a\widehat{\boldsymbol x}_b.\]
Therefore
$$\sum_{a+b=\ell} \widehat{\boldsymbol x}_a \widehat{\boldsymbol x}_b =
\begin{cases}
n^2, & \ell = 0, \\
0, & \ell \neq 0.
\end{cases}$$

First suppose that \(T\cap(-T)=\emptyset.\) Then there are no \(a,b\in T\) satisfying \(a+b=0\). Hence
\[\sum_{a+b=0}\widehat{\boldsymbol x}_a\widehat{\boldsymbol x}_b=0.\]
Thus \(\widehat{(\boldsymbol x\odot \boldsymbol x)}_0=0,\) contradicting \(\widehat{(\boldsymbol x\odot \boldsymbol x)}_0=n\). Since \(\gcd(n,3)=1\), the image of \(n\) in \(\mathbb F_3\) is nonzero.

Next suppose that there exists some \(\ell\neq0\) such that \(\ell\) has a unique unordered representation
\[\ell=a+b,\qquad a,b\in T.\]
Then the convolution sum
\[\sum_{u+v=\ell}\widehat{\boldsymbol x}_u\widehat{\boldsymbol x}_v\]
has only the contribution coming from this unique unordered pair.

If \(a=b\), then
\[\sum_{u+v=\ell}\widehat{\boldsymbol x}_u\widehat{\boldsymbol x}_v=\widehat{\boldsymbol x}_a^2\neq0,\]
because \(a\in T\) implies \(\widehat{\boldsymbol x}_a\neq0\).

If \(a\neq b\), then the two ordered pairs \((a,b)\) and \((b,a)\) occur, so
\[\sum_{u+v=\ell}\widehat{\boldsymbol x}_u\widehat{\boldsymbol x}_v=\widehat{\boldsymbol x}_a\widehat{\boldsymbol x}_b+\widehat{\boldsymbol x}_b\widehat{\boldsymbol x}_a=2\widehat{\boldsymbol x}_a\widehat{\boldsymbol x}_b\neq0.\]
Indeed, the computation takes place in a field of characteristic \(3\), so \(2\neq0\), and \(\widehat{\boldsymbol x}_a,\widehat{\boldsymbol x}_b\neq0\).

In either case,
\[\sum_{u+v=\ell}\widehat{\boldsymbol x}_u\widehat{\boldsymbol x}_v\neq0,\]
which contradicts
\[\widehat{(\boldsymbol x\odot \boldsymbol x)}_\ell=\frac1n\sum_{u+v=\ell}\widehat{\boldsymbol x}_u\widehat{\boldsymbol x}_v=0.\]
Thus no full-weight codeword can exist in \(\mathcal C_S\). Therefore \(\mathcal C_S\) contains no full-weight codeword whenever \(S\ne\emptyset\).
\end{proof}
We refer to the first condition in Theorem \ref{thm3.5} as the \textit{orthogonality obstruction}, and the second condition as the \textit{unique representation obstruction}.

\begin{exmp}\label{exmp3.9}
In Theorem \ref{thm3.5}, we use the orthogonality obstruction to give a lower bound for \(|\mathcal N(n,q)|\), and prove that this bound is sharp when \(\operatorname{ord}_n(3)=\dfrac{n-1}{2}\) and \(\dfrac{n-1}{2}\) is odd, as for \(n=11\). However, this is not the case for \(n=13\). In the following, we improve this bound for \(n=13\) by using the unique representation obstruction.

The nonzero $3$-cyclotomic cosets modulo $13$ are
\[C_1 = \{1, 3, 9\},\quad C_2 = \{2, 5, 6\},\quad -C_1 = \{4, 10, 12\},\quad -C_2 = \{7, 8, 11\}.\]
The orthogonality obstruction gives \(3^2= 9\) cyclic codes containing no full-weight codeword in total, including the zero code. However, the unique representation obstruction can additionally rule out two more:
\[ C_1 \cup (-C_1), \quad C_2 \cup (-C_2). \]
Specifically, for
\[ T_1 = C_1 \cup (-C_1) = \{1, 3, 4, 9, 10, 12\}, \]
we have
\[ 1 = 4 + 10 \pmod{13}, \]
and this is the unique unordered representation in \(T_1\). Hence \(\mathcal C_{T_1}\) contains no full-weight codeword.

For
\[ T_2 = C_2 \cup (-C_2) = \{2, 5, 6, 7, 8, 11\}, \]
we have
\[ 2 = 7 + 8 \pmod{13}, \]
and this is the unique unordered representation. Hence this \(\mathcal C_{T_2}\) also contains no full-weight codeword.

So the unique representation obstruction raises the lower bound from $9$ to $11$. In fact, by enumerating all cases, we obtain the exact value \(|\mathcal N(13, 3)| = 11.\)
\end{exmp}

\begin{exmp}\label{exmp3.10} Using Theorem \ref{thm3.5}, we can obtain the lower bound \(|\mathcal N(8,3)|\ge 3\). A direct enumeration shows that this lower bound is sharp, namely \(|\mathcal N(8,3)|=3\). The nonzero $3$-cyclotomic cosets modulo $8$ are  
\[C_1 = \{1, 3\},\quad C_2 = \{2, 6\},\quad C_3 = \{4\},\quad C_4 = \{5, 7\}.\]  
We have \(-C_1 = C_4, -C_4 = C_1,\) while \(-C_2 = C_2, -C_3 = C_3.\)
Let the corresponding minimal cyclic codes be denoted by  
\[W_0, W_1, W_2, W_3, W_4.\]  
Then  
\[\mathbb{F}_3^8 = W_0 \oplus W_1 \oplus W_2 \oplus W_3 \oplus W_4.\]  
Since  
\[ C_1 \cap (-C_1) = C_1 \cap C_4 = \emptyset, \]  
we have that \(W_1\) contains no full-weight codeword. Similarly,  
\[ C_4 \cap (-C_4) = C_4 \cap C_1 = \emptyset, \]  
so \(W_4\) also contains no full-weight codeword.
\end{exmp}

\begin{exmp}\label{exmp3.11}
Using Theorem \ref{thm3.5}, we can also obtain a sharp lower bound for
\(|\mathcal N(16,3)|\). 
The nonzero \(3\)-cyclotomic cosets modulo \(16\) are
\[C_1=\{1,3,9,11\},\quad C_2=\{2,6\},\quad C_3=\{4,12\},\]
\[C_4=\{5,7,13,15\},\quad C_5=\{8\},\quad C_6=\{10,14\}.\]
We have \(-C_1=C_4, -C_4=C_1,\) and \(-C_2=C_6, -C_6=C_2,\) while \(-C_3=C_3, -C_5=C_5.\)
Let the corresponding minimal cyclic codes be denoted by
\[W_0,~W_1,~W_2,~W_3,~W_4,~W_5,~W_6.\]
Then
\[\mathbb F_3^{16}=W_0\oplus W_1\oplus W_2\oplus W_3\oplus W_4\oplus W_5\oplus W_6.\]

By Theorem \ref{thm3.5}, any cyclic code whose DFT support \(S\) satisfies
\(S\cap(-S)=\emptyset\)
contains no full-weight codeword. Hence we may choose at most one coset from each of the two opposite pairs
\[\{C_1,C_4\},\qquad \{C_2,C_6\},\]
and we cannot choose the self-negative cosets \(C_3\) and \(C_5\). Therefore the following eight nonzero cyclic codes contain no full-weight codeword:
\[W_1,\quad W_4,\quad W_2,\quad W_6,\quad W_1\oplus W_2,\quad W_1\oplus W_6,\quad W_4\oplus W_2,\quad W_4\oplus W_6.\]
Together with the zero code, this gives
\(|\mathcal N(16,3)|\ge 9.\)
A direct enumeration of all cyclic codes of length \(16\) over \(\mathbb F_3\) shows that no other cyclic code contains no full-weight codeword. Hence
\(|\mathcal N(16,3)|=9.\)
\end{exmp}

When \(n>3\) is prime and \(\operatorname{ord}_n(3)\) is even, the orthogonality obstruction fails, whereas the unique representation obstruction can still apply.

\begin{exmp}\label{exmp3.12}
Let \(n=41\). Then $\operatorname{ord}_{41}(3)=8,$ and in fact \(3^4\equiv -1 \pmod {41}.\)
Hence \(-1\in \langle 3\rangle\), so every nonzero \(3\)-cyclotomic coset modulo \(41\) is invariant under negation. Therefore the orthogonality obstruction cannot be applied to any nonzero minimal cyclic code.

The nonzero \(3\)-cyclotomic cosets modulo \(41\) are
\[C_1=\{1,3,9,27,40,38,32,14\},\]
\[C_2=\{2,6,18,13,39,35,23,28\},\]
\[C_3=\{4,12,36,26,37,29,5,15\},\]
\[C_4=\{7,21,22,25,34,20,19,16\},\]
\[C_5=\{8,24,31,11,33,17,10,30\}.\]
Since \(3^4\equiv -1\pmod {41}\), we have \(-C_i=C_i, 1\le i\le 5.\)
Thus \(C_i\cap(-C_i)=C_i\neq\emptyset,\) so the orthogonality obstruction fails for each \(C_i\).
However, the unique representation obstruction applies. Indeed, in \(C_1\), we have
\[4=1+3\pmod {41},\]
and this is the unique unordered representation of \(4\) as a sum of two elements of \(C_1\).
Hence the minimal cyclic code \(W_1\) corresponding to \(C_1\) contains no full-weight codeword by Theorem \ref{thm3.5}.

Similarly, we have the following unique unordered representations:
\[8=2+6\pmod {41}\quad \text{in }C_2,\qquad 1=37+5\pmod {41}\quad \text{in }C_3,\]
\[2=21+22\pmod {41}\quad \text{in }C_4,\qquad 2=33+10\pmod {41}\quad \text{in }C_5.\]
Therefore the corresponding minimal cyclic codes
\[W_1,\ W_2,\ W_3,\ W_4,\ W_5\]
all contain no full-weight codeword. Hence
\(|\mathcal N(41,3)|\ge 6.\)
This example shows that the unique representation obstruction can produce cyclic codes
containing no full-weight codeword even in cases where the orthogonality obstruction is completely ineffective.
\end{exmp}
From Example \ref{exmp3.12}, we obtain \(h_3(41)>0\). In fact, this is not an isolated phenomenon, and we may further generalize this result. We first generalize Lemma \ref{lem3-7} accordingly.

\begin{lem}\label{lem4-1}
Assume that \(\gcd(n,q)=1\). Let \(\boldsymbol x^{\odot t}\) denote the \(t\)-fold
coordinatewise product of \(\boldsymbol x\) with itself. Then for every integer \(t\ge 2\)
and every \(\ell\in[n]\), we have
\[\widehat{\boldsymbol x^{\odot t}}_\ell=\frac{1}{n^{t-1}}\sum_{a_1+\cdots+a_t\equiv \ell \pmod n} \widehat{\boldsymbol x}_{a_1}\widehat{\boldsymbol x}_{a_2}\cdots \widehat{\boldsymbol x}_{a_t}.\]
\end{lem}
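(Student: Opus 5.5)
We argue by induction on \(t\), with Lemma \ref{lem3-7} as the base case \(t=2\). We could also expand directly via the inverse DFT, as in the proof of Lemma \ref{lem3-7}; the induction is cleaner.

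For the inductive step, assume the formula holds for \(t-1\ge 2\) (or use Lemma \ref{lem3-7} when \(t-1=1\) is interpreted trivially). Write \(\boldsymbol x^{\odot t}=\boldsymbol x^{\odot (t-1)}\odot \boldsymbol x\). Both factors lie in \(\mathbb F_q^n\), so Lemma \ref{lem3-7} applies with \(\boldsymbol y=\boldsymbol x\) and gives
\[\widehat{\boldsymbol x^{\odot t}}_\ell=\frac1n\sum_{c+a_t\equiv \ell \pmod n}\widehat{\boldsymbol x^{\odot (t-1)}}_{c}\,\widehat{\boldsymbol x}_{a_t}.\]
Substitute the inductive hypothesis
\[\widehat{\boldsymbol x^{\odot (t-1)}}_c=\frac{1}{n^{t-2}}\sum_{a_1+\cdots+a_{t-1}\equiv c\pmod n}\widehat{\boldsymbol x}_{a_1}\cdots\widehat{\boldsymbol x}_{a_{t-1}}.\]
Then merge the double sum over \(c\in[n]\) and over \((a_1,\dots,a_{t-1})\) with \(a_1+\cdots+a_{t-1}\equiv c\). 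Since \(c\) is determined by the \(a_i\), this is a single sum over all tuples \((a_1,\dots,a_t)\in[n]^t\) with \(a_1+\cdots+a_t\equiv \ell\pmod n\). The prefactors multiply to \(\frac{1}{n}\cdot\frac{1}{n^{t-2}}=\frac{1}{n^{t-1}}\), which is the claimed formula.

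The only point needing care is the reindexing of the double sum: each tuple \((a_1,\dots,a_t)\) must appear exactly once, which holds because \(c\) is uniquely determined modulo \(n\). We also need \(n\) invertible in \(\mathbb F_q\), which is guaranteed by \(\gcd(n,q)=1\). There is no real obstacle; the argument is routine bookkeeping.
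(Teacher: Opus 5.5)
Your proposal is correct and follows essentially the same route as the paper: induction on \(t\) with Lemma \ref{lem3-7} as the base case, splitting \(\boldsymbol x^{\odot t}=\boldsymbol x^{\odot(t-1)}\odot\boldsymbol x\), applying the twofold convolution formula, and merging the sums because the intermediate index \(c\) is determined by \(a_1,\dots,a_{t-1}\). The parenthetical about the case \(t-1=1\) is unnecessary, since the base case \(t=2\) already handles it.
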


\begin{proof}
We prove the formula by induction on \(t\). For $t=2$, this is exactly the result given in Lemma \ref{lem3-7}. Assume that the formula holds for some \(t\ge 2\). Since \(\boldsymbol x^{\odot(t+1)}=\boldsymbol x^{\odot t}\odot \boldsymbol x,\) the twofold convolution formula gives
\[\widehat{\boldsymbol x^{\odot(t+1)}}_\ell=\frac{1}{n}\sum_{b+c\equiv \ell \pmod n}\widehat{\boldsymbol x^{\odot t}}_b\widehat{\boldsymbol x}_c.\]
By the induction hypothesis,
\[\widehat{\boldsymbol x^{\odot t}}_b=\frac{1}{n^{t-1}}\sum_{a_1+\cdots+a_t\equiv b \pmod n}\widehat{\boldsymbol x}_{a_1}\cdots\widehat{\boldsymbol x}_{a_t}.\]
Substituting this into the previous identity, we get
\[\widehat{\boldsymbol x^{\odot(t+1)}}_\ell=\frac{1}{n^t}\sum_{b+c\equiv \ell} \sum_{a_1+\cdots+a_t\equiv b}\widehat{\boldsymbol x}_{a_1}\cdots\widehat{\boldsymbol x}_{a_t}\widehat{\boldsymbol x}_c.\]
The conditions
\[a_1+\cdots+a_t\equiv b,\qquad b+c\equiv \ell\]
are equivalent to
\[a_1+\cdots+a_t+c\equiv \ell \pmod n.\]
Renaming \(c\) as \(a_{t+1}\), we obtain
\[\widehat{\boldsymbol x^{\odot(t+1)}}_\ell=\frac{1}{n^t}\sum_{a_1+\cdots+a_{t+1}\equiv \ell \pmod n}\widehat{\boldsymbol x}_{a_1}\cdots\widehat{\boldsymbol x}_{a_{t+1}}.\]
This proves the induction step. Hence the formula holds for all \(t\ge2\).
\end{proof}

\begin{thm}\label{thm4-4}
Let \(q\ge 3\) be an odd prime, let \(n\ge 2\) be an integer satisfying \(\gcd(n,q)=1\), and let \(S\) be a union of \(q\)-cyclotomic cosets modulo \(n\). Let
\[\mathcal C_S=\{\boldsymbol x\in\mathbb F_q^n : \operatorname{supp}(\widehat{x})\subseteq S\}.\]
For a subset \(T\subseteq \mathbb Z/n\mathbb Z\), write
\[(q-1)T=\{a_1+\cdots+a_{q-1}:a_i\in T\}.\]
Assume that for every nonempty union \(T\subseteq S\) of \(q\)-cyclotomic cosets, at least one of the following two conditions holds:

(1) \(0\notin (q-1)T\);

(2) there exists some \(\ell\in [n]\setminus\{0\}\) such that the congruence
\[a_1+\cdots+a_{q-1}\equiv \ell \pmod n,\qquad a_i\in T,\]
has a unique solution up to permutation of \(a_1,\ldots,a_{q-1}\).

Then \(\mathcal C_S\) contains no full-weight codeword. In particular, if \(S\ne\emptyset\), then \(\mathcal C_S\in\mathcal N(n,q)\) is nonzero.
\end{thm}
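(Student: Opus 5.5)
We argue by contradiction, following the proof of Theorem \ref{thm3.5} with Lemma \ref{lem4-1} (for \(t=q-1\)) in place of Lemma \ref{lem3-7}. Suppose \(\mathcal C_S\) contains a full-weight codeword \(\boldsymbol x\). Every coordinate then lies in \(\mathbb F_q^*\), so Fermat's little theorem gives \(x_i^{q-1}=1\) for all \(i\), that is,
\[\boldsymbol x^{\odot(q-1)}=(1,1,\dots,1).\]
Put \(T=\operatorname{supp}(\widehat{\boldsymbol x})\). By Lemma \ref{lem3-5}, \(T\) is a union of \(q\)-cyclotomic cosets. Also \(T\subseteq S\) because \(\boldsymbol x\in\mathcal C_S\), and \(T\neq\emptyset\) because \(\boldsymbol x\neq\boldsymbol 0\). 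The DFT of the all-one vector is \(n\) at index \(0\) and \(0\) elsewhere. Combining this with Lemma \ref{lem4-1} for \(t=q-1\ge 2\), and noting that \(n\) is invertible in \(\mathbb F_q\), we get
\[\Sigma_\ell:=\sum_{\substack{a_1+\cdots+a_{q-1}\equiv \ell\\ a_i\in[n]}}\widehat{\boldsymbol x}_{a_1}\cdots\widehat{\boldsymbol x}_{a_{q-1}}=\begin{cases} n^{q-1}\neq 0, & \ell=0,\\ 0,&\ell\ne 0.\end{cases}\]
Only tuples with every \(a_i\in T\) contribute to \(\Sigma_\ell\).

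If condition (1) holds for \(T\), there is no tuple in \(T^{q-1}\) with sum \(0\). Then \(\Sigma_0=0\), contradicting \(\Sigma_0=n^{q-1}\neq 0\), since \(\gcd(n,q)=1\).

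If condition (2) holds, fix \(\ell\ne 0\) whose representation as a sum of \(q-1\) elements of \(T\) is unique up to order. Let \(b_1,\dots,b_k\) be the distinct values in that representation, with multiplicities \(m_1,\dots,m_k\), so \(\sum m_j=q-1\). The ordered tuples contributing to \(\Sigma_\ell\) are exactly the rearrangements of this multiset, and each gives the same product. Hence
\[\Sigma_\ell=\frac{(q-1)!}{m_1!\cdots m_k!}\,\widehat{\boldsymbol x}_{b_1}^{m_1}\cdots\widehat{\boldsymbol x}_{b_k}^{m_k}.\]

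The main step is to show this multinomial coefficient is nonzero in \(\mathbb F_q\). Here we use that \(q\) is prime rather than a prime power: the integer \((q-1)!/(m_1!\cdots m_k!)\) divides \((q-1)!\), which is not divisible by the prime \(q\). So it is a nonzero element of \(\mathbb F_q\). Each \(\widehat{\boldsymbol x}_{b_j}\) is nonzero because \(b_j\in T\). Therefore \(\Sigma_\ell\ne 0\), contradicting \(\Sigma_\ell=0\).

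In either case we reach a contradiction, so \(\mathcal C_S\) has no full-weight codeword. If \(S\neq\emptyset\), then \(\mathcal C_S\) is nonzero by Lemma \ref{lem3-6}, so \(\mathcal C_S\in\mathcal N(n,q)\).
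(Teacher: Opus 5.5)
Your proof is correct and follows the paper's argument essentially step for step. You reduce a full-weight codeword to $\boldsymbol x^{\odot(q-1)}=(1,\dots,1)$, apply Lemma~\ref{lem4-1} with $t=q-1$, and get a contradiction at $\ell=0$ under condition (1), or at the distinguished $\ell\neq0$ under condition (2) because the multinomial coefficient does not vanish. Your remark that this coefficient divides $(q-1)!$ and so is prime to $q$ is a slightly cleaner version of the paper's justification, and your citations of Lemmas~\ref{lem3-5} and~\ref{lem3-6} supply references for two points the paper states without one.
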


\begin{proof}
Suppose, for contradiction, that \(\mathcal C_S\) contains a full-weight codeword
\(x=(x_0,\ldots,x_{n-1})\). Since \(x_i\in\mathbb F_q^*\) for every \(i\), we have
\[x_i^{q-1}=1,\qquad i=0,\ldots,n-1.\]
Equivalently,
\[\boldsymbol x^{\odot(q-1)}=(1,1,\ldots,1),\]
where \(\boldsymbol x^{\odot(q-1)}\) denotes the \((q-1)\)-fold coordinatewise product of \(\boldsymbol x\) with itself.

Let \(T=\operatorname{supp}(\widehat{\boldsymbol x}).\) Since \(\boldsymbol x\in\mathcal C_S\), we have \(T\subseteq S\). Moreover, \(T\) is a nonempty union of \(q\)-cyclotomic cosets. Taking the DFT of \(\boldsymbol x^{\odot(q-1)}=(1,1,\ldots,1),\) we obtain
\[\widehat{\boldsymbol x^{\odot(q-1)}}_0=n,\qquad\widehat{\boldsymbol x^{\odot(q-1)}}_\ell=0\quad(\ell\ne0).\]
Here \(n\ne0\) in \(\mathbb F_q\), since \(\gcd(n,q)=1\). By Lemma \ref{lem4-1}, for every \(\ell\in[n]\) we have
\[\widehat{\boldsymbol x^{\odot(q-1)}}_\ell=\frac{1}{n^{q-2}}\sum_{a_1+\cdots+a_{q-1}\equiv \ell}\widehat{\boldsymbol x}_{a_1}\widehat{\boldsymbol x}_{a_2}\cdots\widehat{\boldsymbol x}_{a_{q-1}}.\]
Since \(\widehat{\boldsymbol x}_a\ne0\) precisely for \(a\in T\), the above sum only receives nonzero contributions from \((q-1)\)-tuples in \(T\).

First suppose that \(0\notin(q-1)T.\) Then there are no \(a_1,\ldots,a_{q-1}\in T\) satisfying
\[a_1+\cdots+a_{q-1}\equiv0\pmod n.\]
Hence the convolution sum at \(0\) is zero, and so \(\widehat{\boldsymbol x^{\odot(q-1)}}_0=0,\) contradicting \(\widehat{\boldsymbol x^{\odot(q-1)}}_0=n\ne0.\)

Next suppose that there exists some \(\ell\ne0\) such that
\[a_1+\cdots+a_{q-1}\equiv \ell\pmod n,\qquad a_i\in T,\]
has a unique solution up to permutation. Write this unique unordered solution as
\[\underbrace{u_1,\ldots,u_1}_{m_1\ \mathrm{times}},\underbrace{u_2,\ldots,u_2}_{m_2\ \mathrm{times}},\ldots,\underbrace{u_s,\ldots,u_s}_{m_s\ \mathrm{times}},\]
where \(u_1,\ldots,u_s\in T\) are distinct and \(m_1+\cdots+m_s=q-1.\) Then the corresponding ordered solutions contribute
\[\frac{(q-1)!}{m_1!m_2!\cdots m_s!}\widehat{\boldsymbol x}_{u_1}^{m_1}\widehat{\boldsymbol x}_{u_2}^{m_2}\cdots\widehat{\boldsymbol x}_{u_s}^{m_s}\]
to the convolution sum at \(\ell\). Since \(q\) is prime and all integers \(m_i\) and \(q-1\) are strictly smaller than \(q\), the multinomial coefficient
\[\frac{(q-1)!}{m_1!m_2!\cdots m_s!}\]
is nonzero in \(\mathbb F_q\). Also \(\widehat{\boldsymbol x}_{u_i}\ne0\) for every \(i\), because \(u_i\in T=\operatorname{supp}(\widehat{\boldsymbol x})\). Therefore
\[\sum_{a_1+\cdots+a_{q-1}\equiv \ell}\widehat{\boldsymbol x}_{a_1}\cdots\widehat{\boldsymbol x}_{a_{q-1}}\ne0.\]
It follows that \(\widehat{\boldsymbol x^{\odot(q-1)}}_\ell\ne0,\) contradicting the fact that \(\widehat{\boldsymbol x^{\odot(q-1)}}_\ell=0, \ell\ne0.\)

Hence \(\mathcal C_S\) contains no full-weight codeword. Finally, if \(S\ne\emptyset\), then \(\mathcal C_S\) is a nonzero cyclic code, since \(S\) is a nonempty union of \(q\)-cyclotomic cosets. Therefore \(\mathcal C_S\in\mathcal N(n,q).\)
\end{proof}

\begin{thm}\label{thm3.6}
Let \(q\ge 3\) be an odd prime, and let \(m\ge 4\) be an integer. Set \(n=\frac{q^m+1}{2}.\) Then \(h_q(n)>0.\)
\end{thm}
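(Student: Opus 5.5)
The plan is to apply Theorem~\ref{thm4-4} to a single minimal cyclic code, exactly as in Example~\ref{exmp3.12}, where the orthogonality obstruction is unavailable but the unique representation obstruction works. First, $\gcd(n,q)=1$, since $2n=q^m+1\equiv 1\pmod q$. Moreover $q^m=2n-1\equiv -1\pmod n$, so $q^{2m}\equiv 1$, and $q^j\not\equiv \pm1$ for $0<j<m$ because $q^j<n$. Hence $\operatorname{ord}_n(q)=2m$, and the cyclotomic coset
\[C_1=\{\pm 1,\pm q,\ldots,\pm q^{m-1}\}\]
has exactly $2m$ elements and satisfies $-C_1=C_1$. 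Thus condition (1) fails for $T=C_1$. Take $S=C_1$; the only nonempty union $T\subseteq S$ of cosets is $T=C_1$. It therefore suffices to exhibit some $\ell\not\equiv 0\pmod n$ having a unique representation, up to permutation, as a sum of $q-1$ elements of $C_1$. Then $\mathcal C_{C_1}$ is a nonzero code in $\mathcal N(n,q)$, and Theorem~\ref{lem1} gives $h_q(n)>0$.

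The key reformulation is to encode a sum $a_1+\cdots+a_{q-1}$ with $a_i\in C_1$ as a digit vector $(d_0,\ldots,d_{m-1})\in\mathbb Z^m$ with $\sum_i |d_i|\le q-1$; its value is $\sum_i d_i q^i \bmod n$. Because $q^m\equiv-1$, this is a negacyclic base-$q$ system: a carry out of position $m-1$ re-enters position $0$ with a sign change. Two facts drive the choice of $\ell$. First, padding by cancelling pairs $q^i+(-q^i)$ is allowed in a sum of $q-1$ terms. This kills uniqueness whenever $\ell$ has a representation with fewer than $q-1$ terms, since $q-1$ is even and such a representation can be padded up. Second, choices such as $\ell=q-1$ also fail, because $q-1=q+(-1)$.

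I would therefore take
\[\ell=\tfrac{q-1}{2}\,(1+q^2),\]
given by digit vector $d_0=d_2=\tfrac{q-1}{2}$ and all other digits $0$. The digits are balanced, i.e.\ of absolute value at most $\tfrac{q-1}{2}$, and the nonzero positions are non-adjacent cyclically. Indeed, positions $0$ and $2$ have cyclic gaps $2$ and $m-2\ge 2$, and this is exactly where $m\ge 4$ enters. Also $0<\ell<n$, so $\ell\not\equiv0\pmod n$.

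The main step, and the main obstacle, is uniqueness. Any representation of $\ell$ gives a digit vector $\mathbf e$ with $\|\mathbf e\|_1\le q-1$ and $\sum_i(e_i-d_i)q^i\equiv 0\pmod n$. I would show that the only vector of $\ell_1$-weight at most $q-1$ in the class of $\mathbf d$ is $\mathbf d$ itself. The intended argument is a negacyclic analogue of the uniqueness of the minimal-weight (nonadjacent) balanced base-$q$ form. Since $|\sum_i e_iq^i|\le (q-1)q^{m-1}<2n$, the integer value of $\mathbf e$ lies in $\{\ell,\ \ell-n,\ \ell+n\}$. I would then treat the three cases by reading digits from position $0$ upward, using carries. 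In each case one checks that any deviation from $\mathbf d$ forces a carry chain: either $|e_0|\ge q-\tfrac{q-1}{2}=\tfrac{q+1}{2}$, or a wraparound costs at least $2$ extra weight. Either way the total weight is pushed above $q-1$.

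The case $\ell\pm n$ is where the gap condition $m-2\ge2$ must be used. In that case the wraparound carry would otherwise land next to a nonzero digit and could merge with it. If this carry analysis turns out to be messy, a fallback is to adjust the spacing, for example placing the second block at position $\lfloor m/2\rfloor$, while keeping the same weight-counting strategy.
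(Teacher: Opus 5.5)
Your proposal is correct and follows essentially the paper's route. Like the paper, you apply Theorem~\ref{thm4-4} to $S=C_1$ through the unique-representation obstruction, lift a representation to an integer sum $S$ with $|S|\le (q-1)q^{m-1}$, and run a digit-by-digit signed $q$-adic budget argument in the cases $S=\ell,\ \ell\pm n$. The only real difference is the witness. The paper uses $\ell=r(1+q)$ with $r=\frac{q-1}{2}$, whose two nonzero digits are adjacent. Your $\ell=r(1+q^2)$ also works: for $m\ge 4$, neither $n-\ell=1+rq+r(q^3+\cdots+q^{m-1})$ nor $\ell+n=q\bigl((r+1)+(q-1)q+r(q^2+\cdots+q^{m-2})\bigr)$ has a signed expansion of budget at most $2r$, and $\ell$ itself has only the expansion $r+rq^2$.

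When you write it out, two points need attention. First, the bound $|S|<2n$ alone leaves $S=\ell-2n$ open, so you must exclude it separately via $2n-\ell>(q-1)q^{m-1}$. Second, non-adjacency of the digits is not what the argument needs, as the paper's adjacent choice shows; what $m\ge4$ buys in both versions is that $n\pm\ell$ retain enough digits equal to $r$ to exceed the budget.
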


\begin{proof}
Write \(q=2r+1\), where \(r=\frac{q-1}{2}\). Since \(2n=q^m+1,\) we have \(q^m\equiv -1\pmod n\), and hence \(q^{2m}\equiv1\pmod n\). Also \(q\nmid n\), since \(2n=q^m+1\equiv 1\pmod q.\)

We first prove that \(\operatorname{ord}_n(q)=2m\). For \(1\le d<m\), we have
\[q^d-1<n,\qquad q^d+1<n,\]
and hence \(q^d\not\equiv \pm1\pmod n\). Also \(q^m\equiv -1\not\equiv1\pmod n\). If \(m<d<2m\), write \(d=m+s\) with \(0<s<m\). If \(q^d\equiv1\pmod n\), then
\(q^{m+s}\equiv1\pmod n.\)
Since \(q^m\equiv -1\pmod n\), this gives \(q^s\equiv -1\pmod n\), contradicting the preceding observation. Therefore no positive exponent smaller than \(2m\) gives \(1\), and so \(\operatorname{ord}_n(q)=2m.\) Thus the \(q\)-cyclotomic coset of \(1\) modulo \(n\) is
\[C_1=\{1,q,\ldots,q^{m-1},-1,-q,\ldots,-q^{m-1}\}.\]
Let \(W_1\) be the minimal cyclic code corresponding to \(C_1\). We prove that \(W_1\) contains no full-weight codeword.

Put \(\ell=r(1+q).\) Since \(m\ge4\), we have \(0<\ell<n\). We first show that \(\ell\) has a unique unordered representation as a sum of \(q-1=2r\) elements of \(C_1\), namely
\[\ell=\underbrace{1+\cdots+1}_{r\ \mathrm{times}}+\underbrace{q+\cdots+q}_{r\ \mathrm{times}}.\]

Suppose
\[a_1+\cdots+a_{2r}\equiv \ell \pmod n,\qquad a_i\in C_1.\]
Choose representatives
\[a_i=\varepsilon_i q^{b_i},\qquad \varepsilon_i\in\{\pm1\},\quad 0\le b_i\le m-1,\]
and set \(S=a_1+\cdots+a_{2r}.\) Then
\[|S|\le 2rq^{m-1}=(q-1)q^{m-1}<q^m<2n.\]
Since \(S\equiv\ell\pmod n\), we may write \(S=\ell+kn\) for some \(k\in\mathbb Z\). The bound \(|S|<2n\) leaves only \(k\in\{-2,-1,0,1\}\). Moreover,
\[2n-\ell-(q-1)q^{m-1}=q^{m-1}+1-\frac{q^2-1}{2}>0,\]
because \(q\ge3\) and \(m\ge4\). Hence
\[|S|\le (q-1)q^{m-1}<2n-\ell,\]
so \(S=\ell-2n\) is impossible. Therefore the congruence \(S\equiv\ell\pmod n\) implies
\[S=\ell,\qquad S=\ell+n,\qquad\text{or}\qquad S=\ell-n.\]

We shall use the following elementary observation. A signed \(q\)-adic expression means an expression
\[M=\sum_i c_iq^i,\qquad c_i\in\mathbb Z,\]
and its coefficient budget is \(\sum_i|c_i|\). Reducing modulo \(q\) forces \(c_0\equiv M\pmod q\). Once \(c_0\) is fixed, the number \((M-c_0)/q\) has a signed \(q\)-adic expression with budget decreased by \(|c_0|\).

We discuss the three cases. First suppose \(S=\ell=r+rq.\) Write
\[S=\sum_{i=0}^{m-1}c_iq^i,\qquad \sum_i|c_i|\le 2r.\]
Modulo \(q\), we have \(c_0\equiv r\pmod q\). Since \(|c_0|\le 2r=q-1\), either
\[c_0=r\qquad\text{or}\qquad c_0=r-q=-(r+1).\]
If \(c_0=-(r+1)\), then after subtracting \(c_0\) and dividing by \(q\), the remaining part represents \(r+1\) with total absolute coefficient at most
\[2r-(r+1)=r-1,\]
which is impossible modulo \(q\). Hence \(c_0=r\). Then after subtracting \(r\) and dividing by \(q\), the remaining part represents \(r\) with total absolute coefficient at most \(r\). Therefore necessarily \(c_1=r\), and all other \(c_i\)'s are zero. Since the total absolute coefficient is already \(2r\), no cancellation is possible. Hence the representation is uniquely
\(\ell=r\cdot 1+r\cdot q.\)

Next suppose \(S=\ell+n.\) Since
\[n=1+r(1+q+\cdots+q^{m-1}),\]
we have
\[\ell+n=q^2\bigl((r+1)+rq+rq^2+\cdots+rq^{m-3}\bigr).\]
Let
\[A=(r+1)+rq+rq^2+\cdots+rq^{m-3}.\]
Since \(S=q^2A\), any signed \(q\)-adic expression for \(S\) with budget at most \(2r=q-1\) must have its constant and \(q\)-coefficients equal to zero. Hence \(A\) would admit a signed \(q\)-adic expression with total absolute coefficient at most \(2r\).

We show that this is impossible. Reducing such an expression for \(A\) modulo \(q\), the constant coefficient must be congruent to \(r+1\pmod q\). Since its absolute value is at most \(2r=q-1\), it must be either
\[r+1\qquad\text{or}\qquad (r+1)-q=-r.\]
If it is \(r+1\), then after subtracting it and dividing by \(q\), the remaining quotient is congruent to \(r\pmod q\), but the remaining coefficient budget is at most
\[2r-(r+1)=r-1,\]
which is impossible. If it is \(-r\), then after subtracting it and dividing by \(q\), the quotient is again of the same form, but with coefficient budget at most \(r\). Reducing once more modulo \(q\), the constant coefficient is forced to be \(-r\), which uses up the whole remaining budget, while the remaining number is still nonzero. This contradiction excludes \(S=\ell+n\).

Finally suppose \(S=\ell-n.\) Equivalently, \(n-\ell=-S\) would admit a signed \(q\)-adic expression with total absolute coefficient at most \(2r\). But
\[n-\ell=1+rq^2+rq^3+\cdots+rq^{m-1}.\]
Reducing modulo \(q\), the constant coefficient must be either
\[1 \qquad\text{or}\qquad 1-q=-2r.\]
The second possibility already uses all \(2r\) coefficients but leaves a nonzero remainder, impossible. Hence the constant coefficient is \(1\). After subtracting \(1\), the remaining number is divisible by \(q^2\). Since the remaining budget is at most \(2r-1<q\), the coefficient of \(q\) must be zero. Therefore, after dividing by \(q^2\), we obtain a signed expression for
\[r(1+q+\cdots+q^{m-3})\]
with total absolute coefficient at most \(2r-1\).

Since \(m\ge4\), the above sum has at least two terms. Reducing modulo \(q\), the constant coefficient must be either
\[r \qquad\text{or}\qquad r-q=-(r+1).\]
If it is \(r\), then after subtracting it and dividing by \(q\), the remaining quotient is nonzero and still congruent to \(r\pmod q\), while the remaining budget is at most \(r-1\), impossible. If it is \(-(r+1)\), then for \(r=1\) this already exceeds the available budget; for \(r\ge2\), after subtracting it and dividing by \(q\), the remaining budget is at most \(r-2\), while the quotient is congruent to \(r+1\pmod q\), again impossible. Thus \(S=\ell-n\) is also excluded.

Therefore \(\ell\) has the unique unordered representation
\[\ell=\underbrace{1+\cdots+1}_{r}+\underbrace{q+\cdots+q}_{r}\]
as a sum of \(q-1=2r\) elements of \(C_1\).

Since \(C_1\) is a single \(q\)-cyclotomic coset, the only nonempty union of \(q\)-cyclotomic cosets contained in \(C_1\) is \(C_1\) itself. Hence the above unique representation verifies condition \((2)\) of Theorem \ref{thm4-4} for \(S=C_1\). Therefore \(W_1\) is a nonzero cyclic code containing no full-weight codeword. By Theorem \ref{lem1}, we conclude that \(h_q(n)>0.\)
\end{proof}

\begin{rem}\label{rem4-1}
For example, when \(q=3\) and \(m=4\), we have \(n=\frac{3^4+1}{2}=41,\) which is a prime number. In addition, for every fixed odd prime \(q\), the family in Theorem \ref{thm3.6} contains infinitely many composite integers. Indeed, let \(m\ge 4\) be an integer which is not a power of \(2\). Write
\[m=2^s u,\qquad u>1 \text{ odd}.\]
Then \(q^m+1=\bigl(q^{2^s}\bigr)^u+1\) is divisible by \(q^{2^s}+1\). Hence
\[\frac{q^m+1}{2}=\frac{q^{2^s}+1}{2}\left(\bigl(q^{2^s}\bigr)^{u-1}-\bigl(q^{2^s}\bigr)^{u-2}+\cdots-q^{2^s}+1\right).\]
Both factors on the right-hand side are integers greater than \(1\). Therefore \(\frac{q^m+1}{2}\) is composite whenever \(m\ge 4\) is not a power of \(2\).

Consequently, for every fixed odd prime \(q\), Theorem \ref{thm3.6} gives an explicit infinite family of integers \(n=\frac{q^m+1}{2} ~(m\ge4)\) such that \(h_q(n)>0\), and this family contains infinitely many composite values of \(n\).
\end{rem}

\section{Algebraic Consequences of the Cyclic-Code Criterion}
In 2025, Li and Yuan \cite{Li-Yuan} used group algebra theory to show that characterizing \(h_q(n) = 0\) can be reduced to the coprime case \(\gcd(q, n) = 1\). More precisely, they proved that for every positive integer $k$, \(h_q(np^k)=0\) is equivalent to \(h_q(n)=0\). We now present a new proof by virtue of Theorem \ref{lem1}.

For convenience of notation, we say that the polynomial \(\boldsymbol{\alpha}(X)\) is full-weight in \(R_n\) whenever \(\boldsymbol{\alpha}\) is a full-weight codeword in \(\mathbb{F}_q^n\).
\begin{thm}\label{lem2}
Let \( q \) be a power of a prime $p$, and let \( n \) be a positive integer satisfying \( \gcd(p, n) = 1 \). Then for any positive integer \( k \), we have \( h_q(np^k) = 0 \) if and only if \( h_q(n) = 0 \).
\end{thm}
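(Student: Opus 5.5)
The plan is to prove both implications through Theorem~\ref{lem1}, working entirely with ideals of $R_N=\mathbb F_q[X]/(X^N-1)$, where $N=np^k$. The only tool needed is the \emph{repetition map}. Put
\[S(X)=\frac{X^N-1}{X^n-1}=1+X^n+X^{2n}+\cdots+X^{n(p^k-1)}.\]
Since the characteristic is $p$, we have $X^N-1=(X^n-1)^{p^k}$, and therefore $S(X)=(X^n-1)^{p^k-1}$. Define
\[\rho:R_n\to R_N,\qquad v(X)\mapsto v(X)S(X).\]
This is well defined, because $(X^n-1)S(X)=X^N-1$. Concretely, if $v$ has degree $<n$, then $\rho(v)$ is the coefficient vector of $v$ written out $p^k$ times in a row. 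Hence $\rho(v)$ has full weight $N$ if and only if $v$ has full weight $n$. Moreover, $\rho$ commutes with multiplication by $X$, so $\rho$ maps cyclic codes of length $n$ to cyclic codes of length $N$.

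\emph{Step 1: $h_q(np^k)=0\Rightarrow h_q(n)=0$.} I argue by contraposition. Suppose $\mathcal C\subseteq R_n$ is a nonzero cyclic code with no full-weight codeword. Then $\rho(\mathcal C)$ is a nonzero cyclic code in $R_N$, since $\rho$ is injective. It also has no full-weight codeword, by the weight property of $\rho$ above. Theorem~\ref{lem1} then gives $h_q(N)>0$.

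\emph{Step 2: $h_q(n)=0\Rightarrow h_q(np^k)=0$.} Let $\mathcal C=\langle g(X)\rangle\subseteq R_N$ be a nonzero cyclic code. Factor $X^n-1=\prod_{i=1}^r f_i$ into distinct irreducibles, which is possible since $\gcd(n,p)=1$. Then $g=\prod f_i^{s_i}$ with $0\le s_i\le p^k$, and not all $s_i$ equal $p^k$ because $\mathcal C\neq 0$. Set
\[g_0(X)=\prod_{s_i=p^k} f_i(X).\]
This is a proper divisor of $X^n-1$, so $\mathcal D=\langle g_0\rangle\subseteq R_n$ is a nonzero cyclic code. By $h_q(n)=0$ and Theorem~\ref{lem1}, $\mathcal D$ contains a full-weight element $v=ag_0$. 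By the weight property of $\rho$, the element $\rho(v)=a\,g_0\,(X^n-1)^{p^k-1}$ has full weight in $R_N$.

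It remains to check that $\rho(v)\in\mathcal C$. Since $g\mid X^N-1$, this is equivalent to the polynomial divisibility $g\mid g_0\,(X^n-1)^{p^k-1}$. Compare the exponent of each $f_i$:
\begin{itemize}
\item if $s_i=p^k$, the right-hand side contains $f_i^{1+(p^k-1)}=f_i^{p^k}$;
\item if $s_i<p^k$, it contains $f_i^{p^k-1}$, and $p^k-1\ge s_i$.
\end{itemize}
So the divisibility holds, $\mathcal C$ contains a full-weight codeword, and Theorem~\ref{lem1} gives $h_q(N)=0$.

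The main point requiring care is Step 2. A first attempt via the Frobenius, $v(X)^{p^k}=v^{\sigma}(X^{p^k})$, produces a vector supported only on multiples of $p^k$. That route fails when every $s_i\ge 1$. The identity $S(X)=(X^n-1)^{p^k-1}$ in characteristic $p$ avoids this problem. With it, the required divisibility reduces to the exponent comparison above, which only needs that some $s_i<p^k$, i.e.\ that $\mathcal C\neq 0$.
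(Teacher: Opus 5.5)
Your proof is correct and follows the paper's argument. Step 2 is exactly the paper's construction: $J=\{j: e_j=p^k\}$, $d(X)=\prod_{j\in J}f_j(X)$, the full-weight $F(X)^{p^k-1}\boldsymbol c(X)$, and the same exponent comparison. Step 1 is the contrapositive of the paper's reverse direction; working with the image $\rho(\langle d(X)\rangle)=\langle (X^n-1)^{p^k-1}d(X)\rangle$ directly spares you the paper's reduction of $b(X)$ modulo $F(X)$. One small wording fix: $g\mid g_0(X^n-1)^{p^k-1}$ is sufficient for $\rho(v)\in\mathcal C$, not equivalent to it, but sufficiency is all you use.
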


\begin{proof}
By virtue of Theorem \ref{lem1}, it suffices to prove that every nonzero cyclic code over \(R_n\) contains a full-weight codeword if and only if every nonzero cyclic code over \(R_{p^kn}\) contains a full-weight codeword.

First, we have
\[X^{p^kn} - 1 = (X^n)^{p^k}- 1 = (X^n - 1)^{p^k}.\]
Let \(F(X) = X^n - 1\). Since \(\gcd(n,p) = 1\), \(F(X)\) is square-free. Let
\(F(X) = f_1(X)f_2(X)\cdots f_r(X)\)
be the irreducible factorization of \(F(X)\) in \(\mathbb{F}_q[X]\). It follows that
\[X^{p^kn} - 1 = F(X)^{p^k} = f_1(X)^{p^k}\cdots f_r(X)^{p^k}.\]
Hence every cyclic code over \(R_{p^kn}\) is of the form
\[\left\langle \prod_{j=1}^r f_j(X)^{e_j} \right\rangle,\quad 0 \leq e_j \leq p^k,\]
whereas all cyclic codes over \(R_n\) take the shape
\[\left\langle \prod_{j\in J} f_j(X) \right\rangle,\quad J \subseteq \{1,\dots,r\}.\]

On the other hand, we have
\[F(X)^{p^k-1} = (X^n - 1)^{p^k-1} = 1 + X^n + X^{2n} + \dots + X^{(p^k-1)n}.\]
Thus if
\[\boldsymbol{c}(X) = c_0 + c_1 X + \dots + c_{n-1} X^{n-1} \in R_n,\]
then
\[F(X)^{p^k-1}\boldsymbol{c}(X) = \boldsymbol{c}(X) + X^n \boldsymbol{c}(X) + \dots + X^{(p^k-1)n}\boldsymbol{c}(X) \in R_{p^kn}.\]
This corresponds to repeating the length-$n$ vector \(\boldsymbol{c}\) exactly $p^k$ times. Hence, we have
\[\boldsymbol{c}(X) \text{ is full-weight in } R_n \iff F(X)^{p^k-1}\boldsymbol{c}(X) \text{ is full-weight in } R_{p^kn}.\]

Suppose
\(h_q(n) = 0.\)
Equivalently, every nonzero cyclic code over \(R_n\) contains a full-weight codeword. Now take an arbitrary nonzero cyclic code \(\mathcal C = \langle g(X) \rangle\) over \(R_{p^kn}\).
Since \(g(X) \mid F(X)^{p^k}\), we may write
\[g(X) = \prod_{j=1}^r f_j(X)^{e_j},\quad 0 \leq e_j \leq p^k.\]
As \(\mathcal C \neq \{0\}\), we have
\(g(X) \neq F(X)^{p^k},\)
which means that not all \(e_j\) equal $p^k$. Let
\[J = \{j : e_j = p^k\},\]
and define
\[d(X) = \prod_{j\in J} f_j(X).\]
If $J$ is the empty set, we define \(d(X)=1\) by convention.
Since not all \(e_j = p^k\), it follows that
\(d(X) \neq F(X).\)
Then
\(\mathcal D = \langle d(X) \rangle \subseteq R_n\)
is a nonzero cyclic code. By the assumption \(h_q(n) = 0\), $\mathcal D$ admits a full-weight codeword. That is, there exists some
\(\boldsymbol{c}(X) \in \langle d(X) \rangle \subseteq R_n\)
such that \(\boldsymbol{c}(X)\) is full-weight. Now consider
\[\boldsymbol{A}(X) = F(X)^{p^k-1}\boldsymbol{c}(X) \in R_{p^kn}.\]
Then \(\boldsymbol{A}(X)\) is full-weight of length $p^kn$. We now verify that
\(\boldsymbol{A}(X) \in \mathcal C = \langle g(X) \rangle.\)
We examine the exponent of each irreducible factor \(f_j\): 

If \(j \in J\), then \(e_j = p^k\). Since \(\boldsymbol{c}(X) \in \langle d(X) \rangle\), we have \(f_j \mid \boldsymbol{c}(X)\). Thus the exponent of \(f_j\) in \(F(X)^{p^k-1}\boldsymbol{c}(X)\) is at least
\((p^k-1)+1 = p^k = e_j.\)

If \(j \notin J\), then \(e_j \leq p^k-1\). The term \(F(X)^{p^k-1}\) already contains the factor \(f_j^{p^k-1}\), which supplies a sufficiently high exponent.

Therefore
\(g(X) \mid F(X)^{p^k-1}\boldsymbol{c}(X),\)
and hence
\(\boldsymbol{A}(X) \in \langle g(X) \rangle = \mathcal C.\)
Thus $\mathcal C$ contains a full-weight codeword. As $\mathcal C$ was chosen to be an arbitrary nonzero cyclic code over \(R_{p^kn}\), every nonzero cyclic code over \(R_{p^kn}\) contains a full-weight codeword. Consequently,
\(h_q(p^kn) = 0.\)

Now suppose
\(h_q(p^kn) = 0.\)
Equivalently, every nonzero cyclic code over \(R_{p^kn}\) contains a full-weight codeword. Take an arbitrary nonzero cyclic code \(\mathcal D = \langle d(X) \rangle \subseteq R_n.\)
Since
\(F(X) = f_1(X)\cdots f_r(X),\)
we have
\[d(X) = \prod_{j\in J} f_j(X),\]
where \(J \subsetneq \{1,\dots,r\}\). Now consider the cyclic code over \(R_{p^kn}\) defined by
\[\mathcal C = \langle F(X)^{p^k-1}d(X) \rangle.\]
This code is nonzero, because at least one irreducible factor \(f_j\) appears with exponent exactly \(p^k-1\) in \(F(X)^{p^k-1}d(X)\), which is strictly less than $p^k$. By the assumption \(h_q(p^kn) = 0\), $\mathcal C$ admits a full-weight codeword, denoted
\(\boldsymbol{A}(X) \in \mathcal C.\) As
\(\mathcal C = \langle F(X)^{p^k-1}d(X) \rangle,\)
there exists some \(b(X) \in R_{p^kn}\) such that
\[\boldsymbol{A}(X) = F(X)^{p^k-1}d(X)b(X).\]
Reduce \(b(X)\) modulo \(F(X)\) and write
\[b(X) = b_0(X) + F(X)b_1(X).\]
In \(R_{p^kn}\), we have the identity
\(F(X)^{p^k} = 0,\)
which implies
\[F(X)^{p^k-1}d(X)b(X) = F(X)^{p^k-1}d(X)b_0(X).\]
Set
\[\boldsymbol{c}(X) = d(X)b_0(X) \in R_n.\]
Then
\(\boldsymbol{c}(X) \in \mathcal D = \langle d(X) \rangle,\)
and
\(\boldsymbol{A}(X) = F(X)^{p^k-1}\boldsymbol{c}(X).\) We know that \(\boldsymbol{A}(X)\) is full-weight if and only if \(\boldsymbol{c}(X)\) is full-weight.
Thus $\mathcal D$ contains a full-weight codeword. Since $\mathcal D$ was chosen as an arbitrary nonzero cyclic code over \(R_n\), we conclude
\(h_q(n) = 0.\)
\end{proof}

Theorem \ref{lem1} reduces the condition \(h_q(n)=0\) to the problem of whether there exists a full-weight codeword in cyclic codes over \(\mathbb{F}_q^n\). For the case \(h_q(n)\geq k\), we obtain the analogous theorem below.

Let \(\boldsymbol{\alpha}_1,\boldsymbol{\alpha}_2,\dots,\boldsymbol{\alpha}_k\in \mathbb{F}_q^n\) be a set of linearly independent vectors. We can construct a linear subspace of codimension $k$:
\[V_{(\boldsymbol{\alpha}_1,...,\boldsymbol{\alpha}_k)} = V_{\boldsymbol{\alpha}_1}\cap V_{\boldsymbol{\alpha}_2}\cap \dots \cap V_{\boldsymbol{\alpha}_k},\]
where \(V_{\boldsymbol{\alpha}_j} = \{\boldsymbol{x}\in \mathbb{F}_q^n : (\boldsymbol{x},\boldsymbol{\alpha}_j) = 0\}\), $j=1,...,k$. For each \(\boldsymbol{\alpha}_j\), construct its corresponding reciprocal polynomial 
\[\boldsymbol{\alpha}_j^*(X)=\boldsymbol{\alpha}_j(X^{-1}) \pmod{X^n - 1}.\]
Taking the polynomial tuple \(\big(\boldsymbol{\alpha}_1^*(X),\boldsymbol{\alpha}_2^*(X),\dots,\boldsymbol{\alpha}_k^*(X)\big)\) as generators, we define the single-generator quasi-cyclic code:
\[\begin{split}
\mathcal C(\boldsymbol{\alpha}_1,\dots,\boldsymbol{\alpha}_k) &= \Big\{ \big(\boldsymbol{x}(X)\boldsymbol{\alpha}_1^*(X),\boldsymbol{x}(X)\boldsymbol{\alpha}_2^*(X),\,\dots,\,\boldsymbol{x}(X)\boldsymbol{\alpha}_k^*(X)\big) : \boldsymbol{x}(X)\in R_n \Big\}\\
&=\Big\{ \Big(\boldsymbol{x}(X)\cdot\big(\boldsymbol{\alpha}_1^*(X),\boldsymbol{\alpha}_2^*(X),\,\dots,\,\boldsymbol{\alpha}_k^*(X)\big)\Big) : \boldsymbol{x}(X)\in R_n \Big\}\subseteq R_n^k.
\end{split}\]
Every codeword \(\boldsymbol{c}(\boldsymbol{x})\in \mathcal C(\boldsymbol{\alpha}_1,\dots,\boldsymbol{\alpha}_k)\) is a $k$-dimensional polynomial vector, and each component of \(\boldsymbol{c}(\boldsymbol{x})\) belongs to \(R_n\).

For notational convenience, write
\[\boldsymbol{a}(\boldsymbol{x})_j = \theta^{-1}\big(\boldsymbol{x}(X)\boldsymbol{\alpha}_j^*(X)\big)\in \mathbb{F}_q^n.\]
For any codeword \(\boldsymbol{c}(\boldsymbol{x})\in \mathcal C(\boldsymbol{\alpha}_1,\dots,\boldsymbol{\alpha}_k)\), define the {\it $i$-th composite component} of \(\boldsymbol{c}(\boldsymbol{x})\) as
\[(\boldsymbol{c}(\boldsymbol{x}))_i=\big((\boldsymbol{a}(\boldsymbol{x})_1)_i,\,(\boldsymbol{a}(\boldsymbol{x})_2)_i,\,\dots,\,(\boldsymbol{a}(\boldsymbol{x})_k)_i\big)\in \mathbb{F}_q^k.\]
Note that \((\boldsymbol{a}(\boldsymbol{x})_j)_i\) is exactly the coefficient of \(X_i\) in \(\boldsymbol{x}(X)\boldsymbol{\alpha}_j^*(X)\). In addition, for convenience, we also denote \([X^i]\big(\boldsymbol{x}(X)\boldsymbol{\alpha}_j^*(X)\big)\) as the coefficient of \(X_i\) in \(\boldsymbol{x}(X)\boldsymbol{\alpha}_j^*(X)\), i.e.,
\((\boldsymbol{a}(\boldsymbol{x})_j)_i=[X^i]\big(\boldsymbol{x}(X)\boldsymbol{\alpha}_j^*(X)\big).\)

\begin{thm}\label{lem3}
Let \(k\ge 1\), and let \(\boldsymbol{\alpha}_1,\dots,\boldsymbol{\alpha}_k\in\mathbb F_q^n\) be linearly independent. Then \(V_{(\boldsymbol{\alpha}_1,\dots,\boldsymbol{\alpha}_k)}\) is a cyclically covering subspace of \(\mathbb{F}_q^n\) with codimension $k$ if and only if for any \(\boldsymbol{c}(\boldsymbol{x}) \in \mathcal C(\boldsymbol{\alpha}_1,\dots,\boldsymbol{\alpha}_k)\), there exists some \(i \in [n]\) such that \((\boldsymbol{c}(\boldsymbol{x}))_i = \boldsymbol{0}\).
\end{thm}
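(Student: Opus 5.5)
The plan is to mimic the proof of Theorem \ref{lem1}, replacing the single map $A_{\boldsymbol\alpha}$ by its $k$-fold version. There are three steps: compute the codimension, characterize when a shift $\tau^i(V)$ contains a given $\boldsymbol x$, and translate that characterization into the language of the quasi-cyclic code.

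\textbf{Codimension.} Since $\boldsymbol\alpha_1,\dots,\boldsymbol\alpha_k$ are linearly independent, the map $\boldsymbol x\mapsto((\boldsymbol x,\boldsymbol\alpha_1),\dots,(\boldsymbol x,\boldsymbol\alpha_k))$ is surjective onto $\mathbb F_q^k$. Its kernel is $V=V_{(\boldsymbol\alpha_1,\dots,\boldsymbol\alpha_k)}$, so $V$ has codimension exactly $k$. Hence the statement reduces to showing that $V$ is cyclically covering if and only if the stated zero-composite-component condition holds.

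\textbf{Membership in a shift.} As in Section 2, $\tau^i(V_{\boldsymbol\alpha_j})=V_{\tau^i\boldsymbol\alpha_j}$. Because $\tau^i$ is a bijection, it commutes with intersections, so
\[\tau^i(V)=\bigcap_{j=1}^k V_{\tau^i\boldsymbol\alpha_j}.\]
Therefore $\boldsymbol x\in\tau^i(V)$ if and only if $(\boldsymbol x,\tau^i\boldsymbol\alpha_j)=0$ for all $j=1,\dots,k$. It follows that $V$ is cyclically covering if and only if for every $\boldsymbol x\in\mathbb F_q^n$ there is an $i\in[n]$ with $(\boldsymbol x,\tau^i\boldsymbol\alpha_j)=0$ for all $j$.

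\textbf{Translation to the code.} This is the key step. The computation in the proof of Theorem \ref{lem1} gives
\[\boldsymbol x(X)\boldsymbol\alpha_j^*(X)=\sum_{i}(\boldsymbol x,\tau^i\boldsymbol\alpha_j)X^i,\]
so $(\boldsymbol a(\boldsymbol x)_j)_i=[X^i](\boldsymbol x(X)\boldsymbol\alpha_j^*(X))=(\boldsymbol x,\tau^i\boldsymbol\alpha_j)$. Consequently the $i$-th composite component is
\[(\boldsymbol c(\boldsymbol x))_i=\big((\boldsymbol x,\tau^i\boldsymbol\alpha_1),\dots,(\boldsymbol x,\tau^i\boldsymbol\alpha_k)\big).\]
It vanishes exactly when $\boldsymbol x\in\tau^i(V)$.

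\textbf{Conclusion.} Every codeword of $\mathcal C(\boldsymbol\alpha_1,\dots,\boldsymbol\alpha_k)$ is of the form $\boldsymbol c(\boldsymbol x)$ for some $\boldsymbol x(X)\in R_n$, and $\theta$ is a bijection $\mathbb F_q^n\to R_n$. So quantifying over all codewords is the same as quantifying over all $\boldsymbol x\in\mathbb F_q^n$. Both directions of the equivalence then follow at once: $V$ is covering if and only if every codeword has some zero composite component. The contrapositive matches the paper's earlier phrasing: $V$ fails to cover exactly when some codeword has all $n$ composite components nonzero.

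I expect no real obstacle. The only point needing care is bookkeeping of indices, namely checking that the coefficient of $X^i$ in $\boldsymbol x(X)\boldsymbol\alpha_j^*(X)$ is $(\boldsymbol x,\tau^i\boldsymbol\alpha_j)$ with the same shift convention for every $j$. This is settled by reusing the convolution identity $c_i=\sum_j x_j\alpha_{j-i}$ already established in the proof of Theorem \ref{lem1}.
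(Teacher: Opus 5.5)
Your proof is correct and follows the same route as the paper. Like the paper, you identify the $i$-th composite component of $\boldsymbol c(\boldsymbol x)$ with $\bigl((\boldsymbol x,\tau^i\boldsymbol\alpha_1),\ldots,(\boldsymbol x,\tau^i\boldsymbol\alpha_k)\bigr)$ via the convolution identity from Theorem \ref{lem1}, so it vanishes exactly when $\boldsymbol x\in\tau^i(V_{(\boldsymbol\alpha_1,\dots,\boldsymbol\alpha_k)})$; your explicit checks of the codimension and of $\tau^i$ commuting with the intersection only fill in details the paper leaves implicit.
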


\begin{proof}
For \(\boldsymbol x\in \mathbb F_q^n\), the \(i\)-th composite component of the associated codeword is
\[(\boldsymbol c(\boldsymbol x))_i=\bigl((\boldsymbol x,\tau^i(\boldsymbol{\alpha}_1)),\ldots,(\boldsymbol x,\tau^i(\boldsymbol{\alpha}_k)\bigr).\]
Thus \((\boldsymbol c(\boldsymbol x))_i=0\) if and only if
\[(\boldsymbol x,\tau^i(\boldsymbol \alpha_1))=\cdots=(\boldsymbol x,\tau^i(\boldsymbol \alpha_k))=0,\]
which is equivalent to \(\boldsymbol x\in \tau^i(V_{(\boldsymbol{\alpha}_1,\ldots,\boldsymbol{\alpha}_k)})\). Therefore every \(\boldsymbol x\in\mathbb F_q^n\) lies in some cyclic shift of \(V_{(\boldsymbol{\alpha}_1,\ldots,\boldsymbol{\alpha}_k)}\) if and only if every codeword in \(\mathcal C(\boldsymbol{\alpha}_1,\ldots,\boldsymbol{\alpha}_k)\) has a zero composite component.
\end{proof}
When \(k=1\), Theorem \ref{lem3} reduces to Theorem \ref{lem1}.
Based on Theorem \ref{lem3}, we give the definition of an admissible subspace in \(R_n\).
\begin{defn}\label{def4.1}
Let \(A = \operatorname{span}_{\mathbb{F}_q}\{\boldsymbol{\alpha}_1^*(X),\boldsymbol{\alpha}_2^*(X),\dots,\boldsymbol{\alpha}_k^*(X)\} \subseteq R_n\). We call the subspace A an admissible subspace of \(R_n\) if for every \(\boldsymbol{c}(\boldsymbol{x}) \in \mathcal C(\boldsymbol{\alpha}_1,\dots,\boldsymbol{\alpha}_k)\), there exists an index \(i \in [n]\) satisfying \((\boldsymbol{c}(\boldsymbol{x}))_i = \boldsymbol{0}\).
\end{defn}
Equivalently, \(A\) is admissible if for every \(\boldsymbol x(X)\in R_n\), there exists \(i\in[n]\) such that \([X^i]\bigl(\boldsymbol x(X)u(X)\bigr)=0\) for all \(u(X)\in A\). Hence admissibility depends only on the subspace \(A\), not on the chosen basis. By Definition \ref{def4.1}, we have
\[h_q(n) = \max\big\{\dim_{\mathbb{F}_q} A : A \subseteq R_n \text{ is admissible}\big\}.\]

In 2019, Aaronson, Groenland and Johnston \cite{Aaronson-Groenland-Johnston} proved that
\(h_q(pn) \leq p h_q(n).\)
By Theorem \ref{lem3}, we now provide an algebraic proof of this inequality.

\begin{thm}\label{thm4.1}
Let $q$ be a power of prime $p$ and $n$ a positive integer. Then we have
\[h_q(pn) \leq p h_q(n).\]
\end{thm}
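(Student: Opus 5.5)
The plan is to start from an admissible subspace of $R_{pN}$ with $N=n$, and to produce from it an admissible subspace of $R_n$ of dimension at least $1/p$ times as large. By the characterization $h_q(n)=\max\{\dim A: A\subseteq R_n \text{ admissible}\}$ following Definition \ref{def4.1}, this gives the inequality. The key structural fact is $X^{pn}-1=(X^n-1)^p$ in characteristic $p$. Put $F(X)=X^n-1$, and let $A\subseteq R_{pn}$ be admissible with $\dim A=k=h_q(pn)$.

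\textbf{Step 1 (filtration).} For $0\le j\le p$ set $A_j=A\cap F^jR_{pn}$. This gives a chain $A=A_0\supseteq A_1\supseteq\cdots\supseteq A_p=\{0\}$, the last equality because $F^p=0$ in $R_{pn}$. Hence
\[k=\sum_{j=0}^{p-1}\dim(A_j/A_{j+1}),\]
so some $j$ satisfies $\dim(A_j/A_{j+1})\ge k/p$. Fix this $j$.

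\textbf{Step 2 (descending to $R_n$).} For $u\in A_j$, write $u=F^jw$ with $w\in R_{pn}$. The element $w$ is determined modulo the annihilator of $F^j$, namely $F^{p-j}R_{pn}$. Since $p-j\ge1$, the residue $\bar w=w \bmod F\in R_n$ is well defined. The map $u\mapsto\bar w$ is $\mathbb F_q$-linear on $A_j$, and its kernel is exactly $A_{j+1}$, since $\bar w=0$ iff $u\in F^{j+1}R_{pn}$. Let $\bar B\subseteq R_n$ denote its image. Then $\dim\bar B=\dim(A_j/A_{j+1})\ge k/p$.

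\textbf{Step 3 (admissibility of $\bar B$).} This is the main point. Take any $y(X)\in R_n$, lift it to $R_{pn}$, and set $x=F^{p-1-j}y$. For $u=F^jw\in A_j$ we get $xu=F^{p-1}yw$. Because $F^{p}=0$, this depends only on $yw \bmod F=y\bar w\in R_n$. As in the proof of Theorem \ref{lem2}, $F^{p-1}=1+X^n+\cdots+X^{(p-1)n}$, so $F^{p-1}c$ is the $p$-fold repetition of $c\in R_n$. Therefore
\[[X^i](xu)=[X^{i\bmod n}](y\bar w)\quad\text{for every } i.\]
Admissibility of $A$ supplies an index $i\in[pn]$ with $[X^i](xu)=0$ for all $u\in A$, and in particular for all $u\in A_j$. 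Hence $i'=i\bmod n$ satisfies $[X^{i'}](yv)=0$ for all $v\in\bar B$. Since $y$ was arbitrary, $\bar B$ is admissible in $R_n$.

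\textbf{Conclusion.} Combining the steps, $h_q(n)\ge\dim\bar B\ge h_q(pn)/p$, which is the claim. The only delicate points are the well-definedness of $\bar w$ (needing $j\le p-1$) and the repetition identity. I expect Step 3's choice of the test vector $x=F^{p-1-j}y$ to be the main idea: it is what collapses every product $xu$ into the top filtration layer, where it becomes a periodic vector.
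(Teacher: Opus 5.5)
Your proof is correct and is essentially the paper's argument: the same $F$-adic filtration $A_t=A\cap F^tR_{pn}$, the same descent of $A_t/A_{t+1}$ to a subspace of $R_n$, and the same test vector $F^{p-1-t}c(X)$ combined with the repetition identity for $F^{p-1}=1+X^n+\cdots+X^{(p-1)n}$. The only difference is cosmetic. You use pigeonhole to pick a single layer of dimension at least $h_q(pn)/p$, whereas the paper shows every layer $B_t$ is admissible and sums the bounds $\dim B_t\le h_q(n)$.
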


\begin{proof}
Let \(F(X)=X^n-1\). First, we have
\[F(X)^p = (X^n - 1)^p  = X^{pn} - 1.\]
Thus the quotient ring \(R_{pn}\) can be written as
\[R_{pn} = \mathbb{F}_q[X] \big/ (X^{pn} - 1) = \mathbb{F}_q[X] \big/ (F(X)^p).\]
We construct a chain of subspaces of \(R_{pn}\) using powers of \(F(X)\), which yields the following $F$-adic filtration:
\[R_{pn} = F^0 R_{pn} \supset F^1 R_{pn} \supset F^2 R_{pn} \supset \cdots \supset F^{p-1} R_{pn} \supset F^p R_{pn} = 0,\]
where \(F^t R_{pn} = \big\{ F(X)^t \cdot f(X) : f(X) \in R_{pn} \big\}.\) For every \(t = 0,1,\dots,p-1\), the quotient space \(F^t R_{pn} \big/ F^{t+1} R_{pn}\) is isomorphic to \(R_n = \mathbb{F}_q[X]/(X^n-1)\). We define the linear map that realizes this isomorphism as follows:
\[\varphi_t: F^t R_{pn} \to R_n,\quad F(X)^t f(X) \mapsto f(X) \pmod{X^n-1}.\]
It is straightforward to verify that \(\ker \varphi_t = F^{t+1} R_{pn}\). Therefore, by the First Isomorphism Theorem for modules, we get
\[F^t R_{pn} \big/ F^{t+1} R_{pn} \cong R_n.\]
Let \(A \subseteq R_{pn}\) be an admissible subspace. We shall show that
\[\dim_{\mathbb{F}_q} A \leq ph_q(n).\]
We intersect $A$ with the filtration to obtain a chain of subspaces of $A$:
\[A_t = A \cap F^t R_{pn},\quad t=0,1,\dots,p.\]
It is clear that
\[A = A_0 \supseteq A_1 \supseteq A_2 \supseteq \cdots \supseteq A_p = 0.\]
By the dimension additivity formula for filtered vector spaces, we have
\[\dim_{\mathbb{F}_q} A = \sum_{t=0}^{p-1} \dim_{\mathbb{F}_q}\big( A_t / A_{t+1} \big).\]
We map each quotient \(A_t/A_{t+1}\) into \(R_n\) via the isomorphism constructed above, which gives a subspace \(B_t \subseteq R_n\). We write \(B_t\) explicitly as
\[B_t = \big\{ u(X) \in R_n : \exists\, a(X) \in A_t \text{ such that } a(X) = F(X)^t u(X) + F(X)^{t+1} v(X) \big\}.\]
It is immediate that \(\dim_{\mathbb{F}_q}\big(A_t / A_{t+1}\big) = \dim_{\mathbb{F}_q} B_t.\)
We then prove that every \(B_t\) is an admissible subspace of \(R_n\), so that \(\dim_{\mathbb{F}_q} B_t \le h_q(n)\). Summing these inequalities yields the required upper bound.

By the key identity
\[F(X)^{p-1} = (X^n - 1)^{p-1} = 1 + X^n + X^{2n} + \cdots + X^{(p-1)n},\]
for any \(c(X)\in R_n\), the coefficient of \(X^{\ell+rn}\) in \(F^{p-1}c(X)\) coincides with the coefficient of \(X^\ell\) in \(c(X)\). In other words,
\[[X^{\ell + r n}]\big( F^{p-1} c(X) \big) = [X^\ell]\big( c(X) \big),\]
where \(0\le \ell < n\) and \(0\le r < p\). 

To show that \(B_t\) is admissible, we need to verify that for any \(c(X) \in R_n\), there exists an index \(\ell \in [n]\) such that \([X^\ell]\big(c(X) \cdot u(X)\big) = 0\)
holds for all \(u(X) \in B_t\). Given \(c(X) \in R_n\), construct the polynomial
\[b(X) = F(X)^{p-1-t} \cdot c(X) \in R_{pn}.\]
Since $A$ is an admissible subspace of \(R_{pn}\) and \(A_t \subseteq A\), \(A_t\) is automatically admissible as well. Thus there exists some index \(i \in [pn]\) satisfying \([X^i]\big(b(X) \cdot a(X)\big) = 0\)
for all \(a(X) \in A_t\). We may write $i$ in the form
\[i = \ell + r n,\quad \text{where } 0 \le \ell < n,\ 0 \le r < p.\]
Take an arbitrary \(u(X) \in B_t\). By the definition of \(B_t\), there exists some \(a(X) \in A_t\) such that
\[a(X) = F(X)^t u(X) + F(X)^{t+1} v(X)\]
for some polynomial \(v(X)\). Compute the product \(b(X)a(X)\):
\[\begin{aligned}
b(X) \cdot a(X)
&= F^{p-1-t} c(X) \cdot \big( F^t u(X) + F^{t+1} v(X) \big) \\
&= F^{p-1} c(X) u(X) + F^{p} c(X) v(X) \\
&= F(X)^{p-1} \cdot c(X) u(X).
\end{aligned}\]
Recall the identity
\[[X^{\ell + r n}]\big( F^{p-1} c(X) u(X) \big) = [X^\ell]\big( c(X) u(X) \big).\]
Combined with
\[[X^i](b(X)a(X))=[X^{\ell + r n}](b(X)a(X))=0,\]
we obtain
\([X^\ell]\big( c(X) u(X) \big) = 0.\)
This \(\ell\) is fixed and valid for every \(u(X) \in B_t\), which proves that \(B_t\) is an admissible subspace of \(R_n\). Since \(h_q(n)\) denotes the maximal dimension of any admissible subspace of \(R_n\), we have
\[\dim_{\mathbb{F}_q} B_t \le h_q(n),\quad t = 0,1,\dots,p-1.\]
Using the dimension additivity property of the filtration, we obtain
\[\begin{aligned}
\dim_{\mathbb{F}_q} A&= \sum_{t=0}^{p-1} \dim_{\mathbb{F}_q}\big( A_t / A_{t+1} \big) = \sum_{t=0}^{p-1} \dim_{\mathbb{F}_q} B_t 
\le \sum_{t=0}^{p-1} h_q(n) = p \cdot h_q(n).
\end{aligned}\]
As $A$ is an arbitrary admissible subspace of \(R_{pn}\), the maximal dimension \(h_q(pn)\) of admissible subspaces of \(R_{pn}\) satisfies \(h_q(pn) \le ph_q(n).\)
\end{proof}

\begin{coro}\label{coro4.1}
Let \( q \) be a power of a prime $p$, and let \( n \) be a positive integer satisfying \( \gcd(p, n) = 1 \). Then for any positive integer \( k \), we have \(h_q(n)\leq h_q(np^k)\leq p^k h_q(n)\). In particular, \(h_q(np^k) = 0\) if and only if \(h_q(n) = 0\).
\end{coro}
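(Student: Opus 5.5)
The lower bound \(h_q(n)\le h_q(np^k)\) comes directly from Lemma \ref{lem1.1}(ii). Applied with the factors \(n\) and \(p^k\), it gives \(h_q(np^k)\ge \max\{h_q(n),h_q(p^k)\}\ge h_q(n)\). Alternatively, Lemma \ref{lem1.2}(i) gives \(h_q(np^k)\ge h_q(n)+h_q(p^k)\ge h_q(n)\), since \(h_q\) is nonnegative. Either route needs no new work.

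For the upper bound I will induct on \(k\), using Theorem \ref{thm4.1} at each step. The case \(k=0\) is trivial. For the step, apply Theorem \ref{thm4.1} with \(np^{k-1}\) in place of \(n\); note that Theorem \ref{thm4.1} places no coprimality assumption on its argument, which is exactly what the iteration requires. This gives
\[h_q(np^k)=h_q\bigl(p\cdot np^{k-1}\bigr)\le p\,h_q(np^{k-1})\le p\cdot p^{k-1}h_q(n)=p^k h_q(n).\]

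The "in particular" statement follows by squeezing. If \(h_q(n)=0\), the upper bound gives \(h_q(np^k)\le 0\), hence \(h_q(np^k)=0\). If \(h_q(np^k)=0\), the lower bound gives \(h_q(n)\le 0\), hence \(h_q(n)=0\). This recovers Theorem \ref{lem2} by a different route.

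The only step that needs care is confirming that Theorem \ref{thm4.1} really applies to \(np^{k-1}\), which is not coprime to \(p\) once \(k\ge2\). Its proof uses only \(X^{pN}-1=(X^N-1)^p\) and the identity \((X^N-1)^{p-1}=\sum_{r<p}X^{rN}\), both valid for every positive integer \(N\). So the iteration is legitimate, and there is no real obstacle. The hypothesis \(\gcd(p,n)=1\) is not needed for the inequalities themselves; it only matches the setting of Theorem \ref{lem2}.
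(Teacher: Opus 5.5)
Your proposal is correct and follows the argument the paper intends for this corollary, which it states without a written proof right after Theorem~\ref{thm4.1}: the lower bound comes from Lemma~\ref{lem1.1}(ii), the upper bound from iterating Theorem~\ref{thm4.1}, and the equivalence from squeezing. You are also right that Theorem~\ref{thm4.1}'s proof needs only $(X^N-1)^p=X^{pN}-1$ and $(X^N-1)^{p-1}=\sum_{r<p}X^{rN}$, so it applies to $np^{k-1}$, and that $\gcd(p,n)=1$ is not needed for either inequality.
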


\end{document}